\documentclass[11pt]{amsart}
\usepackage{amsmath,amssymb,color}
\usepackage[margin=2.7cm]{geometry}
\usepackage[hidelinks]{hyperref}
\usepackage{graphicx}
\usepackage{mathtools}
\usepackage{enumitem}

\allowdisplaybreaks

\usepackage{tikz}
\usepackage{pgfplots}
\pgfplotsset{compat=1.18}

\newcommand{\N}{{\mathbb N}}
\newcommand{\R}{{\mathbb R}}

\newcommand{\D}{\mathcal{D}}
\newcommand{\E}{\mathcal{E}}

\newcommand{\M}{\mathcal{M}}
\newcommand{\Pcal}{\mathcal{P}} 
\newcommand{\T}{\mathcal{T}}

\newcommand{\dd}{\mathrm{d}}

\newcommand{\Eint}{\E_{{\rm int}}}
\newcommand{\Eext}{\E_{{\rm ext}}}
\newcommand{\Eextz}{\E_{{\rm ext},0}}
\newcommand{\EKL}{\E_{KL}}
\newcommand{\EKpLp}{\E_{K'L'}}
\newcommand{\EKi}{\E_{K_{i-1}K_i}}

\newcommand{\nKs}{n_{K\sigma}}
\newcommand{\Bd}{|B_d|}
\newcommand{\Bdm}{|B_{d-1}|}
\newcommand{\diam}{\mathrm{diam}}
\newcommand{\cN}{\mathcal{N}}

\usepackage[dvipsnames]{xcolor}
\mathtoolsset{showonlyrefs}

\DeclareMathOperator{\argmin}{\mathrm{argmin}}

\title[Discrete Sobolev inequalities]{Discrete Sobolev and Trudinger inequalities\\ on polyhedral meshes}

\newtheorem{theorem}{Theorem}
\newtheorem{lemma}[theorem]{Lemma}
\newtheorem{proposition}[theorem]{Proposition}
\newtheorem{remark}[theorem]{Remark}

\author[M. Herda]{Maxime Herda}
\address{Univ. Lille, CNRS, Inria, UMR 8524--Laboratoire Paul Painlev\'e, 59000 Lille}
\email{maxime.herda@inria.fr}

\author[A. Trescases]{Ariane Trescases}
\address{Institut de Math\'ematiques de Toulouse; UMR5219 - Université de Toulouse ; CNRS - UPS, F-31062 Toulouse Cedex 9, France}
\email{ariane.trescases@math.univ-toulouse.fr}

\author[A. Zurek]{Antoine Zurek}
\address{Universit\'e de Technologie de Compi\`egne, LMAC, 60200 Compi\`egne, France}
\email{antoine.zurek@utc.fr}

\date{\today}

\begin{document}

\begin{abstract}
We revisit the derivation of discrete Sobolev and related inequalities on bounded domains for piecewise constant functions on polyhedral meshes. Our results improve the dependence of the Sobolev constant on the Lebesgue exponents and allow us to recover the asymptotics of the continuous case. As a consequence we derive a discrete version of the Trudinger inequality and variants. We extend our results to broken Sobolev spaces and discuss implications of these new or refined discrete functional inequalities in the numerical analysis of non-conforming numerical methods.

\bigskip
		
\noindent\textbf{Mathematics Subject Classification (2020):} 46E35, 46E39, 65N08, 26D10, 35J25.

\medskip
		
\noindent\textbf{Keywords:}  Discrete functional inequalities, Sobolev inequalities, Trudinger inequality, Poincar\'e inequality, Gagliardo--Nirenberg inequality, non-conforming approximations, polyhedral meshes.
\end{abstract}

\maketitle

\tableofcontents

\section{Introduction}
\subsection{Objectives}

This paper introduces a new method, inspired by~\cite{Trudinger67,GlitzkyGriepentrog10}, to establish Sobolev and related inequalities for piecewise constant and, more generally, piecewise $W^{1,p}$ functions on a polyhedral mesh of a polyhedral bounded domain $\Omega \subset \R^d$ ($d \geq 2$). This type of function arises naturally in the study of finite volume schemes and higher-order non-conforming numerical methods. In contrast with prior works, one of the main novelties of this paper is to derive the expected asymptotic dependence of the constants with respect to Lebesgue exponents. This leads to the first derivation of a discrete counterpart of the Trudinger inequality. 

Let us recall that, for a bounded domain $\Omega$ satisfying a cone condition, see Section~\ref{subsec: domain}, one has~\cite{GilbargTrudinger2001,AdamsFournier2003} that
\begin{align}\label{Sob ineg continuous}
\|u\|_{L^q(\Omega)} \leq C(p,q) \|\nabla u\|_{L^p(\Omega)}, \quad \forall u \in W^{1,p}_0(\Omega),
\end{align}
where the constant $C(p,q)$ is finite in the following cases and behaves asymptotically as 
\begin{align}\label{Constant Sobolev asymptotic}
C(p,q) = \left\{
    \begin{array}{ll}
         O_{p\to d^+}\left((p-d)^{-\left(1-1/d\right)}\right) & \mbox{if }p\in (d,\infty], \, \, q \in [p,\infty], \\
         O_{q\to \infty}\left(q^{1-1/d}\right) & \mbox{if }p=d, \, \, q \in [p,\infty), \\
         O_{p\to d^-}\left((d-p)^{-\left(1-1/d\right)}\right) & \mbox{if }p \in [1,d), \, \, q \in [p,dp/(d-p)].
    \end{array}
\right.
\end{align}
In the super-critical case $p>d$, also referred to as Morrey's inequality, the asymptotic behavior of $C(p,q)$ can be found in~\cite{ErcolePereira2020}. In the critical case $p=d$ as $q$ approaches $\infty$, this behavior is a consequence of the Trudinger inequality~\cite{Trudinger67, Moser1971}. Indeed, while $W^{1,d}_0$ functions might not be essentially bounded, Trudinger inequality implies that for small enough $\alpha>0$ there exists $C(\Omega) > 0$ such that 
\begin{equation}\label{eq:trudinger}
\sup_{\substack{u\in W^{1,d}_0(\Omega)\\\|\nabla u\|_{L^d(\Omega)}\leq 1}}\int_\Omega \exp(\alpha |u|^{\frac{d}{d-1}})\dd x \leq C(\Omega).
\end{equation}
Finally, the asymptotic behavior of $C(p,q)$ in the sub-critical case $p<d$ can be found explicitly in~\cite{Aubin76,Talenti76}. For the sake of brevity, we wrote the inequalities above for functions vanishing on $\partial\Omega$, but their counterparts without conditions on the boundary are equally essential (Sobolev embeddings, Poincaré--Sobolev inequalities). We emphasize that in this paper we establish discrete Sobolev inequalities for general boundary values.

Sobolev inequalities, and the fine dependence of Sobolev constants on the Lebesgue exponents, are crucial tools for the theoretical analysis of PDEs. For instance, Trudinger's proof of the critical embedding of $W^{1,d}_0(\Omega)$ into exponential Orlicz spaces relies on the asymptotic growth $C(d,q)\sim  q^{1-1/d}$ as $q \to \infty$, see~\cite{Trudinger67,Moser1971}. This embedding has been used later to derive a critical threshold leading to blow-up phenomena  for Keller--Segel systems, see~\cite{NagaiSenbaYoshida97,GajZac98,FujieJiang20,JinWang2020}. Moreover, considerations similar to those in~\cite{Trudinger67} underlie the Br\'ezis--Gallou\"et--Wainger type of inequality~\cite{BrezisGallouet80,BrezisWainger80}, which plays a central role in the analysis of some critical nonlinear PDEs. Of course, discrete counterparts of Sobolev inequalities, and more generally discrete functional inequalities, also play a key role in the numerical analysis of finite volume schemes for PDEs. Indeed, they are, for instance, used to prove convergence and sometimes error estimates in the following (non-exhaustive) list of contributions~\cite{Herbin95,EGH00,CoudiereGallouetHerbin2001, CCLP03,AndreianovGutnicWittbold2004,AndreianovBoyerHubert2007,DrbMik2008,CoudiereHubert2011,JuengelZurek2021}. They are also used to study qualitative properties of discrete solutions in~\cite{BessemoulinChatardFilbet2012,BessemoulinChainais2016,ChainaisJuengelSchuchnigg16,CancesChainaisHerdaKrell2020,HTZ25}. Besides, these discrete inequalities play a crucial role in the study of more general non-conforming methods, see for instance~\cite{Brenner2004,Vohralik2005,CreuseNicaise2006,DiPietroErn2010,DroniouEtAl2018} and references therein.

As described hereafter, although numerous discrete Sobolev inequalities are available in the literature, none of the available results, to the best of our knowledge, provides the dependence of the embedding constants on the Lebesgue exponents given by~\eqref{Constant Sobolev asymptotic}. Establishing such inequalities, with the correct asymptotic dependence~\eqref{Constant Sobolev asymptotic} of the constants, thus appears as a natural prerequisite for sharpening the numerical analysis toolbox of non-conforming methods. This is the main purpose of the present paper. Along the way, our analysis  recovers the whole range of expected Lebesgue exponents and fills some gaps in the literature. More precisely, we prove first, for piecewise constant functions defined on a polyhedral mesh of a bounded polyhedral domain $\Omega$:
\begin{itemize}
\item Discrete Sobolev inequalities where the Sobolev constants satisfy~\eqref{Constant Sobolev asymptotic}, see Theorem~\ref{thm:Sobembedding}.
\item Discrete Poincar\'e-Sobolev inequality for zero mean value functions, see Theorem~\ref{thm:PoincarSob}.
\item Discrete Poincar\'e-Sobolev inequality for functions ``vanishing'' on a part of $\partial \Omega$, see Theorem~\ref{thm:PoincarSobDir}.
\item Discrete Gagliardo--Nirenberg--Sobolev inequality for general boundary conditions, see Theorem~\ref{thm:GagliardoNirenbergGeneral}.
\item A new discrete Trudinger inequality, established as a consequence of the improved asymptotics of Sobolev constants, see~Theorem~\ref{thm:Trudinger1}.
\item Some applications of these results, see Section~\ref{sec:discussions}.
\end{itemize}
Moreover, the new method introduced in this paper can be extended to more general broken Sobolev spaces in order to obtain: 
\begin{itemize}
\item All the above discrete functional inequalities for general piecewise Sobolev functions, see Theorem~\ref{thm:DG} and Remark~\ref{rk:DG}.
\end{itemize}
We emphasize that a discrete version of Trudinger inequality, uniform in the mesh size for piecewise constant functions on general meshes, has not been available in the literature, and was the initial motivation of the present paper. Indeed, Trudinger's original idea is based on a series expansion of the exponential function together with the use of Sobolev inequalities~\cite{Trudinger67}. In particular, in this proof, it is crucial to obtain precise asymptotic behavior of the Sobolev constants on the Lebesgue exponents. The asymptotics obtained in the present work make it possible to adapt this strategy at the discrete level and yield an embedding of the broken Sobolev space $W^{1,d}(\Omega)$ into exponential Orlicz spaces.

\subsection{Methods of proof}

Before describing our strategy and since the core of our approach is to establish discrete Sobolev inequalities for piecewise constant functions defined on a polyhedral mesh of a bounded polyhedral domain $\Omega$, we briefly review the methods introduced in the literature to obtain such inequalities.

We can list three main and distinct, although sometimes related, approaches. To the best of our knowledge, the first one seems to go back to Herbin~\cite{Herbin95}, where the author proved in the case $d=2$, for triangular meshes of a bounded polygonal domain $\Omega$, and for Dirichlet boundary conditions, the standard Poincar\'e inequality, i.e., inequality~\eqref{Sob ineg continuous} with $p=q=2$. The underlying idea is to adapt to the discrete setting the representation formula
\begin{align}\label{Path historic}
u(x) = u(y) + \int_0^1 \nabla u(y + t(x-y)) \cdot (x-y) \, \dd t,
\end{align}
for $x,\, y \in \Omega$ such that the segment $[x,y]$ is included in $\Omega$. For this purpose, the line integral of the gradient along a straight path is replaced by a telescopic sum of jumps across successive cells of the mesh intersected by the path. This discrete path argument naturally introduces, up to mesh-dependent geometric factors, a finite volume discrete gradient (see \eqref{def discrete gradient}-\eqref{eq:SobolevSeminormDirichlet} and Lemma~\ref{lem:equivseminorm}). Later, this method has been extended, to unstructured two-dimensional meshes~\cite{CoudiereVilaVilledieu99}, to dimensions up to three on admissible meshes~\cite{EGH00}, to the non-Hilbertian setting~\cite{DroniouGallouetHerbin2003,AndreianovGutnicWittbold2004}, and to the case of general boundary conditions~\cite{EGH00,GallouetHerbinVignal2000}, see also~\cite{BCCHF} for further references.

The second idea is based on Gagliardo and Nirenberg's proof of Sobolev inequalities, which relies on the embedding of $BV(\Omega)$ into $L^{d/(d-1)}(\Omega)$ if $\Omega$ is a Lipschitz bounded domain of $\R^d$. This framework is natural for finite volumes which are conforming in $BV$, since piecewise constant functions on polyhedral meshes belong to this functional space. It was first implemented by adapting the continuous arguments at the discrete level~\cite{CoudiereGallouetHerbin2001,EGH00,DroniouGallouetHerbin2003,ChainaisDroniou2011}. The continuous $BV$ embedding was then used directly by Filbet~\cite{Filbet2006}, and this route was further developed in~\cite{SUSHI2010,AndreianovBendahmaneRuiz2011,BouchutEymardPrignet2011,BCCHF,NgwammouNdjinga2024} and extended to broken polynomial spaces in~\cite{DiPietroErn2010}. However, the application of this technique has some limitations, and fails to recover the boundary case $q=\infty$, in the super-critical regime $p>d$, and, in the critical case $p=d$, it yields $C(d,q)=O(q)$ instead of $O(q^{1-1/d})$.
 
The third idea to prove discrete Sobolev inequalities in the case of Vorono\"i meshes and for general boundary conditions has been proposed by Glitzky and Griepentrog~\cite{GlitzkyGriepentrog10}. Their method is based on a discrete counterpart of Sobolev's integral representation \cite[Theorem~4]{Burenkov},
\begin{align*}
u(x) = \int_\Omega u(y) w(y) \dd y + \int_\Omega \frac{\nabla u(\xi) \cdot (x-\xi)}{|x-\xi|^d} \int_{|x-\xi|}^\infty w\left(x+s \frac{\xi-x}{|\xi-x|}\right) s^{d-1}\, \dd s\, \dd \xi,
\end{align*}
where $w$ is an adequate bump function, and where the cone with vertex $x\in\Omega$ generated by the support of $w$ is included in $\Omega$. This identity is combined with a special treatment of weakly singular integral operators. In this respect, their approach is close to the one proposed in~\cite{Trudinger67,GilbargTrudinger2001} and is a first step towards the representation of a piecewise constant function in terms of the Riesz potential of its discrete gradient, although this terminology is not used in~\cite{GlitzkyGriepentrog10}. We note that this strategy also starts from a path representation of the form~\eqref{Path historic}. Contrary to the approach initiated in~\cite{Herbin95}, the subsequent analysis is based on a refined treatment of the geometric quantities appearing in this representation, leading to a discrete analogue of Sobolev's integral representation. 

In addition to these finite-volume approaches, discrete functional inequalities for broken Sobolev spaces, which contain piecewise constant functions, can also be recovered efficiently either by comparison with conforming reconstructions or by integration-by-parts techniques \cite{brenner03, LasisSuli03, Brenner2004,buffa09,BottiMascotto2026}, though the whole range of indices for Sobolev type inequalities has not been established through these techniques. 

Despite the variety of methods, to the best of our knowledge, none of the existing approaches has been shown to recover the asymptotic behavior~\eqref{Constant Sobolev asymptotic}, cover the full range of exponents under which Sobolev and related inequalities hold, and derive Trudinger type inequalities.

In this paper, we propose to revisit the strategies of~\cite{Herbin95,GlitzkyGriepentrog10} and to establish a new discrete potential inequality of the form
\begin{align}\label{Ineg: discrete potential}
|u(x)| \lesssim \|u\|_{L^1(\Omega)} +  \int_\Omega \frac{|\nabla u|(\xi)}{|x-\xi|^{d-1}} \dd \xi, \quad x \in \Omega.
\end{align}
In order to prove such an estimate, we adapt at the discrete level the method considered in~\cite{Trudinger67}. In the latter, the cone condition on $\Omega$ allows Trudinger to obtain, by integrating along rays inside a cone, a representation of a function thanks to the Riesz potential of its gradient. The adaptation of Trudinger's integral representation argument to the discrete finite volume setting constitutes one of the key novelties of the present paper. Besides, as a by-product, this new approach allows one to establish functional inequalities in (general) broken Sobolev spaces. We refer to~\cite{BottiMascotto2026} and references therein for a clear overview of different strategies introduced in the literature to prove such inequalities in this framework.

Finally, another source of novelty, compared, for instance, to~\cite{Herbin95,EGH00,GlitzkyGriepentrog10,BCCHF}, are the assumptions made on the mesh of $\Omega$. When establishing discrete counterparts of inequalities such as~\eqref{Ineg: discrete potential}, then~\eqref{Sob ineg continuous} and finally~\eqref{eq:trudinger}, we look for constants in the inequalities that are (obviously) independent of the choice of the mesh and (more importantly) of the mesh size. In this respect our work deviates from the many available results about functional inequalities on graphs. Here, we will consider general meshes not necessarily admissible in the sense of~\cite[Definition 9.1]{EGH00}. Instead, the cells are open polyhedral subsets, not necessarily convex, but all star-shaped with respect to a ball having a radius proportional to the mesh size, and whose faces shrink at most like a hypersphere of radius proportional to the mesh size: see Section~\ref{sec: not and results} for the formal statement and Section~\ref{sec:discussion_regularity} on the relaxation of those hypotheses. Hence, the regularity of the mesh will be quantified by two scalar parameters, and all constants in the inequalities will depend solely on these parameters as well as $\Omega$ and (explicitly) on the Lebesgue exponents.

\subsection{Outline of the paper} The paper is organized as follows. Section~\ref{sec: not and results} is dedicated to the formulation of the main assumptions on the domain $\Omega$, the associated mesh, and the presentation of our main results for piecewise constant functions. Then, we prove in Section~\ref{sec: discrete potential inequality} a discrete potential inequality of the form~\eqref{Ineg: discrete potential} which allows us to show in Section~\ref{sec: proofs main results} our main results for piecewise constant functions. Section~\ref{sec:HFV} is dedicated to the generalization to (general) broken Sobolev spaces. In particular, this implies extensions of our results to hybrid and arbitrary high-order methods. In Section~\ref{sec:discussions}, we discuss an application of the discrete Trudinger inequality to the discretization of the Keller--Segel system, as well as complementary results on the optimality of the Moser--Trudinger constant and quantification of discrete Sobolev non-embeddings. Finally, the appendix contains complementary results and discussions regarding the regularity assumptions on the mesh as well as the definition of discrete Sobolev seminorms.

\section{Notation and main results}\label{sec: not and results}

\subsection{Domain, meshes and notation}
\subsubsection{Domain}\label{subsec: domain}

Let $\Omega$ be a connected and bounded polyhedral open subset of $\R^d$, with $d\ge2$. By polyhedral domain, we mean the following: first, a polyhedron is a bounded intersection of a finite number of half-spaces. A polyhedral domain is an open domain composed of a finite union of convex polyhedra, in the sense that its closure is equal to the closure of this union. In particular $\Omega$ is a bounded, possibly non-convex, Lipschitz domain.

Since $\Omega$ is bounded and Lipschitz, it furthermore satisfies a uniform cone condition: there exists a circular (open, finite) cone $k_\Omega$ such that for any $x\in\overline{\Omega}$, there is a congruent cone with vertex $x$, $k_\Omega(x)$, such that $k_\Omega(x)\subset \Omega$, see \cite[Theorem~1.2.2.2]{grisvard11}.

Throughout this paper, we write $B(a,r)$ the open ball centered in $a\in\R^d$ and of radius $r\in[0,\infty)$ in $\R^d$, and for $d'\in\{1,\dots,d\}$ we write $B_{d'}$ the unit open ball in $\R^{d'}$. For any $x,y\in\R^d$ we write $[x,y]=\{t x + (1-t) y,\, t\in [0,1]\}$ the line segment between $x$ and $y$ and $(x,y)=\{t x + (1-t) y,\, t\in \R\}$ the line (or possibly a point) passing through $x$ and $y$. For any set $E\subset \R^{d}$, we write $\overline E$ the closure of $E$, and $\mathring E$ the interior of $E$. For any open set $E\subset \R^{d'}$, where $d'\in\{1,\dots,d\}$, we write $\partial E=\overline{E}\setminus E$ its boundary in $\R^{d'}$. Finally, $|\cdot|$ refers, with a slight abuse of notation, either to the Hausdorff measure of dimension $d$ or $d-1$ on $\R^d$ or the Euclidean norm, and $\dd(\cdot,\cdot)$ denotes the Euclidean distance between points or sets.

\subsubsection{Mesh and notation} A mesh $\M=(\T,\E,\Pcal)$ is composed of:\smallskip
\begin{itemize}
    \item $\T$: a set of open connected polyhedral subsets of $\Omega$ called cells (or control volumes);\smallskip
    \item $\E$: a set of non-empty relatively open connected polyhedral subsets of hyperplanes in $\R^d$, which are called the faces of the control volumes;\smallskip
    \item $\Pcal = \left\{x_K \right\}_{K\in \T}$: a set of points of $\Omega$, indexed by $\T$, which are called the cell centers;\smallskip
\end{itemize}
and satisfies the following properties: \smallskip
\begin{enumerate}[label=(HM.{\alph*})]
    \item \label{HM a partition} The cells cover the domain, that is,
    \begin{align*}
        \overline{\Omega} = \bigcup_{K\in\T} \overline{K}.
    \end{align*}
    \item \label{HM b boundaries} For all $K\in \T$, there exists a subset $\E_K\subset \E$ such that $\partial K = \bigcup_{\sigma\in\E_K} \overline{\sigma}$ and, for all $(\sigma,\sigma')\in \E_K ^2$ with $\sigma\neq\sigma'$, we have $|\sigma \cap \sigma'|=0$. Furthermore,
    \begin{equation*}
    \E=\bigcup_{K\in\T} \E_K.
    \end{equation*}
    \item \label{HM c interior faces}  Interior faces: for all $(K,L)\in\T^2$ with $K\neq L$, we have either $\left|\overline{K}\cap\partial L\right| = \left|\partial K\cap \overline{L}\right|=0$ or $\overline{K}\cap\overline{L}=\bigcup_{\sigma\in \EKL}\overline{\sigma}$ for the non-empty subset $\EKL = \E_K\cap\E_L$ of $\E$, in which case we say that $K$ and $L$ are neighboring cells. We write
    \begin{equation*} 
    \Eint=\{\sigma\in\E\,:\, \exists (K,L)\in\T^2,\, \sigma \in \EKL \}.
    \end{equation*}
    \item \label{HM d exterior faces} Exterior faces: for all $K\in\T$, we have either $\left|\overline{K}\cap\partial\Omega\right|=0$ or $\overline{K}\cap\partial\Omega=\bigcup_{\sigma\in\E_K \cap \Eext} \overline{\sigma}$
    where
    \begin{equation*} \Eext=\{\sigma\in\E\,:\, \sigma \subset \partial \Omega  \}.
    \end{equation*}
    \item \label{HM e cell centers} For all $K\in\T$, $x_K\in K$.\smallskip
\end{enumerate}

We then define, for all $K \in \T$ and $\sigma\in\E_K$,
\begin{equation}
    \dd_\sigma = \begin{cases} |x_K-x_L| \quad &\text{if } \sigma \in \Eint, \,\,\text{ with } \sigma \in \EKL, \\ \dd(x_K,H_\sigma) \quad &\text{if } \sigma \in \Eext,
    \end{cases}
\end{equation}
where $H_\sigma$ is the hyperplane generated by $\sigma$. Observe that the assumptions~\ref{HM c interior faces} and \ref{HM e cell centers} imply that $\dd_\sigma>0$ for all $\sigma\in\Eint$. For any cell $K$ and $\sigma\in \E_K$ we define $\nKs$ as the outward unit normal to $K$ on $\sigma$. We define respectively the cell and the face sizes as
\begin{equation}
    h_K = \diam(K), \quad K\in \T, \qquad \text{and} \qquad h_\sigma = \diam(\sigma),\quad \sigma\in\E,
\end{equation}
where $\diam(E)=\sup\{|x-y|,\, (x,y)\in E^2\}$. Finally, the size of the mesh is defined by
\begin{equation}\label{def diameter}
    h = \max_{K\in\T} h_K.
\end{equation}

\subsubsection{Regularity assumptions} In the sequel, we will use the following regularity assumptions on a mesh $\M$:
\smallskip

\begin{enumerate}[label=(HR.{\alph*})]
    \item \label{HR a volumes} There exists $\mu \in (0,\frac12]$ such that for all $K\in\T$, $K$ is star-shaped with respect to the open ball $B(x_K,\mu h)$. 
    \item \label{HR b faces} There exists $\lambda\in (0,\frac12]$ such that for all $\sigma\in\E$ we have $ \lambda^{d-1} h^{d-1} \Bdm \le |\sigma| $.\smallskip
\end{enumerate}
The notations and assumptions introduced in this section are illustrated on Figure~\ref{fig:mesh}.

\begin{figure}
\includegraphics[width = \textwidth]{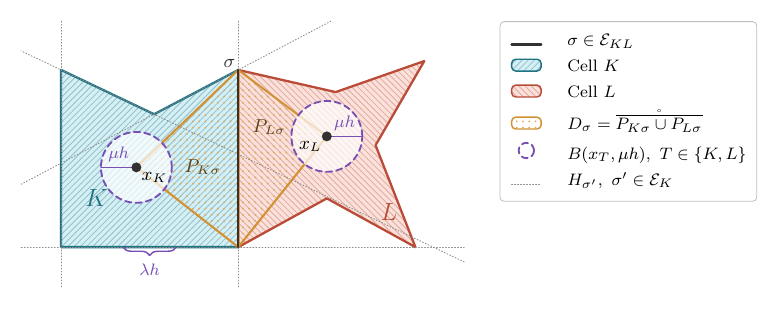}
\caption{Illustration of the notation and regularity assumptions on the mesh.}\label{fig:mesh}
\end{figure}

The constant $\mu$ in~\ref{HR a volumes} is sometimes referred to as chunkiness parameter and \ref{HR a volumes} is known as a quasi-uniformity regularity assumption \cite{fem_book}. These regularity assumptions have several consequences on the primal mesh as well as on its dual. More precisely, let us introduce $P_{K\sigma}$ as the $d$-dimensional (open) pyramid with base $\sigma$ and vertex $x_K$. A first consequence of \ref{HR a volumes} is that 
\begin{equation}\label{eq.partition}
P_{K\sigma}\cap P_{K \sigma'} = \emptyset, \hspace{2mm} \sigma \neq \sigma' \in\E_K, \qquad \text{and} \qquad \overline{K} = \bigcup_{\sigma\in\E_K} \overline{ P_{K\sigma}}.    
\end{equation}
Then, we can introduce the dual diamond cells as
\begin{equation*}
    \D = \{ D_\sigma, \, \sigma \in \E \}, \qquad \text{where} \qquad D_\sigma = \begin{cases} \mathring{\overline{ P_{K\sigma} \cup P_{L \sigma}}} \quad &{\mbox{if }\,\sigma \in \Eint, \, \mbox{ with } \,\sigma\in\EKL,}\\ 
    P_{K\sigma} \quad &{\text{if } \sigma \in \Eext, \, \mbox{ with } \sigma\in \E_K.}
    \end{cases}
\end{equation*}
An additional consequence of the regularity assumptions~\ref{HR a volumes}--\ref{HR b faces} is that all useful geometrical objects of given dimension in the mesh are comparable in terms of measure: a precise statement is given in Lemma~\ref{lem:regularity}. We furthermore discuss possible weakening of these regularity assumptions in Section~\ref{sec:discussion_regularity}.

\subsubsection{Discrete gradient and Sobolev norms} 

Given scalar values on cells $u=(u_K)_{K\in\T}\in\R^\T$, we define its associated piecewise constant function on the primal mesh $\M$
\begin{equation}\label{eq:piece_const_primal}
u^\M=\sum_{K\in\T} u_K \, \mathbf{1}_K,
\end{equation}
where $\mathbf{1}_E$ denotes the characteristic function of the set $E$. We additionally introduce an associated discrete gradient defined on the dual mesh $\D$ by
\begin{equation}\label{def discrete gradient}
\nabla^\M u = \sum \limits_{\substack{\sigma\in\Eint\\ \sigma\in\EKL}} |\sigma|\frac{u_L - u_K}{|D_\sigma|} \, \nKs \, \mathbf{1}_{D_\sigma}.
\end{equation}
Note that in the definition above, as well as in the rest of the paper, the summation holds over all $\sigma \in \Eint$. For each $\sigma\in \Eint$, the couple $(K,L)$ such that $\sigma\in \EKL$ is uniquely defined. This definition, which is classically used in the finite volume literature, see for instance~\cite[Equation~(4.11)]{CCLP03}, is discussed further in Appendix~\ref{sec:appendix}.

We will use the following Lebesgue and discrete Sobolev norms in order to establish several discrete functional inequalities. For $u\in \R^\T$, and $p\in[1,\infty]$, we define the Lebesgue norms
\begin{equation}\label{eq:lebesgue}
\|u\|_{0,p} = \|u^\M\|_{L^p(\Omega)},
\end{equation}
as well as the discrete Sobolev seminorms
\begin{equation}\label{eq:SobolevSeminorm}
|u|_{1,p} = \|\nabla^\M u\|_{L^p(\Omega)} = \left\{\begin{aligned}&\left(\sum\limits_{\substack{\sigma\in\Eint\\ \sigma\in\EKL}}|\sigma|^p|D_\sigma|^{1-p}|u_L-u_K|^p\right)^{1/p}&& \text{for } 1\leq p< \infty, \\
&\max_{\substack{\sigma\in\Eint\\ \sigma\in\EKL}}|\sigma|\frac{|u_L-u_K|}{|D_\sigma|}&& \text{for } p=\infty,\end{aligned}\right.
\end{equation}
and the discrete Sobolev norms, for $1\leq p\leq \infty$, 
\begin{equation}\label{es:Sobolev}
\qquad \|u\|_{1,p} = \|u\|_{0,p} + |u|_{1,p}.
\end{equation}

Some variants of the embeddings presented here will hold under homogeneous Dirichlet boundary conditions. We therefore need to adapt the definition of the seminorms in this context. For $\Gamma_0\subset \partial \Omega$ a portion of boundary such that $\Gamma_0=\cup_{\sigma \in \Eextz} \sigma$ for some $\Eextz\subset \Eext$, we define the discrete Sobolev seminorms associated to the homogeneous Dirichlet boundary condition on $\Gamma_0$ as
\begin{equation}\label{eq:SobolevSeminormDirichlet}
|u|_{1,p,\Gamma_0} = \left\{\begin{aligned}& \left(\sum\limits_{\substack{\sigma\in\Eint\\ \sigma \in\EKL}}|\sigma|^p|D_\sigma|^{1-p}|u_L-u_K|^p + \sum\limits_{\substack{\sigma\in\Eextz\\ \sigma\in \E_K}}|\sigma|^p|D_\sigma|^{1-p}|u_K|^p\right)^{1/p}&& \text{for } 1\leq p< \infty, \\
&\max\left(\max_{\substack{\sigma\in\Eint\\ \sigma\in\EKL}}|\sigma|\frac{|u_L-u_K|}{|D_\sigma|} , \max_{\substack{\sigma\in\Eextz\\ \sigma\in \E_K}}|\sigma|\frac{|u_K|}{|D_\sigma|}\right)&& \text{for } p=\infty.\end{aligned}\right.
\end{equation}
With this definition, the seminorm includes jumps at the boundary $\Gamma_0$, in the same spirit as in~\cite[Definition~2.1]{BCCHF}.

Note that the definition of $|\cdot|_{1,p}$ allows
one to apply elementary inequalities such as H\"older inequality, which directly gives
\begin{equation}\label{eq:Holder}
\|v\|_{0,s}\leq |\Omega|^{\frac1s-\frac1t}\|v\|_{0,t}\,,\qquad 
|v|_{1,s}\leq |\Omega|^{\frac1s-\frac1t}|v|_{1,t}\,,\qquad 1\leq s\leq t\leq\infty,
\end{equation}
and similarly for $|\cdot|_{1,p,\Gamma_0}$.

The discrete Sobolev seminorm is sometimes defined  alternatively without introducing a discrete gradient, as detailed in Appendix~\ref{sec:appendix}: the two seminorms are however equivalent for meshes satisfying the assumption~\ref{HR a volumes}, see Lemma~\ref{lem:equivseminorm}.

\subsection{Main results}
Our first result concerns discrete Sobolev embeddings with explicit improved asymptotics for the constants. A graphical representation of the cases and asymptotics of the theorem is given on Figure~\ref{fig:SobolevDiagram}.

\begin{theorem}[Discrete Sobolev embedding]\label{thm:Sobembedding}
 Let $\M$ be a mesh in the sense of {\normalfont \ref{HM a partition}--\ref{HM e cell centers}} satisfying the regularity assumptions {\normalfont \ref{HR a volumes}--\ref{HR b faces}}. Suppose that one of the following conditions holds. 
 \begin{itemize}
 \item Case A: $p\in(d,\infty]$, $q\in[p,\infty]$;
 \item Case B: $p = d$, $q\in[p,\infty)$;
 \item Case C: $p\in[1,d)$, $q\in[p,p^*]$ with $p^* = dp/(d-p)$.
 \end{itemize}
 Then, for all $u\in\R^\T$, the inequality 
\begin{equation}\label{eq:Sobemb}
 \|u\|_{0,q}\leq C^{\rm Sob}_{p,q}\|u\|_{1,p},
 \end{equation}
is satisfied for some constant $C^{\rm Sob}_{p,q}$ depending only on $p$, $q$, $\Omega$, and $\lambda$.
Furthermore, the following asymptotics hold, with constants depending only on $\Omega$ and $\lambda$,
\begin{equation}\label{eq:asymp}
\begin{aligned}
    C^{\rm Sob}_{p,\infty} &= O_{p\to d^+}\left((p-d)^{-\left(1-1/d\right)}\right), \\
    C^{\rm Sob}_{d,q} &= O_{q\to \infty}\left(q^{1-1/d}\right), \\
    C^{\rm Sob}_{p,p^*} &= O_{p\to d^-}\left((d-p)^{-\left(1-1/d\right)}\right).
\end{aligned}
\end{equation}
Finally, the inequality \eqref{eq:Sobemb} is fulfilled trivially with $C^{\rm Sob}_{p,q}=|\Omega|^{1/q-1/p}$ if $1\leq q< p$. 
\end{theorem}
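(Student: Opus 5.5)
The plan is to reduce the theorem to a discrete potential inequality of the form \eqref{Ineg: discrete potential} and then argue as in the continuous case, following Trudinger and Gilbarg--Trudinger. The first step, which is where all the mesh geometry enters, is a pointwise bound of the form
\begin{equation}
|u^\M(x)| \le C_1\|u\|_{0,1} + C_2\int_\Omega \frac{|\nabla^\M u|(\xi)}{|x-\xi|^{d-1}}\,\dd\xi \qquad\text{for a.e. } x\in\Omega,
\end{equation}
where $C_1,C_2$ depend only on $\Omega$ and $\lambda$ (and on $\mu$ through the mesh regularity, as in Section~\ref{sec: discrete potential inequality}).

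To prove this bound, I will fix $x\in K$ and use the cone $k_\Omega(x)\subset\Omega$. For $y$ in a truncated part of the cone, I will write $u_K-u_{K_y}$ as a telescopic sum of jumps $u_L-u_{K'}$ across the faces crossed by the segment $[x,y]$. I will then average over $y$ in polar coordinates centred at $x$. Each face $\sigma$ crossed at distance $r$ from $x$ is hit by a solid angle of rays of order $|\sigma|/r^{d-1}$, and Lemma~\ref{lem:regularity} makes $|\sigma|$, $|D_\sigma|$ and $h^d$ comparable. With these facts, the averaged jump sum becomes $\sum_\sigma |\sigma|\,|u_L-u_K|\,\dd(x,\sigma)^{1-d}$. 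This is comparable to $\int |\nabla^\M u|\,|x-\xi|^{1-d}\,\dd\xi$, because $|\nabla^\M u|$ equals $|\sigma||u_L-u_K|/|D_\sigma|$ on $D_\sigma$ and $|x-\xi|$ is comparable to $\dd(x,\sigma)+h$ there.

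The main obstacle is the rays that pass through vertices or lower-dimensional faces, together with faces very close to $x$, where the $r^{1-d}$ weight blows up. I would handle the near field (faces within distance of order $h$) by simply bounding those jumps by the potential integral, using the lower bound $|\sigma|\gtrsim h^{d-1}$ from \ref{HR b faces}. Degenerate rays form a null set.

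Given the potential bound, the exponents are handled by the classical estimates for the Riesz potential $V_{1/d}f(x)=\int|x-\xi|^{1-d}f(\xi)\,\dd\xi$ on a bounded set, applied with $f=|\nabla^\M u|$. The $\|u\|_{0,1}$ term is controlled by $|\Omega|^{1-1/p}\|u\|_{0,p}$ using \eqref{eq:Holder}.
\begin{itemize}
\item \textbf{Case A.} H\"older's inequality gives $\|V_{1/d}f\|_\infty\le \big(\int_\Omega|x-\xi|^{(1-d)p'}\dd\xi\big)^{1/p'}\|f\|_p$. The kernel integral is at most $C\big(1-(d-1)p'/d\big)^{-1}$ times a power of $|\Omega|$, and since $1-(d-1)p'/d\sim c(p-d)$, raising to the power $1/p'\to 1-1/d$ yields $(p-d)^{-(1-1/d)}$.
\item \textbf{Case B.} The Gilbarg--Trudinger lemma~7.12 gives $\|V_{1/d}f\|_q\le C\,(1+q/d')^{1-1/d}\|f\|_d$ with $d'=d/(d-1)$, which grows like $q^{1-1/d}$ and gives the second line of \eqref{eq:asymp}.
\item \textbf{Case C.} The Hardy--Littlewood--Sobolev bound $V_{1/d}:L^p\to L^{p^*}$ holds with constant $O\big((d-p)^{-(1-1/d)}\big)$, as in Gilbarg--Trudinger (7.36)--(7.37). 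Alternatively, I would interpolate through the $BV$ bound at $p=1$ and Young's inequality for weak-type kernels, tracking the constant.
\end{itemize}
Intermediate values of $q$ then follow from \eqref{eq:Holder}. The trivial case $q<p$ is exactly \eqref{eq:Holder} together with $\|u\|_{0,p}\le\|u\|_{1,p}$.
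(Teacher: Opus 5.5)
Your strategy is essentially the paper's. You telescope jumps along $[x,y]$, average over $y$ in the cone $k_\Omega(x)$, bound the volume of each face's shadow, and treat near faces with \ref{HR b faces} (exactly Case~1 of Lemma~\ref{lem.Aksigma_reg}). You then apply Riesz-potential bounds. Your radial-projection bound $|S_{x\sigma}|\lesssim \diam(\Omega)^d|\sigma|\,\dd(x,\sigma)^{1-d}$ is a slightly cleaner substitute for the convex-hull cone of Lemma~\ref{lem.Aksigma}. It never involves $\widehat\sigma$, so the case $x_K\in\widehat\sigma$ disappears, and for far faces it does not even need \ref{HR b faces}. Gilbarg--Trudinger's Lemma~7.12 gives exactly the paper's $\Lambda_{p,q}=\bigl(\frac{1-\delta}{1/d-\delta}\bigr)^{1-\delta}$ with $\delta=\frac1p-\frac1q$; the paper rederives this constant via Young's convolution inequality.

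The gap is the endpoint $p=1$, $q=d/(d-1)$ of Case~C. Hardy--Littlewood--Sobolev fails at $p=1$, because $V_{1/d}$ maps $L^1$ only into weak-$L^{d/(d-1)}$. So the potential inequality cannot reach this exponent. Since you derive all $q\in[p,p^*]$ from the endpoint $p^*$ by H\"older, the whole line $p=1$ is uncovered as written. Mentioning the $BV$ bound as one ingredient of an unspecified interpolation scheme does not settle this. What is needed, and what the paper does, is to treat this case directly, independently of the potential estimate:
\begin{itemize}
\item $|u|_{1,1}$ is exactly the total variation of $u^\M\in BV(\Omega)$.
\item The embedding $BV(\Omega)\hookrightarrow L^{d/(d-1)}(\Omega)$ on the Lipschitz domain $\Omega$ then gives $\|u-\bar u\|_{0,d/(d-1)}\le C^{\rm BV}|u|_{1,1}$.
\item Hence $\|u\|_{0,d/(d-1)}\le |\Omega|^{-1/d}\|u\|_{0,1}+C^{\rm BV}|u|_{1,1}$.
\end{itemize}
For $p=1$ and $q<d/(d-1)$, Lemma~7.12 alone would suffice. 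This endpoint is irrelevant for the asymptotics, but it is part of the statement.

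Two minor points. First, remove the hedge that $C_1,C_2$ may depend on $\mu$. The theorem claims independence of $\mu$, and your argument does not need it, provided you use only $\diam(D_\sigma)\le 2h$ and \ref{HR b faces}, not $|D_\sigma|\sim h^d$ (which does require $\mu$). Relatedly, on $D_\sigma$ the distance $|x-\xi|$ is only bounded above by $\dd(x,\sigma)+2h$, not comparable to it; the upper bound is all you use. Second, take the explicit HLS constant with growth $q^{1-1/d}$ at $q=p^*$ from Lieb--Loss, Theorem~4.3, as the paper does. Lemma~7.12 of Gilbarg--Trudinger only covers $q<p^*$.
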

\begin{figure}[!ht]
\centering
\includegraphics[width=.8\textwidth]{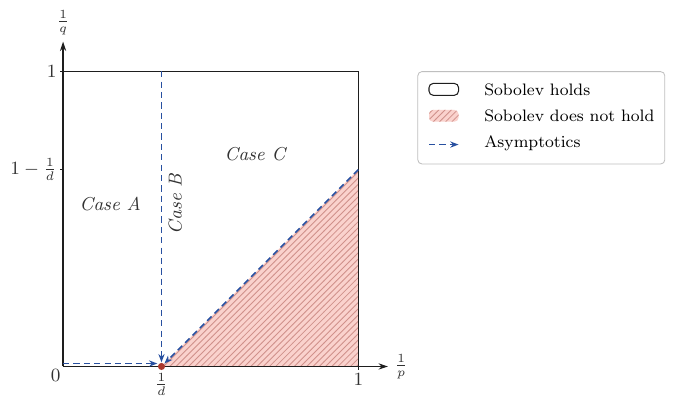}
\caption{Illustration of the cases in Theorem~\ref{thm:Sobembedding}.} \label{fig:SobolevDiagram}
\end{figure}

Theorem~\ref{thm:Sobembedding} is a consequence of a more precise result stated in Proposition~\ref{prop:preciseembedding}. Its proof relies on a discrete potential inequality of the form~\eqref{Ineg: discrete potential} established in Proposition~\ref{prop.fundineq}, which is inspired by the strategy of~\cite{Trudinger67}.

Then, making further use of the refined inequality of Proposition~\ref{prop:preciseembedding}, we prove in Sections~\ref{sec: proofs Thm Poincare} and~\ref{sec: Gagliardo--Nirenberg proof} the following three results which are discrete counterparts of Poincar\'e-Sobolev, and Gagliardo--Nirenberg--Sobolev inequalities. For the latter, we recover the whole range of expected parameters compared to the continuous case. In particular the super-critical case $p>d$ and $q=\infty$ was not available in the literature. This case requires a refined analysis through a localization of our estimates, which is carried out in Section~\ref{sec: Gagliardo--Nirenberg proof}.

\begin{theorem}[Discrete Poincar\'e-Sobolev inequality for zero mean functions]\label{thm:PoincarSob}
Let the assumptions of Theorem~\ref{thm:Sobembedding} hold. Then, there exists a constant $C^{0}_{p,q}>0$ depending only on $p$, $q$, $\Omega$ and $\lambda$ such that, for all $u\in\R^\T$,
\begin{equation}\label{eq:PoincSobWir}
\|u - \bar{u}\|_{0,q}\leq C^{0}_{p,q}|u|_{1,p},\qquad \bar{u} = \frac{1}{|\Omega|}\sum_{K\in\T} |K|u_K. 
\end{equation}
Moreover, the constant $C^{0}_{p,q}$ behaves asymptotically as in~\eqref{eq:asymp}.
\end{theorem}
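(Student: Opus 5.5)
We propose to argue from the discrete potential inequality of Proposition~\ref{prop.fundineq}, which is the mechanism behind Theorem~\ref{thm:Sobembedding}, rather than by a compactness argument, since a compactness proof would not give the asymptotics~\eqref{eq:asymp}.

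The starting point is the pointwise estimate
\begin{equation}
|u^\M(x) - \bar{u}| \lesssim \int_\Omega \frac{|\nabla^\M u|(\xi)}{|x-\xi|^{d-1}}\, \dd \xi, \qquad x\in\Omega.
\end{equation}
This is what~\eqref{Ineg: discrete potential} becomes once the $\|u\|_{L^1}$ term is replaced by an average. In Trudinger's cone argument the average term arises as $\int u\, w$ for a bump $w$ supported near the vertex of the cone. We would modify the construction so that the reference value is the mean of $u$ over all of $\Omega$.

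Concretely, for $x,y\in\Omega$ we would connect them by a chain of cones and balls. The domain is a connected Lipschitz polyhedron, so it can be covered by finitely many balls (depending only on $\Omega$) that lie in uniform cones. Summing the discrete telescopic path estimates along these chains should give
\begin{equation}
|u^\M(x)-u^\M(y)| \lesssim I(x)+I(y)+\sum_j \int_{B_j}\frac{|\nabla^\M u|(\xi)}{|z_j-\xi|^{d-1}}\,\dd\xi,
\end{equation}
where $I$ denotes the Riesz potential of $|\nabla^\M u|$. The last sum is a finite number of averaged potentials, each bounded by $C|u|_{1,1}$ through Fubini. Averaging in $y$ then yields
\begin{equation}
|u^\M(x)-\bar u|\lesssim I(x)+\int_\Omega I\,\dd y + |u|_{1,1} \lesssim I(x)+|u|_{1,1}.
\end{equation}

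From this bound we would apply to $I$ the same weakly singular operator estimates used for Proposition~\ref{prop:preciseembedding}, namely Young or Hardy--Littlewood--Sobolev type bounds for $|x|^{1-d}$ restricted to $\Omega$. These are precisely the estimates that produce the constants with the asymptotics~\eqref{eq:asymp}. The additional $|u|_{1,1}$ term is bounded by $|\Omega|^{1-1/p}|u|_{1,p}$ through~\eqref{eq:Holder}, which does not affect the asymptotics. This gives~\eqref{eq:PoincSobWir} with $C^0_{p,q}$ behaving as in~\eqref{eq:asymp}.

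An alternative, if the representation with the mean is too delicate to set up, would be to apply Theorem~\ref{thm:Sobembedding} to $v=u-\bar u$, which has the same seminorm. Then $\|u-\bar u\|_{0,q}\le C^{\rm Sob}_{p,q}(\|v\|_{0,p}+|u|_{1,p})$, and it would only remain to prove a discrete Poincar\'e--Wirtinger inequality $\|v\|_{0,p}\le C|u|_{1,p}$ with a constant bounded uniformly in $p$ near $d$. This $L^p$ bound follows from the potential estimate above together with the elementary bound $\|I\|_{L^p}\le C\,\|\nabla^\M u\|_{L^p}$, whose constant is uniform in $p$ since the kernel $|x-\xi|^{1-d}$ is integrable.

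The main obstacle is the chaining step. We need to show that the discrete path (telescopic jump) estimates along rays within cones, as used in Proposition~\ref{prop.fundineq}, can be concatenated across overlapping cones with constants depending only on $\Omega$ and $\lambda$ and not on the mesh. The mesh does not align with the cones, and the paths through the overlap regions must be averaged so that the geometric factors are controlled by~\ref{HR b faces}.
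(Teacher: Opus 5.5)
There is a genuine gap: the one step that actually handles the mean, a bound of $u-\bar u$ in terms of the discrete gradient, is never proved. Your reduction is sound, and your fallback is close to the right structure. But both routes rest on the mean-value potential estimate $|u^\M(x)-\bar u|\lesssim I(x)+|u|_{1,1}$, obtained by chaining discrete paths through overlapping cones. You flag that chaining as the main obstacle and leave it open.

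The sketch is also not right as written. Lemma~\ref{lem.triangineqalongpath} bounds $|u^\M(x)-u^\M(y)|$, for fixed endpoints, by the sum of jumps along a single segment, and that sum is not pointwise controlled by Riesz potentials. The factor $A_{K\sigma}$, and hence the kernel $|x-\xi|^{1-d}$, only appears after integrating the free endpoint over a cone (Lemma~\ref{lem:ineqAksig}). A two-point bound of the form $I(x)+I(y)+\dots$ would therefore need the intermediate points averaged over balls. It would also need a separate ball-to-ball estimate, obtained by measuring the set of pairs of points whose connecting segment crosses a given face. Likewise, a potential $\int_{B_j}|\nabla^\M u|(\xi)\,|z_j-\xi|^{1-d}\,\dd\xi$ at a fixed point $z_j$ is not bounded by $|u|_{1,1}$; only its average over $z_j$ in a ball is. This can probably be made to work, but it is a substantial missing argument.

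The missing idea is that the $p$-dependence and the treatment of the mean decouple, so no new potential estimate is needed. Apply Proposition~\ref{prop:preciseembedding}, rather than Theorem~\ref{thm:Sobembedding}, to $v=u-\bar u$. Since $|v|_{1,p}=|u|_{1,p}$ and the lower-order term there is $|\Omega|^{1/q}\|v\|_{0,1}$, you only need a Poincar\'e--Wirtinger inequality at $p=1$, with a single constant independent of $p$.

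That inequality is classical. The function $u^\M$ lies in $BV(\Omega)$, and its total variation is exactly $|u|_{1,1}$. So the BV Poincar\'e--Sobolev inequality \eqref{eq:BVestim} on the connected Lipschitz domain $\Omega$, together with \eqref{eq:Holder}, gives
$\|u-\bar u\|_{0,1}\le |\Omega|^{1/d}C^{\rm BV}|u|_{1,1}\le |\Omega|^{1/d+1-1/p}C^{\rm BV}|u|_{1,p}$.
Inserting this and using $\Lambda_{p,q}\ge 1$ yields $C^0_{p,q}=C\,\diam(\Omega)^{1+\frac dq-\frac dp}\Lambda_{p,q}$, with $C$ depending only on $\Omega$ and $\lambda$. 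This has exactly the asymptotics \eqref{eq:asymp}, and it is the paper's proof.

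Your fallback can be repaired the same way. Proposition~\ref{prop:preciseembedding} with $q=p$, where $\Lambda_{p,p}=d$, reduces $\|v\|_{0,p}$ to $\|v\|_{0,1}$ uniformly in $p$, so the $p$-uniform $L^p$ Poincar\'e--Wirtinger inequality you asked for also follows from the BV bound. Your reason for avoiding non-quantitative arguments, that they would lose the asymptotics, does not apply to this $p$-independent $L^1$ step. All of the blow-up is carried by $\Lambda_{p,q}$.
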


\begin{theorem}[Discrete Poincar\'e-Sobolev inequality with Dirichlet boundary conditions]\label{thm:PoincarSobDir}
Let $\Gamma_0\subset \partial \Omega$ be a non-empty portion of the boundary. Then, under the assumptions of Theorem~\ref{thm:Sobembedding}, and assuming that $\Gamma_0=\cup_{\sigma \in \Eextz} \sigma$ for some $\Eextz\subset \Eext$, there exists a constant $C^{\Gamma_0}_{p,q}>0$ depending only on $p$, $q$, $\Omega$, $\Gamma_0$, and $\lambda$ such that, for all $u\in\R^\T$,
\begin{equation}\label{eq:PoincSob}
\|u\|_{0,q}\leq C^{\Gamma_0}_{p,q}|u|_{1,p,\Gamma_0}. 
\end{equation}
Furthermore, the constant $C^{\Gamma_0}_{p,q}$ behaves asymptotically as in~\eqref{eq:asymp}.
\end{theorem}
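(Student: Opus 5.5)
Theorem~\ref{thm:PoincarSobDir} should follow from Theorem~\ref{thm:Sobembedding} once we have a discrete Poincaré inequality with Dirichlet condition on $\Gamma_0$ and an explicit control of its constant. Applying \eqref{eq:Sobemb} gives
\[
\|u\|_{0,q}\le C^{\rm Sob}_{p,q}\big(\|u\|_{0,p}+|u|_{1,p}\big)\le C^{\rm Sob}_{p,q}\big(\|u\|_{0,p}+|u|_{1,p,\Gamma_0}\big),
\]
because $|u|_{1,p}\le|u|_{1,p,\Gamma_0}$ by definition. It therefore suffices to prove
\[
\|u\|_{0,p}\le C_P\,|u|_{1,p,\Gamma_0}
\]
with $C_P$ bounded uniformly for $p$ in compact subsets of $[1,\infty]$, in particular near $p=d$. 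Then $C^{\Gamma_0}_{p,q}=C^{\rm Sob}_{p,q}(1+C_P)$, and this constant inherits the asymptotics \eqref{eq:asymp}. In Case B we have $p=d$ fixed, so only $q\to\infty$ matters. In Cases A and C the limit is $p\to d^\pm$, where $C_P$ stays bounded.

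To obtain the Poincaré inequality uniformly in $p$, I will first reduce to $p=1$. For $p<\infty$, apply an $L^1$ inequality $\|v\|_{0,1}\le C_1|v|_{1,1,\Gamma_0}$ to $v=|u|^{p}$, componentwise $v_K=|u_K|^p$. By the mean value theorem,
\[
\big||u_L|^p-|u_K|^p\big|\le p\,(|u_K|^{p-1}+|u_L|^{p-1})\,|u_L-u_K|,
\]
and similarly $|v_K|=|u_K|^{p-1}|u_K|$ for the boundary terms. We then use Hölder with the weights $|\sigma|\,|u_K|^{p-1}\,|u_L-u_K|$ and the comparability from Lemma~\ref{lem:regularity}: $|D_\sigma|\simeq|K|\simeq h^d$, $|\sigma|\simeq h^{d-1}$, and bounded numbers of faces per cell. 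This gives
\[
\|u\|_{0,p}^p\le C\,p\,\|u\|_{0,p}^{p-1}\,|u|_{1,p,\Gamma_0},
\]
that is, $\|u\|_{0,p}\le Cp\,|u|_{1,p,\Gamma_0}$ with $C$ independent of $p$. This is uniformly bounded near $p=d$, which is all we need. For $p=\infty$, use \eqref{eq:Holder} to pass to a finite exponent, for instance $p=2d$. Alternatively, the $p$-independent bound can be obtained directly by the path argument below.

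The main step is the $L^1$ discrete Poincaré inequality $\|v\|_{0,1}\le C_1(\Omega,\Gamma_0,\lambda)\,|v|_{1,1,\Gamma_0}$. I would prove it with the classical straight-path argument of~\cite{Herbin95,EGH00}. Fix a direction $\eta$ and a subset $\Gamma_0'\subset\Gamma_0$ lying on a single face of $\partial\Omega$, such that a set $\omega\subset\Omega$ of positive measure is swept by segments $x+t\eta$ that start at $\Gamma_0'$ and stay in $\Omega$. For such $x$, bound $|v(x)|$ by the sum of the jumps crossed along the segment plus the boundary value at the $\Gamma_0$ face. Integrating over $x$ in $\omega$ and using that the measure of the set of $x$ whose segment crosses $\sigma$ is at most $\mathrm{diam}(\Omega)\,|\sigma|$ gives $\|v\|_{L^1(\omega)}\le C\,|v|_{1,1,\Gamma_0}$, since $|\sigma|^1|D_\sigma|^0=|\sigma|$.

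To pass from $\omega$ to all of $\Omega$ I would avoid a compactness argument, since it gives no uniform constant with respect to the mesh. Instead I use Theorem~\ref{thm:PoincarSob} with $p=q=1$:
\[
\|v-\bar v\|_{0,1}\le C^0_{1,1}\,|v|_{1,1},
\]
together with
\[
|\omega|\,|\bar v|\le\|v\|_{L^1(\omega)}+\|v-\bar v\|_{L^1(\omega)}.
\]
This gives $\|v\|_{0,1}\le C\,|v|_{1,1,\Gamma_0}$. The only mesh-dependent point is that $\Gamma_0$ consists of whole faces of size comparable to $h$ by \ref{HR b faces}. A fixed open piece of $\Gamma_0$ then carries boundary faces covering it up to a set of measure $O(h^{d-1})$, which suffices for small $h$. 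For large $h$ the mesh has boundedly many cells, and a direct argument over at most $C(\Omega,\lambda)$ cells applies. This geometric bookkeeping is the main obstacle.

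Since the plan relies on Theorem~\ref{thm:PoincarSob}, I would prove Theorem~\ref{thm:PoincarSobDir} after it. That theorem itself reduces to Theorem~\ref{thm:Sobembedding} plus a zero-mean Poincaré inequality handled in the same way.
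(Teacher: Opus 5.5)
\textbf{Gap: the constant picks up a dependence on $\mu$.} Your overall architecture is sound: a Sobolev embedding plus a lower-order Poincar\'e bound carrying the Dirichlet information on $\Gamma_0$. Your $L^1$ step is also fine. The straight-path argument from a fixed flat piece $\Gamma_0'\subset\Gamma_0$, combined with Theorem~\ref{thm:PoincarSob} for $p=q=1$, gives $\|v\|_{0,1}\le C_1|v|_{1,1,\Gamma_0}$ with $C_1$ depending only on $\Omega$, $\Gamma_0$, $\lambda$. It is a valid substitute for the paper's citation of \cite[Lemma~4.1]{BCCHF}. The ``geometric bookkeeping'' you worry about does not arise: $\Gamma_0$ is fixed data, and the compatibility $\Gamma_0=\cup_{\sigma\in\Eextz}\sigma$ already ensures every segment endpoint on $\Gamma_0'$ lies on a face of $\Eextz$.

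The gap is the passage from $L^1$ to $L^p$ through $v=|u|^p$. To absorb the cross terms $|u_K|^{p-1}|u_L-u_K|$ you need $\sum_{\sigma\in\E_K}|D_\sigma|\lesssim|K|$ and a bounded number of faces per cell. Both come from Lemma~\ref{lem:regularity} with constants that depend on $\mu$. Hence your $C_P$, and therefore $C^{\Gamma_0}_{p,q}$, depends on $\mu$. The theorem asserts dependence only on $p,q,\Omega,\Gamma_0,\lambda$, and the paper stresses this in Section~\ref{sec:discussion_regularity}: $\mu$ may degenerate along a mesh sequence.

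This is not mere bookkeeping, because your intermediate inequality $|v|_{1,1,\Gamma_0}\le Cp\,\|u\|_{0,p}^{p-1}|u|_{1,p,\Gamma_0}$ is false uniformly in $\mu$. In $d=2$, take $u=\mathbf{1}_K$ for an interior flat triangle $K$ with edges of length $\simeq h$ and inradius $\simeq\mu h$, surrounded by fat neighbours. Then the left side is $\simeq h$, while the right side is $\simeq p\,\mu^{1-1/p}h$.

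\textbf{Fix: never go through $\|u\|_{0,p}$.} Use Proposition~\ref{prop:preciseembedding} instead of Theorem~\ref{thm:Sobembedding}; its lower-order term is only $|\Omega|^{1/q}\|u\|_{0,1}$. Your $L^1$ bound and \eqref{eq:Holder} applied to the discrete gradient then give $\|u\|_{0,1}\le C_1|u|_{1,1,\Gamma_0}\le C_1|\Omega|^{1-1/p}|u|_{1,p,\Gamma_0}$. This is $\mu$-free precisely because $|\cdot|_{1,p,\Gamma_0}$ is an $L^p$ norm of a function on $\Omega$. Together with $|u|_{1,p}\le|u|_{1,p,\Gamma_0}$, it yields $C^{\Gamma_0}_{p,q}=C\,\diam(\Omega)^{1+\frac dq-\frac dp}\Lambda_{p,q}$, which is exactly the paper's proof.

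If you want to rely only on the statement of Theorem~\ref{thm:Sobembedding}, you can instead bootstrap from $L^1$ through the finitely many fixed exponents $1\to\frac{d}{d-1}\to\cdots\to d\to 2d$ (and to $\infty$ if needed). Each step uses the theorem at a fixed pair of exponents, with $|u|_{1,s}\le|\Omega|^{\frac1s-\frac1p}|u|_{1,p,\Gamma_0}$ for $s\le p$. This produces a $\mu$-free $C_P$ that stays bounded near $p=d$, so the asymptotics \eqref{eq:asymp} are preserved.
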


\begin{theorem}[Discrete Gagliardo--Nirenberg--Sobolev inequality]\label{thm:GagliardoNirenbergGeneral}
Let $\M$ be a mesh in the sense of {\normalfont \ref{HM a partition}--\ref{HM e cell centers}} satisfying the regularity assumptions {\normalfont \ref{HR a volumes}--\ref{HR b faces}}. Let $1\leq p,q, r\leq \infty$ and $\theta \in[0,1]$ such that
\begin{align*}
    \frac1q = \theta\left(\frac{1}{p}-\frac1d\right) + (1-\theta)\frac{1}{r}.
\end{align*}
Then, provided $\theta \neq 1$ when $p=d$, there exists a constant $C>0$ which depends only on $p$, $q$, $r$,  $\lambda$, $\mu$ and $\Omega$ such that for any $u\in\R^\T$,
\begin{align}\label{ineg GNS}
    \|u\|_{0,q} \leq C \, \|u\|_{1,p}^{\theta} \, \|u\|_{0,r}^{1-\theta}.
\end{align}
Similarly, estimate~\eqref{ineg GNS} holds, up to changing the constant, with $\|u\|_{1,p}$ replaced by $|u|_{1,p}$ if $u$ has zero mean value, i.e., $\bar{u}=0$. Finally, it also holds, up to changing the constant, if $\Gamma_0 \subset \partial \Omega$ as in Theorem~\ref{thm:PoincarSobDir}, and $\|u\|_{1,p}$ is replaced by $|u|_{1,p,\Gamma_0}$.
\end{theorem}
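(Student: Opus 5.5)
The plan is to split according to the position of $p$ relative to $d$, reducing everything to Hölder interpolation between Lebesgue norms plus the Sobolev embedding of Theorem~\ref{thm:Sobembedding}. The one exception is the supercritical endpoint $q=\infty$, which needs a localized potential estimate.

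\textbf{Step 1: the case $\theta<1$ with $q<\infty$.} The relation defining $\theta$ says that $1/q$ is a convex combination of $1/s$ and $1/r$, where $s$ is the Sobolev target exponent: $s=p^*$ if $p<d$, and $s=\infty$ if $p>d$. When $p=d$ one has $\theta<1$, and we can choose a finite $s$ large enough that $1/q=\eta/s+(1-\eta)/r$ for some $\eta\in[\theta,1)$. The log-convexity (Hölder interpolation) of $L^q$ norms applied to $u^\M$ then gives
\begin{equation*}
\|u\|_{0,q}\le \|u\|_{0,s}^{\eta}\,\|u\|_{0,r}^{1-\eta}.
\end{equation*}
Theorem~\ref{thm:Sobembedding} bounds $\|u\|_{0,s}\le C\|u\|_{1,p}$. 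For $p<d$, and for $p>d$ with $q<\infty$, we have $\eta=\theta$ and are done. For $p=d$ we still have to pass from $\eta$ down to $\theta$. We use $\|u\|_{0,r}\le C\|u\|_{0,d}$ when $r\le d$, or Sobolev again when $r>d$, together with $\|u\|_{0,r}\lesssim\|u\|_{1,d}$ (Theorem~\ref{thm:Sobembedding}, Case B, when $r<\infty$). This lets us trade a power of $\|u\|_{0,r}$ for the same power of $\|u\|_{1,d}$, which gives exactly the exponent $\theta$. The case $r=\infty$ is handled directly by interpolating between $q$ and $\infty$.

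\textbf{Step 2: the case $p>d$, $q=\infty$, $\theta\in(0,1)$.} This is the main obstacle: interpolation through $L^\infty$ would give only $\theta=1$, so a genuine local argument is needed. The plan is to use the discrete potential inequality of Proposition~\ref{prop.fundineq} in a localized form. For $x\in\Omega$ and a radius $R\ge h$ we would aim for
\begin{equation*}
|u(x)|\lesssim R^{-d/r}\|u\|_{L^r(\Omega\cap B(x,R))}+\int_{\Omega\cap B(x,CR)}\frac{|\nabla^\M u|(\xi)}{|x-\xi|^{d-1}}\,\dd\xi .
\end{equation*}
This comes from running the cone/ray argument in a cone of height $R$ truncated from $k_\Omega(x)$; regularity~\ref{HR a volumes}--\ref{HR b faces} keeps the constants uniform once $R\gtrsim h$. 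Applying Hölder to the potential term gives
\begin{equation*}
|u(x)|\lesssim R^{-d/r}\|u\|_{0,r}+R^{1-d/p}|u|_{1,p}.
\end{equation*}
We then optimize over $R$, balancing the two terms. If the optimal $R$ is below $h$, we take $R=h$ instead. In that regime $u$ is essentially constant at scale $h$, the $L^r$ term dominates, and an inverse estimate $\|u\|_{0,\infty}\lesssim h^{-d/r}\|u\|_{0,r}$ on one cell covers it. If the optimal $R$ exceeds $\diam\Omega$, we take $R=\diam\Omega$ and use $\|u\|_{0,r}\lesssim\|u\|_{1,p}$. In every regime this yields $\|u\|_{0,\infty}\lesssim\|u\|_{1,p}^\theta\|u\|_{0,r}^{1-\theta}$ with $1/\infty=\theta(1/p-1/d)+(1-\theta)/r$. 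The dependence on $\mu$ in the constant enters through this localization.

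\textbf{Step 3: the variants.} For the zero-mean version, we apply the above to $u$ and use Theorem~\ref{thm:PoincarSob} to replace $\|u\|_{1,p}$ by $|u|_{1,p}$, since $\|u\|_{0,p}\lesssim|u|_{1,p}$ when $\bar u=0$. For the Dirichlet version we argue in the same way with Theorem~\ref{thm:PoincarSobDir}. In Step 2, the localized potential bound is stated with the full norm; after optimization, only $|u|_{1,p}$ and $\|u\|_{0,r}$ remain, with $\|u\|_{0,r}$ controlled by Poincaré in the large-$R$ regime.
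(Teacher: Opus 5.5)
Your Step~2 is sound and is essentially the paper's localization (Lemma~\ref{lem.Aksigma_loc} and \eqref{eq:loc_estimate_discrete}, with the one-cell inverse estimate below the mesh scale). Your Step~1 is correct for $p<d$, where it is exactly the paper's argument. The gap is Step~1 for $p\ge d$, where the exponent bookkeeping runs the wrong way.

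For $p>d$ and $q<\infty$, interpolating between $L^\infty$ and $L^r$ gives $\eta=1-r/q$. The relation instead gives $\theta=(1/r-1/q)/(1/r+1/d-1/p)$. Since $1/d-1/p>0$, you get $\eta>\theta$ strictly; for instance $d=2$, $p=4$, $r=1$, $q=2$ gives $\eta=1/2$ and $\theta=2/5$. So $\eta=\theta$ is false. A bound $\|u\|_{1,p}^{\eta}\|u\|_{0,r}^{1-\eta}$ with $\eta>\theta$ is weaker than the claim: lowering the exponent to $\theta$ would require $\|u\|_{1,p}\lesssim\|u\|_{0,r}$, which is not uniform in $h$.

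The same sign error breaks your case $p=d$. There $\eta>\theta=1-r/q$, and replacing a power of $\|u\|_{0,r}$ by the same power of $\|u\|_{1,d}$ (via $\|u\|_{0,r}\lesssim\|u\|_{1,d}$) \emph{raises} the exponent of $\|u\|_{1,d}$ rather than lowering it. Even with the sharp growth $C^{\rm Sob}_{d,s}=O(s^{1-1/d})$, optimizing in $s$ leaves a factor of order $\bigl(\ln(\|u\|_{1,d}/\|u\|_{0,r})\bigr)^{(1-1/d)\theta}$. Interpolation through Sobolev embeddings alone therefore does not reach the sharp exponent at $p=d$.

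For $p>d$ the repair exists inside your own scheme. Prove Step~2 first, $\|u\|_{0,\infty}\lesssim\|u\|_{1,p}^{\theta_\infty}\|u\|_{0,r}^{1-\theta_\infty}$ with $\theta_\infty=(1/r)/(1/r+1/d-1/p)$. Then interpolate \emph{this} bound with $L^r$; the exponent becomes $\theta_\infty(1-r/q)=\theta$ exactly.

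For $p=d$ a genuinely new ingredient is needed. Your pointwise H\"older step requires $|z|^{1-d}\mathbf{1}_{|z|\le CR}\in L^{p'}$, which fails at $p'=d/(d-1)$. So the localized potential bound must be integrated in $L^q$ rather than evaluated pointwise. This is what the paper does in Proposition~\ref{prop:evenmorepreciseembedding}, applying Young's convolution inequality to \eqref{eq:loc_estimate_discrete}. The truncated kernel lies in $L^s$, $1/s=1+1/q-1/p$, whenever $1/q-1/p+1/d>0$, i.e.\ $q<\infty$ if $p=d$. This gives $\|u\|_{0,q}\lesssim\rho^{d/q-d/r}\|u\|_{0,r}+\rho^{1+d/q-d/p}|u|_{1,p}$. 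Optimizing in $\rho$, with the same small- and large-scale regimes as yours, then covers all $p\ge d$ at once. (Young directly needs $s\ge 1$, i.e.\ $q\ge p$; smaller $q$ follow by H\"older interpolation with $L^r$, which preserves the GNS exponent.)

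Finally, your remark that $r=\infty$ is handled by interpolation cannot be right for $p=d$. There $r=\infty$ forces $q=\infty$, and for $\theta\in(0,1)$ the inequality reduces to the false uniform bound $\|u\|_{0,\infty}\lesssim\|u\|_{1,d}$. The paper's proof correspondingly only treats $q<\infty$ when $p=d$.
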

Though we do not report it, the dependence of the discrete Gagliardo--Nirenberg--Sobolev constants on $p$, $q$, and $r$ can be tracked explicitly in the proof of Theorem~\ref{thm:GagliardoNirenbergGeneral}. 

Finally, as a direct application of the discrete Sobolev embeddings with the asymptotics \eqref{eq:asymp} for the constants, we prove the following discrete version of the Trudinger inequality.

\begin{theorem}[Discrete Trudinger inequality]\label{thm:Trudinger1}
Let $\M$ be a mesh in the sense of {\normalfont \ref{HM a partition}--\ref{HM e cell centers}} satisfying the regularity assumptions {\normalfont \ref{HR a volumes}--\ref{HR b faces}}. Then, there exists an explicit constant $\alpha_\star>0$ depending only on  $\Omega$ and $\lambda$ such that for all $0\leq\alpha<\alpha_\star$, there is $C_\alpha>0$ which depends only on  $\Omega$, $\lambda$, and $\alpha$ such that
\begin{equation}\label{eq:MT1}
\sup_{\substack{u\in\R^\T\\\|u\|_{1,d}\leq 1}} \sum_{K\in\T}|K|\exp\left(\alpha |u_K|^{\frac{d}{d-1}}\right)\leq C_\alpha.
\end{equation}
\end{theorem}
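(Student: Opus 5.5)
The plan is Trudinger's series-expansion argument, using Theorem~\ref{thm:Sobembedding} in Case B together with the asymptotics~\eqref{eq:asymp}. Since $u^\M$ is piecewise constant, we have
\begin{equation*}
\sum_{K\in\T}|K|\exp\bigl(\alpha |u_K|^{\frac{d}{d-1}}\bigr)=\int_\Omega \exp\bigl(\alpha|u^\M|^{\frac{d}{d-1}}\bigr)\dd x .
\end{equation*}
Expanding the exponential and using monotone convergence gives
\begin{equation*}
\int_\Omega \exp\bigl(\alpha|u^\M|^{\frac{d}{d-1}}\bigr)\dd x=\sum_{k\ge0}\frac{\alpha^k}{k!}\,\|u\|_{0,\frac{dk}{d-1}}^{\frac{dk}{d-1}} .
\end{equation*}
The terms with $q_k:=dk/(d-1)<d$, i.e.\ $k<d-1$, are finitely many. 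They are bounded using \eqref{eq:Holder} and Theorem~\ref{thm:Sobembedding} with $q=d$ (or directly by the trivial case $q<p$) by $|\Omega|^{1-q_k/d}\|u\|_{0,d}^{q_k}\le |\Omega|^{1-q_k/d}$, because $\|u\|_{0,d}\le\|u\|_{1,d}\le 1$.

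For $k\ge d-1$ we have $q_k\ge d$. Case B of Theorem~\ref{thm:Sobembedding} then gives $\|u\|_{0,q_k}\le C^{\rm Sob}_{d,q_k}\|u\|_{1,d}\le C^{\rm Sob}_{d,q_k}$. By \eqref{eq:asymp} there is $Q\ge d$ and $A>0$, depending only on $\Omega$ and $\lambda$, such that $C^{\rm Sob}_{d,q}\le A q^{1-1/d}$ for all $q\ge Q$. Hence, for the indices with $q_k\ge Q$,
\begin{equation*}
\frac{\alpha^k}{k!}\|u\|_{0,q_k}^{q_k}\le \frac{\alpha^k}{k!}A^{\frac{dk}{d-1}}\Bigl(\frac{dk}{d-1}\Bigr)^{k}=\frac{\bigl(\alpha A^{\frac{d}{d-1}}\tfrac{d}{d-1}\bigr)^k k^k}{k!}.
\end{equation*}
The key point is that the exponent $1-1/d$ combined with the power $q_k=dk/(d-1)$ produces exactly $k^k$; this is precisely where the improved asymptotics are needed. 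The finitely many intermediate indices with $d\le q_k<Q$ are controlled by the (finite) constants $C^{\rm Sob}_{d,q_k}$.

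By Stirling, $k^k/k!\le e^k$, so the series converges when
\begin{equation*}
\alpha<\alpha_\star:=\frac{d-1}{d\,e}\,A^{-\frac{d}{d-1}} ,
\end{equation*}
which depends only on $\Omega$ and $\lambda$. The resulting geometric bound $\sum_k (\alpha/\alpha_\star)^k$, together with the finitely many remaining terms, gives $C_\alpha$ depending only on $\Omega$, $\lambda$ and $\alpha$. The bound is uniform in $u$ with $\|u\|_{1,d}\le1$ and in the mesh, which yields~\eqref{eq:MT1}.

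The only delicate point is making the asymptotic bound usable for all large $q$ with explicit constants, i.e.\ turning the $O_{q\to\infty}$ statement into a uniform inequality $C^{\rm Sob}_{d,q}\le A q^{1-1/d}$ for $q\ge Q$. If only the $O$-statement is available, I would instead invoke the explicit constant from Proposition~\ref{prop:preciseembedding}, which presumably has the form $C(\Omega,\lambda)\,q^{1-1/d}$ for every $q\ge d$, so that $\alpha_\star$ becomes explicit. Everything else is routine.
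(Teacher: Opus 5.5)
Your proposal is correct and follows the paper's proof. Like the paper, you expand the exponential and apply Case~B of Theorem~\ref{thm:Sobembedding} with $q=kd'$, $d'=d/(d-1)$, so that $(C^{\rm Sob}_{d,kd'})^{kd'}$ grows like $k^k$ up to a geometric factor; you then conclude via $k^k/k!\le e^k$, where the paper uses d'Alembert's ratio test.

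The point you flag as delicate is not a real obstacle: an $O_{q\to\infty}$ bound is by definition a uniform bound beyond a threshold. Moreover, the paper simply uses the explicit form \eqref{eq:Csob} from Proposition~\ref{prop:preciseembedding}, which is exactly your fallback and gives $(C^{\rm Sob}_{d,kd'})^{kd'}=C^{kd'}\diam(\Omega)^d(k+1)^{k+1}$ for all $k\ge d-1$.
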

To prove this result, we adapt at the discrete level the proof of~\cite{Trudinger67} in Section~\ref{sec: Trudinger}. In the same section, we also show an alternative form of Trudinger inequality as well as versions with zero mean value  and Dirichlet boundary conditions (see Proposition~\ref{prop:Trudinger2}, Proposition~\ref{prop:Trudinger3} and Proposition~\ref{prop:Trudinger4}).

\subsection{Discussion on the regularity assumptions}\label{sec:discussion_regularity}
In the following, we mention two ways to weaken our quasi-uniform regularity assumption~\ref{HR a volumes} and assumption~\ref{HR b faces}, which allow for local refinements of the mesh.

First, notice that in Theorems~\ref{thm:Sobembedding} to \ref{thm:Trudinger1}, with the exception of Theorem~\ref{thm:GagliardoNirenbergGeneral} (case $p\geq d$), the constants depend on $\lambda$, but not on $\mu$. Therefore, the two regularity parameters in~\ref{HR a volumes} and~\ref{HR b faces} do not play a similar role. Indeed, considering a sequence of meshes $\M_n$ with size $h_n\to 0$, in order to apply our main results uniformly in $n$ we require~\ref{HR b faces} for a fixed $\lambda>0$ while for~\ref{HR a volumes} we only require a sequence $\mu_n>0$, possibly vanishing when $n\to \infty$. As explained before, for a fixed mesh, assumption~\ref{HR a volumes} with some $\mu>0$ makes it possible to define the dual mesh and the discrete gradient. In fact, those constructions, and the aforementioned theorems, hold if we replace~\ref{HR a volumes} by
 \begin{enumerate}[label=(HR.{\alph*}')]
    \item \label{HR a prime} For all $K\in\T$, $K$ is star-shaped with respect to $x_K$ and no diamond cell is empty.\smallskip
 \end{enumerate}

Second, except for the proof of Theorem~\ref{thm:GagliardoNirenbergGeneral} (case $p\geq d$), we could also slightly weaken the global assumption~\ref{HR b faces} to the following more local assumption:
\begin{enumerate}[label=(HR.{\alph*}')]
    \setcounter{enumi}{1}
    \item \label{HR b prime} There exists $\lambda\in (0,\frac12]$ such that for all $K\in\T$, $\sigma\in\E_K$ we have $ \lambda^{d-1} h_K^{d-1} \Bdm \le |\sigma| $.\smallskip
\end{enumerate}
The proofs require some adaptation, in that all the estimates leading to the crucial potential estimate of Proposition~\ref{prop.fundineq} should be ``de-centered'' from $x_K$ to any arbitrary $x$. In particular, for $x\in K$ and $\sigma\in\Eint$ the volume of the shadow of the face $\sigma$ emanating from $x$, see \eqref{def.AKsigma}, can be estimated with local sizes around $\sigma$, independently of $h_K$. 

Finally, the classical local regularity hypotheses for discontinuous Galerkin methods (see \cite[Lemma~1.61 and the preceding discussions]{dg_book}) are shape- and contact-regularity of a submesh and uniform star-shapedness of all cells $K$ with respect to a ball  of radius uniformly comparable to $h_K$. They are stronger than~\ref{HR a prime} and~\ref{HR b prime}, so that the aforementioned theorems also hold in this setting. 
The uniform star-shapedness of all cells $K$ with respect to a ball, and not merely  \ref{HR a prime}, also plays a critical role in the proof of our main result for broken Sobolev spaces Theorem~\ref{thm:DG}, where functions are not constant anymore inside each cell.

\section{Discrete potential inequality}\label{sec: discrete potential inequality}

The purpose of this section is to prove the following inequality which constitutes one of the core contributions of the present paper.
\begin{proposition}\label{prop.fundineq}
    Let $\M$ be a mesh in the sense of {\normalfont \ref{HM a partition}--\ref{HM e cell centers}} satisfying the regularity assumptions {\normalfont \ref{HR a volumes}--\ref{HR b faces}}. Then, there exists an explicit constant $C=C(\Omega,\lambda)$ such that, for all $u\in\R^\T$,
    \begin{equation*}
        | u^\M |(x) \le C \left( \|u\|_{0,1} + \int_\Omega \frac{|\nabla^\M u|(\xi)}{|x-\xi|^{d-1}} \, \dd \xi \right), \qquad \mbox{for a.e. }x\in \Omega.
    \end{equation*}
\end{proposition}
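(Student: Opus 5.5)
We follow Trudinger's cone argument, replacing the line integral of the gradient along a ray by a telescopic sum of jumps across the faces the ray crosses.

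\textbf{Setup.} Fix $x\in\Omega$ not lying on any face and not equal to any cell center; this excludes a null set. Let $K\ni x$, so $u^\M(x)=u_K$. By the uniform cone condition there is a cone $k_\Omega(x)\subset\Omega$ with vertex $x$, height $\rho$ and fixed aperture, where $\rho$ depends only on $\Omega$. For $y\in k_\Omega(x)$, outside a null set, the segment $[x,y]$ crosses finitely many faces $\sigma_1,\dots,\sigma_m$ transversally. The cells visited are $K=K_0,K_1,\dots,K_m\ni y$, and
\begin{equation*}
|u^\M(x)|\le |u^\M(y)|+\sum_{i=1}^m |u_{K_i}-u_{K_{i-1}}|.
\end{equation*}
Average this over $y\in k_\Omega(x)$. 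Since $|k_\Omega(x)|$ depends only on $\Omega$, the first term is bounded by $C\|u\|_{0,1}$. For the second term, swap the sum and the integral:
\begin{equation*}
\frac{1}{|k_\Omega(x)|}\int_{k_\Omega(x)}\sum_{i}|u_{K_i}-u_{K_{i-1}}|\,\dd y=\frac{1}{|k_\Omega(x)|}\sum_{\sigma\in\Eint,\ \sigma\in\EKL}|u_L-u_K|\,|A_{x,\sigma}|,
\end{equation*}
where $A_{x,\sigma}=\{y\in k_\Omega(x):[x,y]\cap\sigma\neq\emptyset\}$ is the shadow of $\sigma$ seen from $x$ inside the cone.

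\textbf{Key geometric estimate.} The goal is to show
\begin{equation*}
|A_{x,\sigma}|\le C(\Omega,\lambda)\,\frac{|\sigma|}{\dd(x,\sigma)^{d-1}}\quad\text{whenever } \dd(x,\sigma)\ge c h,
\end{equation*}
and $|A_{x,\sigma}|\le C\rho^d$ in every case. To see the first bound, use polar coordinates centered at $x$. The set of directions through $\sigma$ has solid angle at most $|\sigma|/\dd(x,\sigma)^{d-1}$, and each such ray contributes at most $\rho^d/d$ of volume. This gives $|A_{x,\sigma}|\lesssim |\sigma|\,\dd(x,\sigma)^{1-d}$.

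The next step converts $|\sigma|\,|u_L-u_K|$ into an integral of $|\nabla^\M u|$ over $D_\sigma$, using $|\nabla^\M u|=|\sigma|\,|u_L-u_K|/|D_\sigma|$ on $D_\sigma$. For $\xi\in D_\sigma$ we have $|x-\xi|\le \dd(x,\sigma)+2h$. So for faces far from $x$, meaning $\dd(x,\sigma)\ge h$, we get $|x-\xi|\le 3\dd(x,\sigma)$, and
\begin{equation*}
\frac{|\sigma|\,|u_L-u_K|}{\dd(x,\sigma)^{d-1}}\le 3^{d-1}\int_{D_\sigma}\frac{|\nabla^\M u|(\xi)}{|x-\xi|^{d-1}}\,\dd\xi.
\end{equation*}
Since the diamonds $D_\sigma$ partition $\Omega$, summing over these faces gives the potential term.

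\textbf{Main obstacle: faces near $x$.} The difficulty is the faces with $\dd(x,\sigma)<h$, because the solid-angle bound then degenerates. Here we plan to use \ref{HR b faces} together with the local finiteness implied by \ref{HR b faces} and the volume comparisons of Lemma~\ref{lem:regularity}. Only a bounded number of cells, $N(\lambda,d)$, meet $B(x,2h)$: each cell has volume $\gtrsim h^d$ through its faces of size $\gtrsim h^{d-1}$, or via the paper's regularity lemma. For such a face use the trivial bound $|A_{x,\sigma}|\le|k_\Omega(x)|$. It then suffices to show
\begin{equation*}
|u_L-u_K|\lesssim \int_{D_\sigma}\frac{|\nabla^\M u|(\xi)}{|x-\xi|^{d-1}}\,\dd\xi,
\end{equation*}
which holds because $|x-\xi|\le 3h$ on $D_\sigma$ and $|D_\sigma|\lesssim h\,|\sigma|$. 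This last comparison must avoid $\mu$, since $C$ may depend only on $\Omega$ and $\lambda$. The bound $|D_\sigma|\le h|\sigma|/d$ holds for a pyramid of height at most $h$, so it is free and uses no lower bound on the diamond. The one remaining delicacy is the number of nearby faces. To avoid relying on a cell count, we would instead restrict the $y$-integration to the part of the cone at distance at least $\rho/2$ from $x$ (fixing $h\le\rho/4$ and treating large $h$ trivially). Then every face near $x$ is crossed only by rays issuing from $x$, and its shadow is still estimated by the solid-angle bound with $\dd(x,\sigma)$ replaced by the distance along the ray. Combining the two regimes yields the claimed inequality with $C=C(\Omega,\lambda)$.
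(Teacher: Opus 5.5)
Your argument is correct in substance and has the same architecture as the paper's proof: a discrete path along $[x,y]$, averaging over the cone, a shadow volume per face, and a near/far split at scale $h$. The one genuine difference is the far-face shadow estimate. The paper puts the vertex at $x_K$ and encloses $S_{K\sigma}$ in a circular cone around the projection of $x_K$ onto the convex hull $\widehat\sigma$. This gives $A_{K\sigma}\lesssim h_\sigma^{d-1}/(|\sigma|\,\dd(x_K,\widehat\sigma)^{d-1})$, which needs (HR.b) a second time to absorb $h_\sigma^{d-1}/|\sigma|$, and it has to check that $x_K\notin\widehat\sigma$ in the far regime. Your solid-angle bound $\int_\sigma |(s-x)\cdot n_\sigma|\,|s-x|^{-d}\,\dd s\le|\sigma|\,\dd(x,\sigma)^{1-d}$, valid because $\sigma$ is flat, is sharper. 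It uses no mesh regularity for far faces and avoids convex hulls, so (HR.b) is needed only for nearby faces. There your trivial bound $|A_{x,\sigma}|\le|k_\Omega(x)|$ is exactly the paper's Case~1. Using $x$ instead of $x_K$ as the vertex is immaterial.

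Your last paragraph, however, invents a difficulty and then resolves it incorrectly.

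\emph{The near-face step already closes.} For a near face,
$\int_{D_\sigma}|\nabla^\M u|(\xi)\,|x-\xi|^{1-d}\,\dd\xi\ge(3h)^{1-d}\int_{D_\sigma}|\nabla^\M u|=(3h)^{1-d}|\sigma|\,|u_L-u_K|\ge 3^{1-d}\lambda^{d-1}|B_{d-1}|\,|u_L-u_K|$.
The ingredient is (HR.b); the bound $|D_\sigma|\lesssim h|\sigma|$ plays no role, since $|D_\sigma|$ cancels. Because the diamonds are pairwise disjoint, summing this over any family of faces is bounded by $\int_\Omega|\nabla^\M u|(\xi)\,|x-\xi|^{1-d}\,\dd\xi$, exactly as for far faces. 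No count of nearby faces is needed.

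\emph{Your count is false.} (HR.b) gives no lower bound on cell volumes. Take a stack of flat triangles of thickness $\varepsilon$ whose three edges all have length comparable to $h$. It satisfies (HR.b) with fixed $\lambda$, while of order $h/\varepsilon$ of them meet $B(x,2h)$ and $\mu\lesssim\varepsilon/h$. So any such count depends on $\mu$, which the statement forbids.

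\emph{The truncation does not help.} Restricting the $y$-range does not reduce the solid angle subtended by a face at distance $\ll h$ from $x$. It therefore gives nothing beyond the trivial bound, which already suffices.

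Delete that paragraph and the proof is complete with $C=C(\Omega,\lambda)$.
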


The above inequality is reminiscent of the potential estimate developed in the continuous case which is at the heart of the original proof of Trudinger \cite{Trudinger67}. In order to prove this inequality, we first define ``discrete paths'' joining two points $x$ and $y$ such that $y\in k_\Omega(x)$. In practice, such a discrete path is a sequence of faces connecting iteratively neighboring cells that plays at the discrete level the role of the segment $[x,y]$, see Figure~\ref{fig:path}. A discrete integration along the path gives, after (standard) integration for $y\in k_\Omega(x_K)$ (see Lemma~\ref{lem:ineqAksig}),
\begin{equation*}
    |k_\Omega| |u_K| \le \int_{k_\Omega (x_K)} |u^\M| \dd x+ \sum_{\substack{\sigma\in\Eint\\ \sigma\in\EKpLp}} A_{K\sigma} \frac{|\sigma| \, |u_{K'}-u_{L'}|}{|D_\sigma|} |D_\sigma|,
\end{equation*}
where for any $K\in\T$ and $\sigma\in \Eint$,
\begin{equation}\label{def.AKsigma}
    A_{K\sigma} = \frac{|S_{K\sigma}|}{|\sigma|},\quad
\text{with}\quad
    S_{K\sigma} = \{ x_K + t (s-x_K), \, s\in \overline{\sigma}, \, t\ge 1 \}\cap \Omega,
\end{equation}
and we call $S_{K\sigma}$  
 the shadow of the face $\sigma$ emanating from $x_K$, see Figure~\ref{fig:conesecurity}. To estimate the geometric quantity $A_{K\sigma}$, we proceed in two steps: we first estimate the volume of $S_{K\sigma}$, providing a first estimate for $A_{K\sigma}$ that depends on the size of the mesh $\M$ (Lemma~\ref{lem.Aksigma}). Then, we use the regularity assumptions on the mesh to show that, for $x\in K$ and $\xi\in D_\sigma$, $A_{K\sigma} \lesssim |x-\xi|^{-(d-1)}$ independently of the mesh size (Lemma~\ref{lem.Aksigma_reg}). It then suffices to reinsert this estimate in the inequality of Lemma~\ref{lem:ineqAksig} to prove Proposition~\ref{prop.fundineq}.

We start by deriving some geometrical results on the mesh.

\subsection{Discrete paths} For any $x$, $y\in\Omega$, we define
\begin{equation*}
    C(x,y):=\{\sigma \in \E:\, \overline{\sigma}\cap [x,y] \neq \emptyset\} .
\end{equation*}
Note that if $y$ belongs to the cone $k_\Omega(x)$, then the cone condition implies that the segment $[x,y]$ is included in $\Omega$, which implies that $C(x,y)\subset \Eint$. Furthermore, the following lemma holds.

\begin{lemma}\label{lem.triangineqalongpath}
    Let $\M$ be a mesh in the sense of {\normalfont \ref{HM a partition}--\ref{HM e cell centers}}. Then, for all $K \in \T$, $x \in K$, and for almost every $y\in k_\Omega(x)$, we have
    \begin{equation}\label{ineg path}
        |u^\M|(x) \le |u^\M|(y) + \sum_{\substack{\sigma \in C(x,y)\\ \sigma\in\EKpLp}} |u_{K'}-u_{L'}|.
    \end{equation}
\end{lemma}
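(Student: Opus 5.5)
The plan is to follow the segment $[x,y]$ from $x$ to $y$, record the sequence of cells it visits, and telescope the values of $u$ along that sequence. The key preliminary step is to choose $y$ generic, so that the segment crosses the mesh skeleton only at relative interiors of faces.

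\textbf{Step 1 (genericity).} Fix $K\in\T$ and $x\in K$. Consider the set of $y\in k_\Omega(x)$ such that $[x,y]$ meets the $(d-2)$-dimensional skeleton $\bigcup_{\sigma\in\E}\partial\sigma$. The relative boundaries $\partial\sigma$ are finite unions of $(d-2)$-dimensional polyhedral pieces. Every line through $x$ hitting such a piece lies in the cone from $x$ over it, and this cone is contained in a finite union of affine subspaces of dimension at most $d-1$. Hence this set of $y$ is Lebesgue-null.

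Similarly, consider the $y$ for which $[x,y]$ contains a nondegenerate subsegment lying in some hyperplane $H_\sigma$ with $\sigma\in\E$. Such a segment lies in $H_\sigma$ only if $x\in H_\sigma$ and $y\in H_\sigma$, so these $y$ lie in a finite union of hyperplanes, again a null set. We also discard $y$ lying on $\bigcup_{\sigma}\overline\sigma$, which is null.

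For the remaining $y$, the segment $[x,y]$ is contained in $\Omega$ by the cone condition. It meets $\bigcup_{\sigma\in\E}\overline\sigma$ in finitely many points $0<t_1<\dots<t_m<1$; finiteness holds because each hyperplane meets the segment in at most one point. Each such point lies in the relative interior of a face, which is interior since $C(x,y)\subset\Eint$. The point $x$ itself may lie on a face only if $x\notin K$, which is excluded since $K$ is open.

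\textbf{Step 2 (sequence of cells).} Between consecutive parameters $t_j$, the open subsegment avoids all faces, so by \ref{HM a partition}--\ref{HM b boundaries} it lies in a single cell. This yields cells $K=K_0,K_1,\dots,K_m\ni y$.

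At the crossing point $z_j=x+t_j(y-x)$, the point $z_j$ belongs to the relative interior of a face $\sigma_j$, and a small ball around $z_j$ is split by $H_{\sigma_j}$ into two halves. Using \ref{HM c interior faces}, with faces of distinct pairs having null intersection and the genericity of Step 1, these two halves belong to exactly two neighboring cells $K',L'$ with $\sigma_j\in\EKpLp$. The segment crosses transversally from one half into the other, so $\{K_{j-1},K_j\}=\{K',L'\}$ and $\sigma_j\in C(x,y)$.

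The only point needing care is that $z_j$ lies on exactly one face. Two distinct faces meeting at a relative interior point of one of them would either be coplanar and overlapping, which is excluded by the null-intersection condition, or meet along the $(d-2)$-skeleton, which was excluded in Step 1.

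\textbf{Step 3 (telescoping).} We have $u^\M(x)=u_{K_0}$ and $u^\M(y)=u_{K_m}$, and
\[
u_{K_0}=u_{K_m}+\sum_{j=1}^m (u_{K_{j-1}}-u_{K_j}).
\]
The faces $\sigma_j$ are pairwise distinct, since the segment meets each hyperplane at most once. Taking absolute values and bounding the sum by the full sum over $C(x,y)$, whose terms are nonnegative, gives \eqref{ineg path}.

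The main obstacle is the measure-zero argument in Step 1: one must make precise that the bad set of directions from $x$ is null in $k_\Omega(x)$. This follows from the finiteness of the polyhedral data.
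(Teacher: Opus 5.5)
Your proof is correct and follows the paper's route: you discard the null set of $y$ for which the line through $x$ meets the relative boundary of a face (the paper's set $\mathcal{N}_x$), order the faces crossed by $[x,y]$ into a chain of neighboring cells from $K$ to the cell containing $y$, and telescope, with your Step~2 making explicit the transversality and the identification of the two adjacent cells, which the paper leaves implicit. One small slip is in the ``exactly one face'' claim: for $d\ge 3$, (HM.a)--(HM.e) alone do not prevent two non-coplanar faces from crossing at points interior to both, and such points need not lie in $\bigcup_{\sigma}\partial\sigma$; this is repaired by also discarding the equally null set of $y$ for which $[x,y]$ meets some $H_\sigma\cap H_{\sigma'}$ with $H_\sigma\neq H_{\sigma'}$.
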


\begin{figure}
\includegraphics[width = \textwidth]{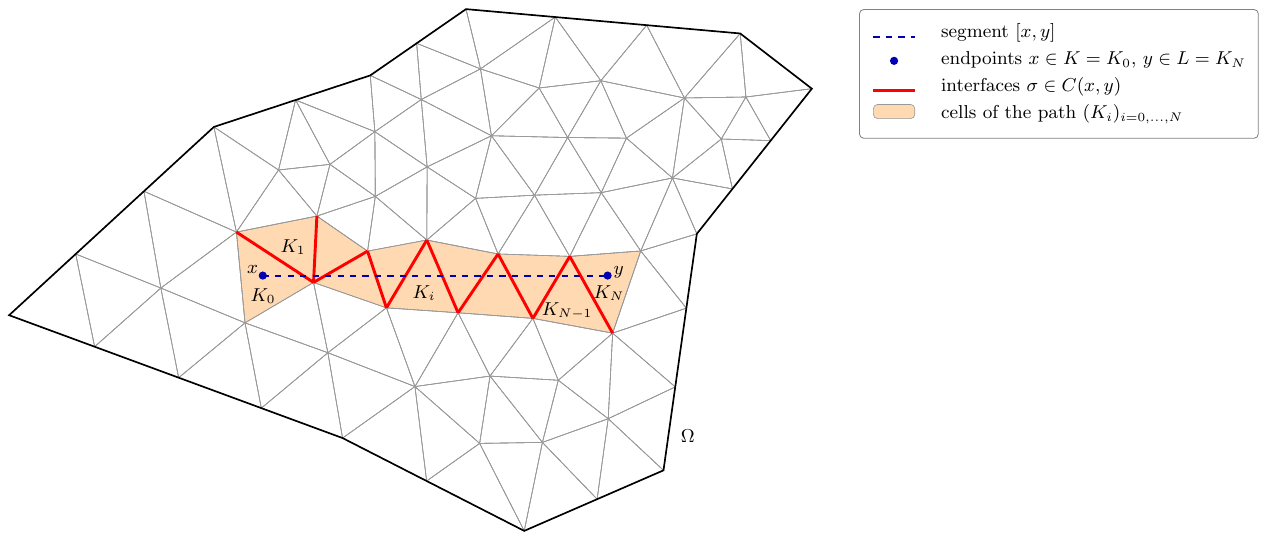}
\caption{Illustration of a discrete path between two points in a triangular mesh of the domain $\Omega$. A path $K_0,\dots,K_N$ is well defined for almost every $x,y\in\Omega$ with $y\in k_\Omega(x)$ (see proof of Lemma~\ref{lem.triangineqalongpath}).}\label{fig:path}
\end{figure}

\begin{proof}
    Let $x\in K$ and $y\in k_\Omega(x)$, with $y\in L$. The idea is to define a path in the mesh following the segment $[x,y]$, that is, a finite sequence of faces $(\sigma_i)_i\subset \Eint$, distinct, spanning $C(x,y)$ and ordered in the sense that there is a finite sequence of iteratively neighboring cells $(K_i)_i$ such that $\sigma_i\in \EKi$ for all $i$. To avoid ambiguity in the definition of the path we shall first discard the cases where $(x,y)$ intersects the boundary of the faces of the cells by showing that it almost never occurs. More precisely, let us define the set
\begin{equation*}
    \cN_x = \bigcup_{\sigma \in \Eint}  \{ x + t (z-x) \, : \, t \in \R,\,  z \in \partial\sigma\}.
\end{equation*}
Notice that $\cN_x$ is included in a finite union of $(d-1)$-dimensional affine subspaces and is thus negligible for the $d$-dimensional Lebesgue measure. Thus, for almost every $y\in k_\Omega(x)$ we have that $y\notin \cN_x$, that is, $(x,y)$ does not intersect the boundary of any cell face. We can now define a precise path.  For almost every $y\in k_\Omega(x)$, the segment $[x,y]$ intersects all elements of $C(x,y)$ which we can order in some sequence $(\sigma_i)_{i\in\{1,\dots,N(x,y)\}}$ such that $\sigma_i \in \EKi$ with $K_0 := K$ and $K_{N(x,y)} := L$. Note that the $(K_i)_i$ need not be distinct and $N(x,y)$ may be equal to $0$. With such a path, one can write
    \begin{align}
        |u_K| = \left| u_L + \sum_{i=1}^{N(x,y)} (u_{K_{i-1}}-  u_{K_{i}} )\right|&\le& |u_L| + \sum_{i=1}^{N(x,y)} |u_{K_{i-1}}-  u_{K_{i}}|\nonumber \\
        &=& |u_L| + \sum_{\substack{\sigma \in C(x,y)\\ \sigma\in\EKpLp}} |u_{K'}-u_{L'}|.\nonumber
    \end{align}
This concludes the proof of Lemma~\ref{lem.triangineqalongpath}.
\end{proof}
As a consequence, we have the following pointwise estimate.
\begin{lemma}\label{lem:ineqAksig}
Let $\M$ be a mesh in the sense of {\normalfont \ref{HM a partition}--\ref{HM e cell centers}}.  Then, for all $K\in\T$, $x\in K$,
\begin{equation*}
     |u^\M|(x) \le \frac{1}{|k_\Omega|}\int_{k_\Omega (x_K)} |u^\M|\, \dd y + \frac{1}{|k_\Omega|}\sum_{\substack{\sigma\in\Eint\\ \sigma\in\EKpLp}} A_{K\sigma} \frac{|\sigma||u_{K'}-u_{L'}|}{|D_\sigma|} |D_\sigma|,
\end{equation*}
where the $A_{K\sigma}$ are defined in equation~\eqref{def.AKsigma}.
\end{lemma}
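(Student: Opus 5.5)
Since $u^\M$ is constant on $K$, we have $|u^\M|(x)=|u_K|=|u^\M|(x_K)$ for every $x\in K$. It therefore suffices to prove the inequality at the cell center $x_K$. The plan is to apply Lemma~\ref{lem.triangineqalongpath} with $x_K$ in place of $x$ (allowed since $x_K\in K$ by \ref{HM e cell centers}), average over $y$ in the cone $k_\Omega(x_K)$, and bound the resulting geometric weights by the shadows $S_{K\sigma}$.

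\emph{Integration over the cone.} For almost every $y\in k_\Omega(x_K)$, Lemma~\ref{lem.triangineqalongpath} gives
\begin{equation*}
|u_K|\le |u^\M|(y)+\sum_{\substack{\sigma\in C(x_K,y)\\ \sigma\in\EKpLp}}|u_{K'}-u_{L'}|.
\end{equation*}
By the cone condition, $[x_K,y]\subset\Omega$, so $C(x_K,y)\subset\Eint$. We can then rewrite the sum as a sum over all $\sigma\in\Eint$ weighted by the indicator $\mathbf 1_{\{\sigma\in C(x_K,y)\}}$. Integrating in $y$ over $k_\Omega(x_K)$, whose measure is $|k_\Omega|$ since the cones are congruent, and exchanging the finite sum with the integral, we get
\begin{equation*}
|k_\Omega|\,|u_K|\le \int_{k_\Omega(x_K)}|u^\M|\,\dd y+\sum_{\substack{\sigma\in\Eint\\ \sigma\in\EKpLp}}|u_{K'}-u_{L'}|\,\bigl|\{y\in k_\Omega(x_K):\ \overline\sigma\cap[x_K,y]\neq\emptyset\}\bigr|.
\end{equation*}
Each of these sets is measurable, being a closed set intersected with an open cone.

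\emph{Geometric step.} The key point is to show that
\begin{equation*}
\{y\in k_\Omega(x_K):\ \overline\sigma\cap[x_K,y]\neq\emptyset\}\subset S_{K\sigma}.
\end{equation*}
Suppose $s\in\overline\sigma\cap[x_K,y]$. Since $x_K\in K$ is open, $x_K\notin\overline\sigma$, so $s\neq x_K$. Writing $s=x_K+\tau(y-x_K)$ with $\tau\in(0,1]$, we obtain $y=x_K+t(s-x_K)$ with $t=1/\tau\ge1$. Moreover $y\in k_\Omega(x_K)\subset\Omega$, hence $y\in S_{K\sigma}$ by \eqref{def.AKsigma}. Consequently the measure of this set is at most $|S_{K\sigma}|=A_{K\sigma}|\sigma|$.

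\emph{Conclusion.} Writing $A_{K\sigma}|\sigma||u_{K'}-u_{L'}|=A_{K\sigma}\frac{|\sigma||u_{K'}-u_{L'}|}{|D_\sigma|}|D_\sigma|$ and dividing by $|k_\Omega|$ yields the claimed estimate. The only mildly delicate point is this geometric inclusion, in particular making sure $x_K\notin\overline\sigma$ so that the scaling parameter $t$ is well defined; the rest is routine.
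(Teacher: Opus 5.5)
Your proof is correct and follows the paper's argument: you apply Lemma~\ref{lem.triangineqalongpath} at $x_K$, integrate over $k_\Omega(x_K)$, and bound the measure of $\{y\in k_\Omega(x_K):\ \sigma\in C(x_K,y)\}$ by $|S_{K\sigma}|$; the paper identifies this measure with $|S_{K\sigma}\cap k_\Omega(x_K)|$ and you prove the needed inclusion explicitly. One small point: for faces $\sigma\notin\E_K$, the fact that $x_K\notin\overline\sigma$ does not follow from the openness of $K$ alone, but from $\overline\sigma\subset\partial K'$ for some cell $K'\neq K$ together with the pairwise disjointness of the open cells.
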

\begin{proof}
Let $K\in\T$. From the inequality~\eqref{ineg path} of Lemma~\ref{lem.triangineqalongpath} with $x = x_K$, then integrating over $y\in k_\Omega(x_K)$, one has
\begin{align*}
    |k_\Omega| |u_K| &\le \int_{k_\Omega (x_K)} |u^\M|\, \dd y + \int_{k_\Omega (x_K)}\sum_{\substack{\sigma \in C(x_K,y)\\ \sigma\in\EKpLp}} |u_{K'}-u_{L'}|\, \dd y \\
    &= \int_{k_\Omega (x_K)} |u^\M|\, \dd y + \sum_{\substack{\sigma\in\Eint\\ \sigma\in\EKpLp}}|S_{K\sigma} \cap k_\Omega(x_K)| |u_{K'}-u_{L'}|\\
    & \le \int_{k_\Omega(x_K)} \left|u^{\M}\right| \, \dd y + \sum_{\substack{\sigma\in\Eint\\ \sigma\in\EKpLp}}|S_{K\sigma}| |u_{K'}-u_{L'}|,
\end{align*}
which leads to the desired inequality.
\end{proof}

\subsection{Estimate of the shadow of a face}\label{sec:constructions}

In all this section, $K\in\T$ and $\sigma\in \Eint$ are fixed. The goal of this section is to evaluate $A_{K\sigma}$ defined in \eqref{def.AKsigma}. For this purpose, let us first define some notation, see also Figure~\ref{fig:conesecurity}. First, we define,
\[
\widehat{\sigma} = \overline{\mathrm{Conv(\sigma)}},
\]
the closure of the convex hull of $\sigma$. In all this section we suppose that $\dd(x_K,\widehat{\sigma})>0$.
\begin{remark} 
    In dimension $d\ge3$, the assumption that $\dd(x_K,\widehat{\sigma})>0$ is not always satisfied for general regular meshes: see Appendix~\ref{sec:appendix} for an example with $x_K \in \widehat{\sigma}$. This assumption is critical for the constructions in this section but we will see in the following section (Lemma~\ref{lem.Aksigma_reg}) that when $x_K \in \widehat{\sigma}$, $A_{K\sigma}$ can be estimated in a fairly direct way.
\end{remark}
Note that $h_\sigma = \diam(\widehat{\sigma})$. By convexity and compactness of $\widehat{\sigma}$ we can define,
\begin{equation*}
    x_{K{\sigma}} = \underset{y\in \widehat{\sigma} } \argmin \, |x_K-y| \in \widehat{\sigma}.
\end{equation*}

\begin{figure}
\includegraphics[width = \textwidth]{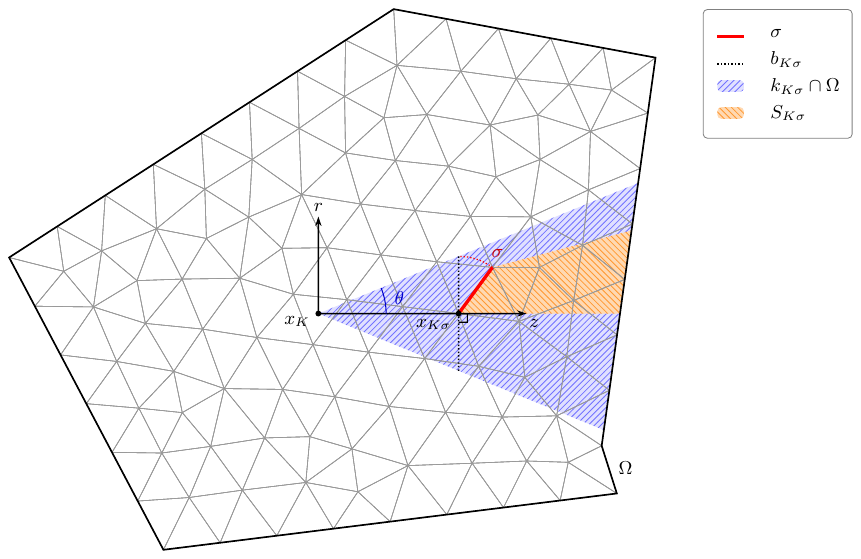}
\caption{Illustration for $d=2$ of the geometric objects defining the security cone $k_{K\sigma}$ used to estimate the volume of the shadow set $S_{K\sigma}$.\label{fig:conesecurity}}
\end{figure}

When $d\ge3$, we write $(z,r,\phi_1,\dots,\phi_{d-2})\in \R\times\R_+\times[0,\pi]^{d-3}\times[0,2\pi]$ the hypercylindrical coordinates in $\R^d$, centered in $x_K$, where $z$ is the coordinate along the $(x_K,x_{K {\sigma}})$ line and $(r,\phi_1,\dots,\phi_{d-2})$ are the $d-1$ spherical coordinates in the hyperplane orthogonal to $(x_K,x_{K {\sigma}})$. In order to unify notations, when $d=2$ we use the ``cylindrical'' coordinates $(z,r,\phi_1)\in \R\times\R_+\times\{\pm 1\}$, where $z$ is the coordinate along the $(x_K,x_{K\sigma})$ line and $r\phi_1$ the coordinate along the orthogonal axis. We then use the system of coordinates $(z,r,\phi_1,\dots,\phi_{d'})$ where $d'=1$ if $d=2$, and $d'=d-2$ if $d\ge3$. In this system of coordinates, by definition of $x_{K\sigma}$ we have $x_{K\sigma} = (z_{K\sigma},0,0, \dots, 0)$ with $z_{K\sigma}=\dd(x_K,\widehat{\sigma})$.
We now define the angle
\begin{equation*}
 \theta = \arctan \frac{ h_\sigma}{\dd(x_K,\widehat{\sigma})}\in \left(0,\frac\pi2\right),
\end{equation*}
the $(d-1)$-dimensional ball,
\[
b_{K\sigma} = \{y\in\mathbb{R}^d:\, z=z_{K\sigma} \text{ and } r< h_\sigma \},
\]
and finally the cone,
\begin{align*}
    k_{K\sigma} & = \{x_K + t(b-x_K):\  t>0\ \text{and}\  b\in b_{K\sigma}\}\\[.3em]
    & = \{(z,r,\phi_1,\dots,\phi_{d'}): \  z>0 \ \text{and}\  r < z \tan \theta\}.
\end{align*}

Endowed with these notations, we can evaluate $A_{K\sigma}$ as follows.

\begin{lemma}\label{lem.Aksigma}
Let $\M$ be a mesh in the sense of {\normalfont \ref{HM a partition}--\ref{HM e cell centers}}. Then, there exists an explicit constant $C_0>0$ that depends only on $d$, such that for any $K\in\T$ and $\sigma\in \Eint$ with $x_K\notin \widehat{\sigma}$ we have
\begin{equation*}
      A_{K\sigma} \le C_0\frac{\diam(\Omega)^d}{|\sigma|}\,\left(\dfrac{ h_\sigma}{\dd(x_K,\widehat{\sigma})}\right)^{d-1}.
\end{equation*}
\end{lemma}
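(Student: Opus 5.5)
We bound the volume of the shadow $S_{K\sigma}$ by a truncated piece of the cone $k_{K\sigma}$, and then divide by $|\sigma|$.

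\textbf{Step 1: the shadow lies in the cone.} We claim that $S_{K\sigma}\subset \overline{k_{K\sigma}}$. Let $s\in\overline\sigma\subset\widehat\sigma$, and write $s=(z_s,r_s,\dots)$ in the hypercylindrical coordinates centered at $x_K$.

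Since $x_{K\sigma}$ is the orthogonal projection of $x_K$ onto the convex set $\widehat\sigma$, the variational inequality gives $(s-x_{K\sigma})\cdot(x_{K\sigma}-x_K)\ge 0$. Hence $z_s\ge z_{K\sigma}=\dd(x_K,\widehat\sigma)>0$.

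The point $x_{K\sigma}$ also lies in $\widehat\sigma$, whose diameter is $h_\sigma$. Therefore
\[
r_s\le |s-x_{K\sigma}|\le h_\sigma=z_{K\sigma}\tan\theta\le z_s\tan\theta .
\]
So $s\in\overline{k_{K\sigma}}$. The cone is invariant under the dilations $x_K+t(\cdot-x_K)$ with $t\ge1$, so every point $x_K+t(s-x_K)$ with $t\ge 1$ also lies in $\overline{k_{K\sigma}}$.

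\textbf{Step 2: truncation to a ball.} Since $x_K\in\Omega$, every point of $S_{K\sigma}\subset\Omega$ is at distance at most $\diam(\Omega)$ from $x_K$. Combining with Step 1,
\[
S_{K\sigma}\subset \overline{k_{K\sigma}}\cap \overline{B(x_K,\diam(\Omega))}\subset\{(z,r,\dots):\ 0< z\le \diam(\Omega),\ r\le z\tan\theta\},
\]
up to a null set.

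\textbf{Step 3: volume of the truncated cone.} Integrating in the $z$ variable,
\[
|S_{K\sigma}|\le \int_0^{\diam(\Omega)} |B_{d-1}|\,(z\tan\theta)^{d-1}\,\dd z=\frac{|B_{d-1}|}{d}\,\diam(\Omega)^d\left(\frac{h_\sigma}{\dd(x_K,\widehat\sigma)}\right)^{d-1}.
\]
Dividing by $|\sigma|$ gives the claim, with the explicit constant $C_0=|B_{d-1}|/d$, which depends only on $d$.

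The only step that needs care is Step 1, namely the claim that the whole face lies on the far side $z\ge z_{K\sigma}$ of the orthogonal hyperplane. This is exactly where we use the convexification $\widehat\sigma$ together with the minimizing property of $x_{K\sigma}$.
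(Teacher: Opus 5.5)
Your proof is correct and follows essentially the same route as the paper. Both arguments show $S_{K\sigma}\subset\overline{k_{K\sigma}}$ via the projection inequality ($z_s\ge z_{K\sigma}$) and $r_s\le h_\sigma$, truncate at height $\diam(\Omega)$, and integrate the $(d-1)$-dimensional cross-sections to obtain $C_0=|B_{d-1}|/d$; you also make explicit two steps the paper leaves implicit, namely $h_\sigma=z_{K\sigma}\tan\theta\le z_s\tan\theta$ and the invariance of the cone under the dilations with $t\ge 1$.
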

\begin{proof}
Observe that if $s = (z_s, r_s,\dots)\in\overline{\sigma}$ then, since $x_{K\sigma}$ is the projection of $x_K$ onto the closed and convex domain $\widehat{\sigma}$, one has $z_s\geq z_{K\sigma}$. Moreover, $|s-x_{K\sigma}|\leq \diam(\widehat{\sigma}) = h_\sigma$, hence $r_s\leq h_\sigma$. In particular, $s\in \overline{k_{K\sigma}}$, and thus $S_{K\sigma} \subset \overline{k_{K\sigma}}\cap\Omega$. Therefore, we can evaluate
    \begin{align*}
      |S_{K\sigma}| \le |k_{K\sigma}\cap\Omega|  &\leq  |\{z\in (0, \diam(\Omega)) \text{ and } r < z \tan \theta \}|\\ &= \int_{0}^{\diam(\Omega)} |B_{d-1}| \left(z \tan\theta\right)^{d-1} \dd z \\
      & =   \frac{|B_{d-1}|} d \diam(\Omega)^d (\tan\theta)^{d-1},
    \end{align*}
    which completes the proof of the desired estimate with $C_0 = d^{-1} |B_{d-1}|$.
\end{proof}

\subsection{Use of the regularity assumptions and proof of Proposition~\ref{prop.fundineq}}
Building on the previous section, we can now use the regularity of the mesh to estimate $A_{K\sigma}$ in terms of the continuous variables.
\begin{lemma}\label{lem.Aksigma_reg} Let $\M$ be a mesh in the sense of {\normalfont \ref{HM a partition}--\ref{HM e cell centers}} satisfying the regularity assumptions {\normalfont \ref{HR a volumes}--\ref{HR b faces}}.
Then, there exists an explicit constant $C_1>0$ that depends only on $d$, such that for any $K\in\T$ and $\sigma\in \Eint$ and for any $x\in K$ and a.e. $\xi\in D_\sigma$ we have
\begin{equation}\label{eq.Aksigma_reg_2}
      A_{K\sigma} \le \frac{C_1\diam(\Omega)^d}{\lambda^{d-1}|x-\xi|^{d-1}}.
\end{equation}
\end{lemma}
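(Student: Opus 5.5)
We will bound $A_{K\sigma}$ in two regimes, according to whether $|x-\xi|$ is large or comparable to the mesh size $h$. In both regimes the target bound $C_1\diam(\Omega)^d\lambda^{1-d}|x-\xi|^{1-d}$ will be reduced to a bound of the form $A_{K\sigma}\lesssim \diam(\Omega)^d/(\lambda^{d-1}\rho^{d-1})$, where $\rho$ is a length with $|x-\xi|\lesssim\rho$.

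\emph{Trivial bound.} Since $S_{K\sigma}\subset\Omega$ and $\Omega\subset B(z,\diam(\Omega))$ for any $z\in\Omega$, we have $|S_{K\sigma}|\le\Bd\diam(\Omega)^d$. Combining this with \ref{HR b faces} gives
\[
A_{K\sigma}\le \frac{\Bd\diam(\Omega)^d}{\Bdm\lambda^{d-1}h^{d-1}}.
\]
This covers every pair $(x,\xi)$ with $|x-\xi|\le c\,h$ for a dimensional constant $c$. It also covers the case $x_K\in\widehat\sigma$, as the preceding remark anticipated. In that case $\sigma\subset\partial L$ (or $\partial K$) and $x_K\in\widehat\sigma$, so both $x\in K$ and $\xi\in D_\sigma$ lie within distance $h_K+h_\sigma\le 2h$ (or $3h$) of $x_K$. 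Hence $|x-\xi|\le 4h$, and the trivial bound already gives \eqref{eq.Aksigma_reg_2}.

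\emph{Case $x_K\notin\widehat\sigma$.} Here we use Lemma~\ref{lem.Aksigma} together with $|\sigma|\ge\Bdm\lambda^{d-1}h^{d-1}$ and $h_\sigma\le h$:
\[
A_{K\sigma}\le C_0\frac{\diam(\Omega)^d}{\Bdm\lambda^{d-1}h^{d-1}}\Big(\frac{h}{\dd(x_K,\widehat\sigma)}\Big)^{d-1}
=\frac{C_0}{\Bdm}\frac{\diam(\Omega)^d}{\lambda^{d-1}\dd(x_K,\widehat\sigma)^{d-1}}.
\]
We therefore need $|x-\xi|\lesssim \max(h,\dd(x_K,\widehat\sigma))$, and then take the better of this bound and the trivial one. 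Because $x\in K$, we have $|x-x_K|\le h$. Because $\xi\in D_\sigma\subset\overline{P_{K\sigma}\cup P_{L\sigma}}$, the point $\xi$ lies in the convex hull of $\{x_K,x_L\}\cup\sigma$, whose points are within $2h$ of $\widehat\sigma$ (as $|x_L-s|\le h$ for $s\in\sigma$). Consequently
\[
|x-\xi|\le |x-x_K|+\dd(x_K,\widehat\sigma)+h_\sigma+2h\le \dd(x_K,\widehat\sigma)+4h.
\]
If $\dd(x_K,\widehat\sigma)\ge h$, then $|x-\xi|\le5\,\dd(x_K,\widehat\sigma)$ and the Lemma~\ref{lem.Aksigma} bound gives \eqref{eq.Aksigma_reg_2} with a factor $5^{d-1}$. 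If instead $\dd(x_K,\widehat\sigma)<h$, then $|x-\xi|\le5h$ and the trivial bound concludes.

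Collecting the cases yields $C_1=5^{d-1}\max(C_0,\Bd)/\Bdm$, which depends only on $d$. The only delicate step is checking that $\xi$ stays within $O(h)$ of $\widehat\sigma$ for a.e. $\xi\in D_\sigma$, including for nonconvex cells. This relies solely on the pyramid description of $D_\sigma$ and the diameter bounds, so I expect no real obstacle. The parameter $\mu$ is not used, consistent with the statement.
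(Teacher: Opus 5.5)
Your argument is correct and is essentially the paper's: the crude bound $A_{K\sigma}\le|\Omega|/|\sigma|$ with \ref{HR b faces} when $|x-\xi|=O(h)$, and Lemma~\ref{lem.Aksigma} with $|x-\xi|\le\dd(x_K,\widehat\sigma)+O(h)$ otherwise. The paper splits directly on $|x-\xi|\le 4h$, which makes $x_K\notin\widehat\sigma$ automatic in the far case rather than a separate case. One notational slip: $\sigma$ need not be a face of $K$, so you should write $\xi\in\overline{P_{K'\sigma}\cup P_{L'\sigma}}$ with $\sigma\in\EKpLp$, and it is $x_{K'},x_{L'}$ (not $x_K$) that lie within $h$ of $\widehat\sigma$; the estimate is unaffected.
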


\begin{proof}We split the proof in two cases.\\
\noindent\emph{Case 1: $|\xi-x|\le 4 h$.}
Using first the definition of $A_{K\sigma}$ in~\eqref{def.AKsigma}, the regularity assumption {\normalfont \ref{HR b faces}}, and the isodiametric inequality, we estimate $A_{K\sigma}$ as
\begin{equation}
    A_{K\sigma} \le \frac{|\Omega|}{|\sigma|} \le \frac{|\Omega|}{\lambda^{d-1}h^{d-1}|B_{d-1}|} \le \frac{\diam(\Omega)^d \Bd}{2^d \lambda^{d-1}h^{d-1}\Bdm}\le \frac{2^{d-2} \diam(\Omega)^d \Bd}{\lambda^{d-1}\Bdm}\frac{1}{|x-\xi|^{d-1}},
\end{equation}
which proves the desired inequality when $|\xi-x|\le 4 h$.
\medskip

\noindent\emph{Case 2: $|\xi-x| > 4 h$.} Let us call $L$ a cell such that $\xi \in \overline{P_{L\sigma}}$.
First, observe that
\begin{equation*}
    |x-\xi| \le |x-x_K| + |x_K-x_{K\sigma}| + |x_{K\sigma}-\xi| \le  \dd(x_K,\widehat{\sigma}) + 2h \le \dd(x_K,\widehat{\sigma}) + \frac12 |x-\xi|,
\end{equation*}
where we used that $\overline{P_{L\sigma}}\cup \widehat{\sigma}$ is a subset of the convex hull of $\overline{L}$, whose diameter is equal to $\diam(L)\le h$, to control $|x_{K\sigma}-\xi|$. As a consequence,
\begin{equation}\label{eq.distfar}
\dd(x_K,\widehat{\sigma}) \ge \frac12 |x-\xi| >0.
\end{equation}
In particular, $x_K\notin\widehat{\sigma}$ and we can use the constructions of Section~\ref{sec:constructions}. From there, combining Lemma~\ref{lem.Aksigma} and the regularity assumption {\normalfont \ref{HR b faces}}, we get
\begin{equation*}
      A_{K\sigma} \le \frac{C_0 \diam(\Omega)^d}{\lambda^{d-1}h^{d-1}|B_{d-1}|}\,\left(\dfrac{ h}{\dd(x_K,\widehat{\sigma})}\right)^{d-1}\le \frac{2^{d-1} C_0 \diam(\Omega)^d}{\lambda^{d-1}|B_{d-1}|}\,\dfrac{ 1}{|x-\xi|^{d-1}},
\end{equation*}
which concludes the proof of the lemma with $C_1 = \max(2^{d-2}|B_d|/|B_{d-1}|, 2^{d-1}/d)$ .
\end{proof}
Now we have all the elements to prove Proposition~\ref{prop.fundineq}.

\begin{proof}[Proof of Proposition~\ref{prop.fundineq}]
Combining Lemma~\ref{lem:ineqAksig} and inequality \eqref{eq.Aksigma_reg_2}, one has 
\begin{align*}
    |u_K| &\le \frac{1}{|k_\Omega|} \int_{k_\Omega(x_K)} \left|u^{\M}\right| \, \dd y  + \frac{1}{|k_\Omega|} \sum_{\substack{\sigma\in\Eint\\ \sigma\in\EKpLp}} A_{K\sigma} \, |\sigma| \frac{|u_{K'}-u_{L'}|}{|D_\sigma|} |D_\sigma|\\
    &= \frac{1}{|k_\Omega|} \int_{k_\Omega(x_K)} \left|u^{\M}\right| \, \dd y  +  \frac{1}{|k_\Omega|} \int_\Omega \sum_{\sigma\in\Eint} A_{K\sigma} |\nabla^\M u|(\xi) {\bf 1}_{D_\sigma}(\xi) \dd \xi\\
    &\leq\frac{1}{|k_\Omega|} \int_{k_\Omega(x_K)} \left|u^{\M}\right| \, \dd y  + \frac{C_1\diam(\Omega)^d}{\lambda^{d-1} \, |k_\Omega|} \int_\Omega \sum_{\sigma\in\Eint}\frac{|\nabla^\M u|(\xi)}{|x-\xi|^{d-1}}{\bf 1}_{D_\sigma}(\xi) \dd \xi,
\end{align*}
for any $x\in K$, which leads to the  claim with $C = |k_\Omega|^{-1}\max(1, C_1\diam(\Omega)^d\lambda^{1-d})$.
\end{proof}

\section{Proofs of the main results}\label{sec: proofs main results}

\subsection{Sobolev embeddings: proof of Theorem~\ref{thm:Sobembedding}}\label{sec: proof Sob embeddings}

In the next proposition, we prove a more precise version of \eqref{eq:Sobemb}. 

\begin{proposition}\label{prop:preciseembedding}
 Let $\M$, $p$ and $q$ be as in Theorem~\ref{thm:Sobembedding}. Then, for all $u\in\R^\T$, the inequality 
\begin{equation}\label{eq:preciseemb1}
\|u\|_{0,q}\leq C\left(|\Omega|^{\frac{1}{q}}\|u\|_{0,1}  + \diam(\Omega)^{1+\frac{d}{q} -\frac{d}{p}}\Lambda_{p,q}|u|_{1,{p}}\right),
\end{equation}
holds with 
\begin{equation}\label{eq:constantSob}
\Lambda_{p,q} = \left\{\begin{aligned}
&\left(\frac{1+\frac1q - \frac1{p}}{\frac1d +\frac1q-\frac1{p}}\right)^{1+\frac1q-\frac1{p}}&&\text{if }\ \frac1d +\frac1q-\frac1{p}>0,\\[.5em]
&\left(\frac{p}{p-1}\right)^{1-\frac1d} + q^{1-\frac1d}&&\text{if }\ \frac1d +\frac1q-\frac1{p}=0\text{ and }p>1,\\
& \qquad\qquad1&&\text{if }\ \frac1d +\frac1q-\frac1{p}=0\text{ and }p=1.\\
\end{aligned}\right.
\end{equation}
and where $C$ depends only on $\Omega$ and $\lambda$.
\end{proposition}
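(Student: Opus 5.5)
The plan is to start from the discrete potential inequality of Proposition~\ref{prop.fundineq},
\[
|u^\M|(x) \le C\Big(\|u\|_{0,1} + V f(x)\Big),\qquad Vf(x) := \int_\Omega \frac{f(\xi)}{|x-\xi|^{d-1}}\,\dd\xi,\quad f=|\nabla^\M u|,
\]
take the $L^q(\Omega)$ norm, and bound the Riesz-type operator $V$ from $L^p(\Omega)$ to $L^q(\Omega)$ with an explicit constant, following the weakly singular operator lemma of Gilbarg--Trudinger (Lemma 7.12 there). The first term contributes $|\Omega|^{1/q}\|u\|_{0,1}$ directly, so everything reduces to the estimate $\|Vf\|_{L^q}\le C\,\diam(\Omega)^{1+d/q-d/p}\Lambda_{p,q}\|f\|_{L^p}$, where $f$ is simply a nonnegative $L^p$ function. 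This is pure continuous harmonic analysis, and the mesh plays no further role.

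For the main case $\delta := \frac1d+\frac1q-\frac1p>0$, I set $r$ by $\frac1r = 1+\frac1q-\frac1p\in[\frac1q,1]$ and write the kernel as $|x-\xi|^{-(d-1)} = |x-\xi|^{-(d-1)r(1-1/p)}\cdot|x-\xi|^{-(d-1)r/q}\cdot$, with the factor split so that Hölder with exponents $q$, $p q/(q-p)$ (or suitable conjugates) gives
\[
|Vf(x)| \le \Big(\int \tfrac{f^p}{|x-\xi|^{(d-1)r}}\Big)^{1/q}\Big(\int f^p\Big)^{\frac1p-\frac1q}\Big(\int\tfrac{1}{|x-\xi|^{(d-1)r}}\Big)^{1-\frac1p}.
\]
This is the standard three-factor Hölder splitting. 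Next I use $\sup_x\int_\Omega |x-\xi|^{-(d-1)r}\dd\xi \le \Bd d\int_0^{\diam\Omega}\rho^{d-1-(d-1)r}\dd\rho = \Bd d\,\diam(\Omega)^{d-(d-1)r}/(d-(d-1)r)$, which is finite precisely when $(d-1)r<d$, i.e.\ $\delta>0$. Integrating the $q$-th power in $x$ and applying Fubini gives $\|Vf\|_q\le \big(\sup\int|x-\xi|^{-(d-1)r}\big)^{1/r}\|f\|_p$. Since $d-(d-1)r = r d\,\delta$, the constant is $(\Bd\,\diam(\Omega)^{rd\delta}/(r\delta))^{1/r}$. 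This produces exactly $(r\delta)^{-1/r}=\big(\frac{1+1/q-1/p}{\delta}\big)^{1+1/q-1/p}=\Lambda_{p,q}$ and the power $\diam(\Omega)^{d\delta}=\diam(\Omega)^{1+d/q-d/p}$, with $\Bd^{1/r}\le\max(1,\Bd)$ absorbed into $C$. For $q=\infty$ I use the same computation with Hölder alone.

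For the borderline $\delta=0$ (so $q=p^*$ with $p<d$ in the statement, or formally $q=\infty$, $p=d$, which is not included, and similarly $p=1,q=d/(d-1)$), the kernel is no longer integrable at exponent $r$, and this is the main obstacle. For $p=1$, $q=d/(d-1)$, I would use Minkowski's integral inequality together with the weak-type bound. Here a cheaper route is probably available: since the mesh function is piecewise constant, I expect the authors instead interpolate. My first try for $1<p<d$, $q=p^*$ is to pick $\tilde p<p$ slightly, or equivalently to apply the $\delta>0$ bound with a shifted $q$, then optimize. Honestly, the cleaner path is the Hardy--Littlewood--Sobolev route via Hedberg's trick. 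I split $Vf(x)$ at radius $\rho$, bounding the near part by $C\rho\,Mf(x)$ and the far part by Hölder, $C\rho^{1-d/p}\|f\|_p(p'/(\cdot))^{1/p'}$. Optimizing $\rho$ and using the maximal inequality $\|Mf\|_p\le C\frac{p}{p-1}\|f\|_p$ then yields a constant of the form $(p/(p-1))^{1-1/d}$. The additional $q^{1-1/d}$ term in \eqref{eq:constantSob} comes from the Hölder factor $(p')^{1/p'}$ with $p'=\dots$ comparable to $q/d$. For $p=1$ I would use the weak-type estimate, trading it for a strong one by the standard slicing argument, giving constant $1$. I would expect this borderline step, and tracking constants to exactly $\Lambda_{p,q}$, to require the most care. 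The final inequality \eqref{eq:preciseemb1} then follows by collecting the two terms.
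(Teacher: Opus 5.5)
For $\delta=\frac1d+\frac1q-\frac1p>0$ your argument is the paper's argument. The paper also inserts the discrete potential inequality and bounds the truncated Riesz potential by Young's convolution inequality with $w=|\cdot|^{1-d}\mathbf{1}_{\{|\cdot|\le\diam(\Omega)\}}\in L^r$, where $\frac1r=1+\frac1q-\frac1p$. Your three-factor H\"older splitting is just the proof of that Young inequality. Your value $\bigl(|B_d|\,\diam(\Omega)^{rd\delta}/(r\delta)\bigr)^{1/r}$ is exactly $\|w\|_{L^r}$, which gives $\Lambda_{p,q}$ together with the power $\diam(\Omega)^{1+\frac dq-\frac dp}$. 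The only cosmetic difference is that the paper tests against $f\in L^{q'}$ and then specializes $f$, whereas you take the $L^q$ norm of the pointwise bound directly. This part already contains the two asymptotics that drive Morrey and Trudinger, since $p>d,\ q=\infty$ and $p=d,\ q\to\infty$ both have $\delta>0$.

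At the endpoint $q=p^*$ you depart from the paper. For $1<p<d$ the paper simply quotes the explicit Lieb--Loss bound on the sharp Hardy--Littlewood--Sobolev constant. For $p=1$ it bypasses the potential estimate: $|u|_{1,1}=\sum_\sigma|\sigma|\,|u_L-u_K|$ is exactly the total variation of $u^\M$, so the embedding $BV(\Omega)\subset L^{d/(d-1)}(\Omega)$ applies directly.

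\emph{Case $1<p<d$.} Your Hedberg route is a viable, self-contained alternative. With $f$ extended by zero, the near part is at most $2^d|B_d|\,\rho\,Mf(x)$, and the tail is at most $\|f\|_p\bigl(\frac{d|B_d|(p-1)}{d-p}\bigr)^{1/p'}\rho^{1-d/p}$. Optimizing in $\rho$ and using $\|Mf\|_p\le C_d\,p'\|f\|_p$ gives the constant $C_d\,(p')^{1-p/d}\bigl(\frac{p-1}{d-p}\bigr)^{(p-1)/d}$, which one checks is at most $C_d\Lambda_{p,p^*}$. Your explanation of the $q^{1-1/d}$ term is wrong, though. The factor $(p')^{1/p'}$ is bounded, and $p'\to d/(d-1)$ as $p\to d$, so $p'$ is not comparable to $q/d$. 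The blow-up comes from the denominator $(d-1)p'-d=p'(d-p)/p$ of the tail integral. Since $\frac{p-1}{d-p}=\frac{q}{dp'}$, and this factor enters with exponent $\frac{p-1}{d}\le 1-\frac1d$ after the optimization, it produces $q^{1-1/d}$.

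\emph{Case $p=1$.} Drop the Minkowski idea: $|z|^{1-d}\notin L^{d/(d-1)}_{\mathrm{loc}}$, and the potential operator is not bounded from $L^1$ into $L^{d/(d-1)}$. Likewise, shifting $p$ or $q$ cannot reach the endpoint, since $\Lambda_{p,q}\to\infty$ as $\delta\to0$. Slicing does work, applied to the truncated mesh functions $T_k u$ with $T_k(t)=\min(\max(|t|-2^k,0),2^k)$, $k\in\mathbb{Z}$. But it requires the discrete coarea inequality $\sum_{k}|T_ku|_{1,1}\le|u|_{1,1}$, which you do not mention. It holds because for $p=1$ the seminorm is a weighted sum of jumps, and each jump $|u_L-u_K|$ splits over the disjoint level ranges $(2^k,2^{k+1})$. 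That inequality is exactly the statement that $|u|_{1,1}$ is the total variation of $u^\M$. Once you use it, the paper's one-line $BV$ argument is the shorter path.
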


\begin{remark}[Asymptotics of $\Lambda_{p,q}$] Let us underline that the constant $\Lambda_{p,q}$ is not meant to be optimal for the whole range of $p,q$ considered in Theorem~\ref{thm:Sobembedding}, and merely yields an upper bound. For instance the blow-up of $\Lambda_{p,p^*}$ as $p\to1$ is an artifact of the proof (based on the Hardy--Littlewood--Sobolev inequality for this part). It has however the optimal order of blow-up (that of the continuous constants \eqref{Constant Sobolev asymptotic}) near $(p^{-1},q^{-1}) = (d^{-1},0)$, which is the crucial part to derive the Trudinger inequality.
\end{remark}
\begin{proof}
Let $\M$ be a mesh in the sense of {\normalfont \ref{HM a partition}--\ref{HM e cell centers}} satisfying the regularity assumptions {\normalfont \ref{HR a volumes}--\ref{HR b faces}}. 

\smallskip

\noindent\emph{Step 1:} We start with Cases A, B, and C of Theorem~\ref{thm:Sobembedding}, except for $q = pd/(d-p)$ with $p<d$ of Case C which will be handled in Step 2. In turn we have  $1 {\le {p}\le q \le } \infty$ and 
\begin{equation}\label{hyp:Embed_dqr}
  \frac1{p}<\frac1q+\frac1d.
\end{equation}
Let us show that there exists an explicit constant $C>0$ which depends only on $\Omega$ and $\lambda$ such that for any $u\in\R^\T$ and any $f\in L^{q'}(\Omega)$ with ${q'}^{-1} + q^{-1} = 1$, one has
\begin{equation}\label{eq:sobdebase}
\left|\int_\Omega u^\M(x) f(x)\,\dd x\right|\leq C\left(\|u\|_{0,1}\|f\|_{L^1(\Omega)} + \diam(\Omega)^{1+\frac dq-\frac d{p}} \Lambda_{p,q}|u|_{1,{p}}\|f\|_{L^{q'}(\Omega)}\right).
\end{equation}
From Proposition~\ref{prop.fundineq}, for any test function $f\in L^{q'}(\Omega)$, one has
\[
\left|\int_\Omega u^\M(x) f(x)\,\dd x\right|\leq  C\left(\|u\|_{0,1}\|f\|_{L^1(\Omega)} + \iint_{\Omega\times\Omega} \frac{|\nabla^\M u(\xi)f(x)|}{|x-\xi|^{d-1}}\,\dd x\,\dd \xi\right).
\]
Now, define $G = |\nabla^\M u|\mathbf{1}_\Omega$ and $w  = |\cdot|^{1-d}\mathbf{1}_{|\cdot|\leq\diam(\Omega)}$. Then, by H\"older's and Young's convolution inequalities
\begin{align*}
\iint_{\Omega\times\Omega} \frac{|\nabla^\M u(\xi)f(x)|}{|x-\xi|^{d-1}}\dd x\dd \xi&\leq \|f\|_{L^{q'}(\Omega)} \|G\ast w\|_{L^{q}(\R^d)}\\
&\leq \|f\|_{L^{q'}(\Omega)} \|G\|_{L^{p}(\R^d)} \|w\|_{L^{s}(\R^d)},
\end{align*}
with $s^{-1} = 1+q^{-1}-p^{-1}$ and, by assumption, $s\in[1, d/(d-1))$. Thus, a direct computation leads to
\begin{align*}
\|w\|_{L^{s}(\R^d)} = \left(\int_{B(0,\diam(\Omega))} \frac{\dd z}{|z|^{(d-1)s}}\right)^{1/s} &= \diam(\Omega)^{1-d+\frac ds} \Bd^{\frac1s} \Lambda_{p,q}.
\end{align*}
 Since $\|G\|_{L^{p}(\R^d)} = |u|_{1,p}$ this yields
\[
\iint_{\Omega\times\Omega} \frac{|\nabla^\M u(\xi)f(x)|}{|x-\xi|^{d-1}}\,\dd x\,\dd \xi\leq C_d \diam(\Omega)^{1 +\frac{d}q-\frac{d}p } \Lambda_{p,q}|u|_{1,p}\|f\|_{L^{q'}(\Omega)},
\]
for some constant $C_d$ depending only on $d$ and $\Lambda_{p,q}$ defined by~\eqref{eq:constantSob}.
This shows \eqref{eq:sobdebase} under condition~\eqref{hyp:Embed_dqr}. If $q<\infty$ then \eqref{eq:sobdebase} implies
\[
\|u\|_{0,q}^q\leq C\left( \|u\|_{0,1}\|u\|_{0,q-1}^{q-1} + \diam(\Omega)^{1 +\frac{d}q-\frac{d}p } \Lambda_{p,q}|u|_{1,{p}}\|u\|_{0,q}^{q-1}\right),
\]
which yields \eqref{eq:preciseemb1} by H\"older's inequality.
In the case $q=\infty$ and $p>d$ of Case A, take $f = \mathrm{sgn}(u^\M)\mathbf{1}_{K_u}$ with $K_u$ such that $\max_{K\in\T}|u_K| = |u_{K_u}|$. Then 
\eqref{eq:sobdebase} yields the inequality.
\smallskip

\noindent\emph{Step 2:} It remains to show Case C for the boundary exponent $q = pd/(d-p)$ with $p<d$. In this step, we assume $p>1$. One has 
\[
\iint_{\Omega\times\Omega} \frac{|\nabla^\M u(\xi)f(x)|}{|x-\xi|^{d-1}}\dd x\dd \xi\leq C^{\rm HLS}_{p,q'}\|f\|_{L^{q'}(\Omega)} |u|_{1,p}.
\]
where $C^{\rm HLS}_{p,q'}$ is the best constant in the Hardy--Littlewood--Sobolev inequality. By \cite[Theorem 4.3]{liebloss} one has the following bound 
\[C^{\rm HLS}_{p,q'} \leq d\Bd^{1-\frac1d}\frac{1}{pq'}\left(\left(\frac{p'(d-1)}{d}\right)^{1-\frac1d} + \left(\frac{q(d-1)}{d}\right)^{1-\frac1d}\right)\leq C_d\Lambda_{p,q},
\]
for a constant $C_d$ depending only on $d$. Proceeding as in Step 1, this proves the last case.

\smallskip

\noindent\emph{Step 3:} It remains to treat the case of $p=1$ and $q = \frac{d}{d-1}$. This last case is treated as in \cite{BCCHF}, independently from the potential estimate. Indeed, observe that $u^\M\in BV(\Omega)$, and  $|u|_{1,1}$ is exactly the total variation of $u^\M$, we use the $BV(\Omega)\subset L^{\frac{d}{d-1}}(\Omega)$ embedding which yields
\begin{equation}\label{eq:BVestim}
\|u - \bar{u}\|_{0,\frac{d}{d-1}}\leq C^{\rm BV} |u|_{1,1},
\end{equation}
for some constant $C^{\rm BV}$ depending only on $\Omega$, see for example \cite[Theorem~5.11.1]{Ziemer89}. In particular since $\|u\|_{0,d/(d-1)}\leq|\Omega|^{-\frac1d}\|u\|_{0,1} + \|u - \bar{u}\|_{0,d/(d-1)}$, one obtains the desired inequality.
\end{proof}

Theorem~\ref{thm:Sobembedding} is then a consequence of Proposition~\ref{prop:preciseembedding}.

\begin{proof}[Proof of Theorem~\ref{thm:Sobembedding}]
The inequality follows directly from Proposition~\ref{prop:preciseembedding}. Indeed observing that $\Lambda_{p,q}\geq1$ and applying H\"older inequality for the first term of the right-hand side of \eqref{eq:preciseemb1} one finds \eqref{eq:Sobemb} with 
\begin{equation}\label{eq:Csob}
C^{\rm Sob}_{p,q}  = C \diam(\Omega)^{1 +\frac{d}q-\frac{d}p }\Lambda_{p,q},
\end{equation}
for some $C$ depending only on $\Omega$ and $\lambda$.
\end{proof}

\subsection{Poincaré--Sobolev inequalities: proof of Theorems~\ref{thm:PoincarSob} and \ref{thm:PoincarSobDir}}\label{sec: proofs Thm Poincare}
\begin{proof}[Proof of Theorem~\ref{thm:PoincarSob}] As in the third step of the proof of Proposition~\ref{prop:preciseembedding}, we use the $BV(\Omega)\subset L^{\frac{d}{d-1}}(\Omega)\subset L^{1}(\Omega)$ embeddings which yield thanks to \eqref{eq:BVestim} that
\[ 
\|u - \bar{u}\|_{0,1}\leq |\Omega|^{\frac1d}C^{\rm BV} |u|_{1,1}.
\]
Applying Proposition~\ref{prop:preciseembedding} to $u-\bar{u}$ and injecting the above inequality combined with H\"older's inequality yields the result with $C^{0}_{p,q} = C \diam(\Omega)^{1 +\frac{d}q-\frac{d}p } \Lambda_{p,q}$  for some $C$ depending only on $\Omega$ (through $C^{\rm BV}$ in particular) and $\lambda$.
\end{proof}
\begin{proof}[Proof of Theorem~\ref{thm:PoincarSobDir}]
Combining H\"older inequality, \cite[Lemma 4.1]{BCCHF} and Remark~\ref{rk:equivseminorms} we have
\begin{equation*}
    \|u\|_{0,1} \leq |\Omega|^{\frac1d} \|u\|_{0,\frac{d}{d-1}} \leq C^{\rm BCF} |\Omega|^{\frac1d} N_{1,1,\Gamma_0}(u) = C^{\rm BCF} |\Omega|^{\frac1d} |u|_{1,1,\Gamma_0} ,
\end{equation*}
for some constant $C^{\rm BCF}$ depending only on $\Omega$ and $\Gamma_0$. Reinserting the above inequality into the result of Proposition~\ref{prop:preciseembedding} and proceeding as in the proofs of the two previous theorems, one finds $C^{\Gamma_0}_{p,q} = C \diam(\Omega)^{1 +\frac{d}q-\frac{d}p } \Lambda_{p,q}$  for some $C$ depending only on $\Omega$ and $\Gamma_0$  (through $C^{\rm BCF}$ in particular) as well as $\lambda$.
\end{proof}
\subsection{Gagliardo--Nirenberg--Sobolev inequality: proof of Theorem~\ref{thm:GagliardoNirenbergGeneral}}\label{sec: Gagliardo--Nirenberg proof} 
In order to prove Theorem~\ref{thm:GagliardoNirenbergGeneral} for the critical and super-critical exponents $p\geq d$, we need to refine and localize the estimates of Section~\ref{sec: discrete potential inequality}. More precisely we are going to derive a discrete version of \cite[Lemma~4.15]{AdamsFournier2003}, which is \eqref{eq:loc_estimate_discrete} below. In the following, for the sake of conciseness, we sometimes use $\lesssim$ and $\gtrsim$ to denote inequalities up to some constants depending only on $\Omega$, $\lambda$ and $\mu$. Given $\rho\in(0,1]$, let us introduce 
\begin{equation}\label{eq:AKsig_loc}
A_{K\sigma}^\rho = \frac{|S_{K\sigma}\cap k_\Omega^\rho(x_K)|}{|\sigma|}, 
\end{equation}
with the notation $ k_\Omega^\rho(x_K)=\{x_K+\rho (y-x_K), \, y \in k_\Omega(x_K)\}$. Then, $A_{K\sigma}^\rho$ is clearly a non-decreasing function of $\rho$ and $A_{K\sigma}^\rho\leq A_{K\sigma}$. From there, a direct adaptation of Lemma~\ref{lem:ineqAksig} yields that for all $K\in\T$, one has 
\begin{equation}\label{eq:estim_loc}
   |u_K| \le \frac{1}{|k_\Omega|\rho^{d}}\int_{ k_\Omega^\rho(x_K)} | u^\M| + \frac{1}{|k_\Omega|\rho^{d}}\sum_{\substack{\sigma\in\Eint\\ \sigma \in \EKpLp}} A_{K\sigma}^\rho \frac{|\sigma||u_{K'}-u_{L'}|}{|D_\sigma|} |D_\sigma|.
\end{equation}
Then, we will estimate $A_{K\sigma}^\rho$ following the lines of Lemma~\ref{lem.Aksigma} and Lemma~\ref{lem.Aksigma_reg}. 

\begin{lemma}\label{lem.Aksigma_loc}
Let $\M$ be a mesh in the sense of {\normalfont \ref{HM a partition}--\ref{HM e cell centers}} satisfying the regularity assumptions {\normalfont \ref{HR a volumes}--\ref{HR b faces}}. Then, there exist explicit constants $C$, $\gamma>0$ that depend only on $\Omega$, $\lambda$ and $\mu$ such that for any $K\in\T$ and $\sigma\in \Eint$ and for any $\rho\in(0,1]$, a.e. $x\in K$ and a.e. $\xi\in D_\sigma$ we have
\begin{equation}\label{eq.Aksigma_lo_estimate}
      A_{K\sigma}^\rho \le \frac{C \rho^d}{|x-\xi|^{d-1}}\mathbf{1}_{B(0,\gamma\rho)}(x-\xi).
\end{equation}
\end{lemma}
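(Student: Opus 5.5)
We prove two things: the support property, namely that $A^\rho_{K\sigma}=0$ unless $|x-\xi|<\gamma\rho$, and the size bound $A^\rho_{K\sigma}\lesssim \rho^d|x-\xi|^{1-d}$. The support property follows from localization alone. The size bound comes from redoing Lemma~\ref{lem.Aksigma} with the integration in $z$ cut at the height of the shrunken cone rather than at $\diam(\Omega)$.

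\emph{Support.} Let $H$ be the height of $k_\Omega$. Every point of $k_\Omega^\rho(x_K)$ lies within distance $\rho\,\diam(k_\Omega)\le \rho H'$ of $x_K$, where $H'$ depends only on $\Omega$. If $S_{K\sigma}\cap k^\rho_\Omega(x_K)$ has positive measure, then some ray from $x_K$ through $\overline\sigma$ meets $k^\rho_\Omega(x_K)$ beyond $\sigma$. That ray is straight, so the point where it crosses $\overline\sigma$ is at distance at most $\rho H'$ from $x_K$, which gives $\dd(x_K,\widehat\sigma)\le\rho H'$. As in Case~2 of Lemma~\ref{lem.Aksigma_reg}, $|x-\xi|\le \dd(x_K,\widehat\sigma)+2h\le \rho H'+2h$. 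We still need $h\lesssim\rho$ to conclude. This is where $\mu$ enters. The cell $K$ contains $B(x_K,\mu h)$, and by the star-shapedness in \ref{HR a volumes} the ray from $x_K$ must travel at least $\mu h$ before leaving $K$ and meeting $\sigma$. Hence either $\sigma\in\E_K$, in which case $\dd(x_K,\widehat\sigma)\ge$ roughly $\mu h$ along the ray, or the ray crosses $\partial K$ first and again has length at least $\mu h$. So $\mu h\le\rho H'$ and $h\le \rho H'/\mu$. Taking $\gamma=H'(1+2/\mu)$ gives the indicator $\mathbf 1_{B(0,\gamma\rho)}(x-\xi)$. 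One caveat: if $x_K\in\widehat\sigma$, the distance argument must be replaced by the fact that any point of $\sigma$ hit by a ray of length at most $\rho H'$ from $x_K$ still forces $\mu h\le\rho H'$, by the same star-shapedness argument.

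\emph{Size.} We split as in Lemma~\ref{lem.Aksigma_reg}.

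If $|x-\xi|\le 4h$, we bound $|S_{K\sigma}\cap k^\rho_\Omega(x_K)|\le|k^\rho_\Omega|=\rho^d|k_\Omega|$ and use \ref{HR b faces}. This gives $A^\rho_{K\sigma}\le \rho^d|k_\Omega|/(\lambda^{d-1}h^{d-1}\Bdm)\lesssim\rho^d|x-\xi|^{1-d}$.

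If $|x-\xi|>4h$, then $\dd(x_K,\widehat\sigma)\ge|x-\xi|/2$ by \eqref{eq.distfar}, and $S_{K\sigma}\subset \overline{k_{K\sigma}}$. Intersecting with $B(x_K,\rho H')$ and integrating in $z$ over $(0,\rho H')$ gives $|S_{K\sigma}\cap k^\rho_\Omega(x_K)|\le C_0(\rho H')^d(\tan\theta)^{d-1}$. Dividing by $|\sigma|$ and using \ref{HR b faces} with $h_\sigma\le h$, exactly as in Lemma~\ref{lem.Aksigma_reg}, yields $A^\rho_{K\sigma}\lesssim \rho^d\lambda^{1-d}|x-\xi|^{1-d}$.

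Combining the size bound with the support property gives \eqref{eq.Aksigma_lo_estimate}.

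The main obstacle is the support property, specifically justifying $h\lesssim\rho$ from \ref{HR a volumes}. This is the only place where $\mu$ enters, which is consistent with the lemma's dependence on $\mu$. The case $\sigma\in\E_K$ with $x_K\in\widehat\sigma$ needs care. If the ray argument fails there, we would try to handle it by noting that $B(x_K,\mu h)\subset K$ forces every point of $\overline\sigma$ to be at distance at least $\mu h$ from $x_K$, since $\sigma\subset\partial K$.
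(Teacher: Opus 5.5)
Your proposal is correct and follows essentially the paper's proof. The support part uses the same two vanishing conditions, namely $\dd(x_K,\widehat\sigma)>\rho\,\diam(k_\Omega)$, or $\rho\,\diam(k_\Omega)\le\mu h$ via \ref{HR a volumes}, combined with $|x-\xi|\le\dd(x_K,\widehat\sigma)+2h$, giving the same $\gamma=\diam(k_\Omega)(1+2\mu^{-1})$; the size bound uses the same split into $|x-\xi|\le 4h$ and $|x-\xi|>4h$, with the $z$-integration of Lemma~\ref{lem.Aksigma} cut at $\rho\,\diam(k_\Omega)$. The case you worry about, $\sigma\in\E_K$ with $x_K\in\widehat\sigma$, cannot occur, since \ref{CR distK} and $\widehat\sigma\subset H_\sigma$ give $\dd(x_K,\widehat\sigma)\ge\mu h$. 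In any event, your fallback observation that $\overline\sigma\cap B(x_K,\mu h)=\emptyset$ is exactly how the paper uses \ref{HR a volumes}.
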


\begin{proof}
 First, observe that if $\dd(x_K,\widehat{\sigma})>\rho\,\diam(k_\Omega)$ then $A_{K\sigma}^\rho = 0$. Moreover, by \ref{HR a volumes}, if $\rho\,\diam(k_\Omega) \leq\mu h$ then $A_{K\sigma}^\rho = 0$. Therefore, the bound is valid in these cases and we may assume from now on that $\dd(x_K,\widehat{\sigma})\leq\rho\,\diam(k_\Omega)$ and $\rho\,\diam(k_\Omega) >\mu h$. Following Lemma~\ref{lem.Aksigma_reg} we split the estimate depending on the size of $|\xi-x|$.
 
\noindent\emph{Step 1: $|\xi-x|\le 4 h$.} One has
\[
A_{K\sigma}^\rho\leq \frac{| k_\Omega^\rho|}{|\sigma|} \lesssim \frac{\rho^d}{|x-\xi|^{d-1}}.
\]
\noindent\emph{Step 2: $|\xi-x| > 4 h$.} In this case we recall that $\dd(x_K,\widehat{\sigma})>0$. A direct adaptation of the proof of Lemma~\ref{lem.Aksigma} yields that 
\begin{equation}\label{lem.Aksigma.localized}
 A_{K\sigma}^\rho \lesssim \frac{\rho^d}{|\sigma|}\, \left(\dfrac{ h_\sigma}{\dd(x_K,\widehat{\sigma})}\right)^{d-1}.
\end{equation}
In turn, since $\dd(x_K,\widehat{\sigma})>|x-\xi|/2$ in this case, one infers the estimate
\[
A_{K\sigma}^\rho \lesssim \frac{h^{d-1}\rho^d}{|\sigma|\dd(x_K,\widehat{\sigma})^{d-1}}\lesssim \frac{\rho^d}{|x-\xi|^{d-1}}.
\]
\noindent\emph{Step 3: localization.} We have found at this point that the bound $A_{K\sigma}^\rho\lesssim \rho^d|x-\xi|^{1-d}$ holds provided that $\rho\,\diam(k_\Omega) >\mu h$ and $\dd(x_K,\widehat{\sigma})\leq\rho\,\diam(k_\Omega)$, and otherwise $A_{K\sigma}^\rho=0$. Observe that if $\rho\,\diam(k_\Omega) >\mu h$ and $|x-\xi|\geq \rho \,\diam(k_\Omega)(1+2\mu^{-1})$, then $|x-\xi|>\rho\,\diam(k_\Omega)+2h$. But by the triangle inequality one always has $|x-\xi|\leq \dd(x_K,\widehat{\sigma}) + 2h$, thus in this case $\dd(x_K,\widehat{\sigma})>\rho\,\diam(k_\Omega)$ and $A_{K\sigma}^\rho=0$. This yields the claim with $\gamma = \diam(k_\Omega)(1+2\mu^{-1})$.
\end{proof}

This allows us to derive the following localized version of Proposition~\ref{prop:preciseembedding}. 
\begin{proposition}\label{prop:evenmorepreciseembedding}
 Let $\M$, $p$, $q$ and $r$ be as in Theorem~\ref{thm:GagliardoNirenbergGeneral} and additionally such that $p\geq d$ and $q<\infty$ if $p=d$. Then, for all $u\in\R^\T$, and $\rho\in(0,1]$ one has 
\[
\|u\|_{0,q}\leq C\left(\rho^{\frac dq - \frac dr}\|u\|_{0,r} + \rho^{\frac dq - \frac dp + 1}|u|_{1,p}\right),
\]
where $C$ depends only on $\Omega$, $\lambda$, $\mu$, $p$, $q$, and $r$.
\end{proposition}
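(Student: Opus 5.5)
Combine the localized pointwise estimate \eqref{eq:estim_loc} with Lemma~\ref{lem.Aksigma_loc}, and then use Young's convolution inequality as in Step~1 of Proposition~\ref{prop:preciseembedding}. The new point is that the kernel is now truncated at scale $\gamma\rho$ and weighted by $\rho^{d}$, which produces the powers of $\rho$.

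\emph{Pointwise bound.} Inserting \eqref{eq.Aksigma_lo_estimate} into \eqref{eq:estim_loc}, the factors $\rho^d$ cancel. For $x\in K$ we get
\[
|u^\M|(x)\lesssim \rho^{-d}\int_{k^\rho_\Omega(x_K)}|u^\M|\,\dd y+\int_\Omega \frac{|\nabla^\M u|(\xi)}{|x-\xi|^{d-1}}\mathbf 1_{B(0,\gamma\rho)}(x-\xi)\,\dd\xi .
\]
The second term is $(G\ast w_\rho)(x)$ with $G=|\nabla^\M u|\mathbf 1_\Omega$ and $w_\rho=|\cdot|^{1-d}\mathbf 1_{|\cdot|<\gamma\rho}$. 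By Young's inequality with $s^{-1}=1+q^{-1}-p^{-1}$, its $L^q$ norm is at most $|u|_{1,p}\|w_\rho\|_{L^s}$. We have $s<d/(d-1)$ precisely because $p\ge d$, $q\ge p$, and $q<\infty$ when $p=d$. Scaling then gives $\|w_\rho\|_{L^s}\lesssim \rho^{1-d+d/s}=\rho^{1+d/q-d/p}$, which is the second term of the claim.

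\emph{The averaged term.} Here we must control the $L^q$ norm of $x\mapsto \rho^{-d}\int_{k^\rho_\Omega(x_{K(x)})}|u^\M|$ by $\rho^{d/q-d/r}\|u\|_{0,r}$. We have $k^\rho_\Omega(x_K)\subset B(x,\rho\,\diam(k_\Omega)+h)$. If $\rho\,\diam(k_\Omega)\gtrsim \mu h$, this ball has radius $\lesssim \rho$, so the term is bounded by $(\rho^{-d}\mathbf 1_{B(0,c\rho)})\ast|u^\M|$. Young's inequality with exponents $r\to q$ (admissible when $q\ge r$) gives $\rho^{-d}\rho^{d(1-1/r+1/q)}=\rho^{d/q-d/r}$. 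If instead $\rho\,\diam(k_\Omega)\le \mu h$, then $k^\rho_\Omega(x_K)\subset B(x_K,\mu h)\subset K$ by \ref{HR a volumes}, and the average is exactly $|u_K|$. In that case the bound is trivial, since $\|u\|_{0,q}\le \|u\|_{0,q}$ is what we want. To avoid that circularity, in this regime I would instead use an inverse estimate $\|u\|_{0,q}\lesssim h^{d/q-d/r}\|u\|_{0,r}\lesssim \rho^{d/q-d/r}\|u\|_{0,r}$, valid for piecewise constants since $|K|\gtrsim (\mu h)^d$ and $q\ge r$. The case $q<r$ follows from Hölder's inequality and $\rho\le1$, since then $\rho^{d/q-d/r}\ge1$ up to the factor $|\Omega|^{1/q-1/r}$.

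\emph{Main obstacle.} The delicate part is the averaged term. It requires the $\mu$-dependent control of the geometry, namely the comparability of the localized cone with a ball of radius $\sim\rho$ and the small-$\rho$ regime. This is why $C$ depends on $\mu$ here, unlike in Proposition~\ref{prop:preciseembedding}. I would also double-check that \eqref{eq:estim_loc} holds for a.e.\ $x\in K$ with the cone based at $x_K$, since Lemma~\ref{lem.Aksigma_loc} is stated in $x$ but the shadows are taken from $x_K$.
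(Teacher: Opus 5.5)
Your argument follows the paper's proof. You insert Lemma~\ref{lem.Aksigma_loc} into \eqref{eq:estim_loc} so that the factors $\rho^d$ cancel. For $\rho\lesssim\mu h$ you use the inverse estimate $\|u\|_{0,q}\le(\min_K|K|)^{\frac1q-\frac1r}\|u\|_{0,r}$ with $|K|\ge\mu^dh^d|B_d|$. For $\rho\gtrsim\mu h$ you use $k^\rho_\Omega(x_K)\subset B(x,c\rho)$ and apply Young's convolution inequality to both terms. Your threshold differs from the paper's $\gamma$-based one only by constants.

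\emph{The gap.} Young's inequality for the gradient term needs $s\ge1$, i.e.\ $q\ge p$, and the hypotheses do not give this. The paper's proof passes over the same point. For $p\ge d$, the relation in Theorem~\ref{thm:GagliardoNirenbergGeneral} only forces $q\ge r$; for instance $d=2$, $p=4$, $r=1$, $\theta=\frac12$ gives $q=\frac83<p$. In that regime the termwise bound is not just unproved but false. Take $G=\mathbf 1_\Omega$: then $(G\ast w_\rho)(x)=d|B_d|\gamma\rho$ whenever $\dd(x,\partial\Omega)\ge\gamma\rho$, so $\|G\ast w_\rho\|_{L^q(\Omega)}\gtrsim\rho$. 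Meanwhile $\rho^{1+\frac dq-\frac dp}\|G\|_{L^p}\ll\rho$ as $\rho\to0$. So the two-term Young argument cannot close when $q<p$.

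\emph{How to fix it.} Add an interpolation step. If $q=r$ there is nothing to prove. Otherwise, run your argument for an exponent $q_1>q$ with $q_1\ge p$: take $q_1=\infty$ if $p>d$, or any finite $q_1\ge d$ if $p=d$. The triple $(p,q_1,r)$ is admissible, and your proof is valid there. Then write $\|u\|_{0,q}\le\|u\|_{0,q_1}^{a}\|u\|_{0,r}^{1-a}$ with $\frac1q=\frac a{q_1}+\frac{1-a}r$. The piece $\bigl(\rho^{\frac d{q_1}-\frac dr}\|u\|_{0,r}\bigr)^a\|u\|_{0,r}^{1-a}$ equals $\rho^{\frac dq-\frac dr}\|u\|_{0,r}$. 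For the cross term $\bigl(\rho^{1+\frac d{q_1}-\frac dp}|u|_{1,p}\bigr)^a\|u\|_{0,r}^{1-a}$, apply Young's inequality for products, $A^aB^{1-a}\le aA+(1-a)B$, with $A=\rho^{1+\frac dq-\frac dp}|u|_{1,p}$ and $B=\rho^{\frac dq-\frac dr}\|u\|_{0,r}$. The powers of $\rho$ match exactly by the definition of $a$.

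\emph{A smaller error.} Your aside on $q<r$ has the sign reversed. In that case $\rho^{\frac dq-\frac dr}\le1$, and the inequality actually fails for constant $u$ as $\rho\to0$. The case is vacuous here, though, since $p\ge d$ forces $q\ge r$.
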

\begin{proof}
Under the assumptions, observe that $1\leq r < q$ and $\frac1q -\frac1p+\frac1d>0$. Let us distinguish the cases of small and large $\rho>0$. Assume that $(\gamma-\diam(k_\Omega))\rho\leq  h$, with $\gamma>\diam(k_\Omega)$ given in Lemma~\ref{lem.Aksigma_loc}. First, since $r\leq q$, by sub-additivity of $x\to x^{r/q}$, one has 
\[
\|u\|_{0,q} \leq \left(\min_{K\in\T} |K|\right)^{\frac1q-\frac1r}\|u\|_{0,r}.
\]
But, since $r\leq q$ and  $|K|\geq \mu^d h^d|B_d|\geq\mu^d|B_d|(\gamma-\diam(k_\Omega))^d\rho^d$, the desired inequality holds. Now we assume that $\rho\,\diam(k_\Omega) + h \leq \gamma\rho$. In this case observe that for any $x\in K$, one has $k_\Omega^\rho(x_K)\subset \Omega\cap B(x,\gamma\rho)$. Therefore, using \eqref{eq:estim_loc} and Lemma~\ref{lem.Aksigma_loc}, one gets
\begin{equation}\label{eq:loc_estimate_discrete}
|u^\M(x)|\lesssim \rho^{-d}\int_{B(x,\gamma\rho)\cap\Omega} |u^\M(\xi)|\,\dd\xi  + \int_{B(x,\gamma\rho)\cap\Omega} \frac{ |\nabla^\M u(\xi)|}{|x-\xi|^{d-1}}\,\dd\xi.
\end{equation}
Then, the result follows from Young's convolution inequality used on the two terms.
\end{proof}
We are now equipped to prove the discrete Gagliardo--Nirenberg--Sobolev inequality.
\begin{proof}[Proof of Theorem~\ref{thm:GagliardoNirenbergGeneral}] We divide the proof in two steps depending on the value of $p$. We argue similarly in the cases of zero mean value and Dirichlet boundary conditions and do not detail the proofs in these cases.

\smallskip

\noindent\emph{Step 1:} Let $1\leq p <d$ and define $p^* = pd/(d-p)$. First, using H\"older inequality with exponents $p^*/\theta q$ and $r/(1-\theta)q$,  it holds that
\begin{align*}
    \|u\|^q_{0,q} \leq \|u\|_{0,p^*}^{\theta q} \, \|u\|_{0,r}^{(1-\theta)q}.
\end{align*}
Hence, applying Theorem~\ref{thm:Sobembedding} Case C, we obtain \eqref{ineg GNS} with the constant $\left(C^{\rm Sob}_{p,p^*}\right)^{\theta}$ where $C^{\rm Sob}_{p,p^*}$ is defined in \eqref{eq:Csob}. 

\smallskip

\noindent\emph{Step 2:} Let $p=d$ and $q<\infty$ (so that $\theta<1$) or $p > d$. Under these assumptions, we recall that $1\leq r < q$ and $\frac1q -\frac1p+\frac1d>0$. Let 
\[A = \|u\|_{0,r}\quad\text{and}\quad B = C^\text{Sob}_{p,r}\|u\|_{1,p} .\] 
Using Proposition~\ref{prop:evenmorepreciseembedding}, one obtains for a possibly worse constant than in the proposition that
\[
\|u\|_{0,q}\leq C\left(A\left(\tfrac dr - \tfrac dq\right)^{-1}\rho^{\frac dq - \frac dr} + B \left(\tfrac dq - \tfrac dp + 1\right)^{-1}\rho^{\frac dq - \frac dp + 1}\right) =: F(\rho).
\]
The function $F: (0,+\infty)\to (0,+\infty)$ attains its minimum at $\rho_* = (A/B)^{(1+\frac dr - \frac dp)^{-1}}$ with $\rho_* \leq 1$ thanks to Theorem~\ref{thm:Sobembedding}. By taking $\rho = \rho_*$ in the above inequality, one obtains the desired result.
\end{proof}

\subsection{Trudinger inequality: proof of Theorem~\ref{thm:Trudinger1} and extensions}\label{sec: Trudinger}

In this section, we first prove Theorem~\ref{thm:Trudinger1} as well as alternative formulations in Proposition~\ref{prop:Trudinger2}, Proposition~\ref{prop:Trudinger3}, and Proposition~\ref{prop:Trudinger4}.

\begin{proof}[Proof of Theorem~\ref{thm:Trudinger1}]
Let $\alpha>0$ and $u\in\R^\T$ such that $\|u\|_{1,d}\leq1$. Using the series expansion of the exponential function and setting $d' = \frac{d}{d-1}$, one has
\begin{equation*}
\sum_{K\in\T}|K|\left[\exp\left(\alpha |u_K|^{d'}\right)-1\right] = \sum_{k=1}^\infty \frac{\alpha^k}{k!}\|u\|_{0,kd'}^{kd'}\leq \sum_{k=1}^\infty \frac{\alpha^k}{k!} \left(C_{d,kd'}^\text{Sob}\right)^{kd'},
\end{equation*}
where we used Theorem~\ref{thm:Sobembedding}. Now using \eqref{eq:Csob}, we have, for $k\ge d-1$,
\[
    \left(C^{\rm Sob}_{d,kd'}\right)^{kd'} = C^{kd'}\, \diam(\Omega)^d (k+1)^{k+1}.
\]
Therefore, we obtain
\begin{equation*}
\sum_{K\in\T}|K|\left[\exp\left(\alpha |u_K|^{d'}\right)-1\right]\leq \diam(\Omega)^d \sum_{k=1}^\infty a_k,
\end{equation*}
where $a_k = \alpha^kC^{kd'}(k+1)^{k+1}/k!$ for $k\ge d-1$. Since $a_{k+1}/a_k\to \alpha e C^{d'}$ as $k\to \infty$ the series converges if $\alpha < \alpha_\star = e^{-1} C^{-d'}$ by d'Alembert criterion. 
\end{proof}

\begin{proposition}[Trudinger inequality, second form]\label{prop:Trudinger2}
Under the assumptions of Theorem~\ref{thm:Trudinger1}, one has with the same constants that for all $0<\beta<\beta_\star$, there is a constant $C_\beta$ depending only on $\Omega$, $\lambda$ and $\beta$ such that for any $u\in\R^\T$,
\begin{equation}\label{eq:MT2}
\sum_{K\in\T}|K|\exp|u_K| \leq C_\beta \exp\left(\frac{1}{\beta}\|u\|_{1,d}^d\right),
\end{equation}
where $\beta_\star = \frac{\alpha_\star^{d-1}d^d}{(d-1)^{d-1}}$ depends only on $\Omega$ and $\lambda$.
\end{proposition}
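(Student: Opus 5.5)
The plan is to deduce \eqref{eq:MT2} from Theorem~\ref{thm:Trudinger1} by normalization and a pointwise Young inequality, as in the continuous case. Let $u\in\R^\T$ with $u\neq 0$; if $\|u\|_{1,d}=0$ then $u=0$ and the claim is trivial with $C_\beta\ge|\Omega|$. Set $N=\|u\|_{1,d}>0$ and $v=u/N$, so that $\|v\|_{1,d}=1$ and Theorem~\ref{thm:Trudinger1} applies to $v$ for any $\alpha<\alpha_\star$. Write $d'=d/(d-1)$.

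The key step is the cellwise bound on $|u_K| = N|v_K|$. Apply Young's inequality with conjugate exponents $d$ and $d'$ in the weighted form
\[
N|v_K| \le \frac{(\epsilon |v_K|)^{d'}}{d'} + \frac{(N/\epsilon)^{d}}{d},
\]
and choose $\epsilon>0$ such that $\epsilon^{d'}/d'=\alpha$. This gives $|u_K|\le \alpha|v_K|^{d'} + \frac{N^d}{d\,\epsilon^d}$. Solving for $\epsilon$ gives $\epsilon^d=(\alpha d')^{d-1}$, so the second term equals $N^d/\beta$ with
\[
\beta = d\,(\alpha d')^{d-1}=\frac{\alpha^{d-1}d^d}{(d-1)^{d-1}}.
\]
The map $\alpha\mapsto\beta$ is an increasing bijection of $(0,\alpha_\star)$ onto $(0,\beta_\star)$, with $\beta_\star$ exactly as in the statement.

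Exponentiate and sum over the cells:
\[
\sum_K|K|\exp|u_K|\le \exp\!\left(\tfrac{1}{\beta}\|u\|_{1,d}^d\right)\sum_K|K|\exp\big(\alpha|v_K|^{d'}\big)\le C_\alpha \exp\!\left(\tfrac{1}{\beta}\|u\|_{1,d}^d\right).
\]
The last inequality is \eqref{eq:MT1} applied to $v$. Hence $C_\beta := C_{\alpha(\beta)}$, where $\alpha(\beta)$ is the inverse of the map above, and it depends only on $\Omega$, $\lambda$ and $\beta$.

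There is no real obstacle here. The only point needing care is the Young-inequality bookkeeping, so that the resulting threshold matches the stated $\beta_\star$ exactly.
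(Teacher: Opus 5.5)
Your proposal is correct and follows essentially the same route as the paper: normalize $v=u/\|u\|_{1,d}$, apply the weighted Young inequality $|u_K|\le \alpha|v_K|^{d'}+\|u\|_{1,d}^d/\bigl(d(\alpha d')^{d-1}\bigr)$, and conclude with \eqref{eq:MT1}, giving $\beta=\alpha^{d-1}d^d/(d-1)^{d-1}$. Your exponent bookkeeping and the handling of the case $u=0$ are both sound.
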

\begin{proof} If $u=0$, the inequality is satisfied. Otherwise, let us set $v = u/\|u\|_{1,d}$. By Young's inequality one has that for all $K\in\T$
\[
|u_K| \leq \alpha|v_K|^{d'} + \frac{\|u\|^{d}_{1,d}}{d(\alpha d')^{d-1}}.
\]
Then, we may apply \eqref{eq:MT1} to conclude with $\beta =  \frac{\alpha^{d-1}d^d}{(d-1)^{d-1}}$.
\end{proof}

\begin{proposition}[Trudinger inequality for zero average functions]\label{prop:Trudinger3}
Under the assumptions of Theorem~\ref{thm:Trudinger1}, there exists $\alpha^0_\star$ such that for all $0<\alpha<\alpha_\star^0$, there is $C_\alpha>0$ such that
\begin{equation}\label{eq:MT4}
\sup_{\substack{u\in\R^\T\\|u|_{1,d}\leq 1}} \sum_{K\in\T}|K|\exp\left(\alpha |u_K- \bar{u}|^{\frac{d}{d-1}}\right)\leq C_\alpha,\qquad \bar{u} = \frac{1}{|\Omega|}\sum_{K\in\T} |K|u_K.
\end{equation}
There also exists $\beta_\star^0$, such that for all $0<\beta<\beta_\star^0$, there is $C_\beta>0$ such that for all $u\in\R^\T$
\begin{equation}\label{eq:MT3}
\sum_{K\in\T}|K|\exp|u_K - \bar{u}| \leq C_\beta \exp\left(\frac{1}{\beta}|u|_{1,d}^d\right),\qquad \bar{u} = \frac{1}{|\Omega|}\sum_{K\in\T} |K|u_K.
\end{equation}
The constants depend on the mesh only through $\Omega$ and $\lambda$.
\end{proposition}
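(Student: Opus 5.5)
The plan is to repeat the proofs of Theorem~\ref{thm:Trudinger1} and Proposition~\ref{prop:Trudinger2}, with Theorem~\ref{thm:Sobembedding} replaced by Theorem~\ref{thm:PoincarSob}. The essential input is that the zero-mean constant $C^{0}_{d,q}$ has the same growth as $C^{\rm Sob}_{d,q}$. More precisely, the proof of Theorem~\ref{thm:PoincarSob} gives $C^0_{d,q}=C\diam(\Omega)^{d/q}\Lambda_{d,q}$ with $\Lambda_{d,q}=(d/(d-1))^{1-1/d}+q^{1-1/d}$. Here $C$ depends only on $\Omega$ and $\lambda$, the $\Omega$-dependence entering through $C^{\rm BV}$.

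For \eqref{eq:MT4}, let $|u|_{1,d}\le1$ and set $v=u-\bar u$. Then $v_K=u_K-\bar u$, and $|v|_{1,d}=|u|_{1,d}\le 1$ because jumps are unchanged by adding a constant. Also $\bar v=0$. Expanding the exponential with $d'=d/(d-1)$ gives
\[
\sum_{K\in\T}|K|\exp\bigl(\alpha|v_K|^{d'}\bigr)=|\Omega|+\sum_{k\ge1}\frac{\alpha^k}{k!}\|v\|_{0,kd'}^{kd'} .
\]
For indices $q=kd'\ge d$, that is $k\ge d-1$, we apply Theorem~\ref{thm:PoincarSob} with $p=d$ and $q=kd'$. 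This gives $\|v\|_{0,kd'}\le C\diam(\Omega)^{d/(kd')}\Lambda_{d,kd'}$. Since $\Lambda_{d,kd'}\lesssim (kd')^{1/d'}$, we obtain $\Lambda_{d,kd'}^{kd'}\le \tilde C^{k}(k+1)^{k}$ up to harmless factors. For the finitely many $k<d-1$, we use H\"older's inequality \eqref{eq:Holder} to bound $\|v\|_{0,kd'}$ by $\|v\|_{0,d}$, and then apply the case $q=d$. The $k$-th term is then controlled by $a_k=\alpha^k\hat C^{k}(k+1)^{k+1}/k!$ times $\diam(\Omega)^d$. Because $a_{k+1}/a_k\to\alpha e\hat C$, d'Alembert's criterion gives convergence for $\alpha<\alpha^0_\star:=(e\hat C)^{-1}$. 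The bound is uniform in $u$ and in the mesh, and depends only on $\Omega$ and $\lambda$.

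For \eqref{eq:MT3}, the case $|u|_{1,d}=0$ is immediate. In that case all jumps vanish, so $u$ is constant because $\Omega$ is connected and cells are linked through interior faces, and the left side equals $|\Omega|$. Otherwise put $w=(u-\bar u)/|u|_{1,d}$, so that $|w|_{1,d}=1$ and $\bar w=0$. Young's inequality with exponents $d'$ and $d$ gives
\[
|u_K-\bar u|=|u|_{1,d}\,|w_K|\le \alpha|w_K|^{d'}+\frac{|u|_{1,d}^d}{d(\alpha d')^{d-1}}.
\]
Exponentiating, summing and using \eqref{eq:MT4} yields the claim with $\beta=\alpha^{d-1}d^d/(d-1)^{d-1}$ and $\beta^0_\star=(\alpha^0_\star)^{d-1}d^d/(d-1)^{d-1}$.

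The main point to check is that the Poincar\'e constant from Theorem~\ref{thm:PoincarSob} has the form $C\,\Lambda_{d,q}$ with $C$ independent of $q$. This holds because the proof applies Proposition~\ref{prop:preciseembedding} to $u-\bar u$ and bounds $\|u-\bar u\|_{0,1}$ by $|u|_{1,1}$ via the $q$-independent BV estimate \eqref{eq:BVestim}. It then uses $|u|_{1,1}\le|\Omega|^{1-1/d}|u|_{1,d}$ and $\Lambda_{d,q}\ge1$, so the $q$-dependence sits only in $\Lambda_{d,q}$. Everything else is bookkeeping identical to the proof of Theorem~\ref{thm:Trudinger1}.
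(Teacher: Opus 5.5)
Your proposal is correct and follows the paper's own route: rerun the series-expansion argument of Theorem~\ref{thm:Trudinger1} with Theorem~\ref{thm:PoincarSob} in place of Theorem~\ref{thm:Sobembedding}, then the Young-inequality rescaling of Proposition~\ref{prop:Trudinger2}, which you spell out in more detail than the paper does (including the finitely many $k<d-1$ terms and the uniformity in $q$ of the Poincar\'e constant). One harmless slip: for $p=d$ and $q<\infty$ one has $\frac1d+\frac1q-\frac1p=\frac1q>0$, so the first line of \eqref{eq:constantSob} applies, not the formula you quote, and it gives exactly $\Lambda_{d,kd'}^{kd'}=(k+1)^{k+1}$ with $d'=d/(d-1)$; since you only use the growth $\Lambda_{d,q}=O(q^{1-1/d})$, nothing in your argument changes.
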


\begin{proof} The proof is obtained as a combination of the proofs of Theorem~\ref{thm:Trudinger1} and  Proposition~\ref{prop:Trudinger2} starting from Theorem~\ref{thm:PoincarSob}.
\end{proof}

\begin{proposition}[Trudinger inequality with Dirichlet boundary conditions]\label{prop:Trudinger4} Let $\Gamma_0\subset \partial \Omega$ be a non-empty portion of the boundary with $\Gamma_0=\cup_{\sigma \in \Eextz} \sigma$ for some $\Eextz\subset \Eext$. Then, under the assumptions of Theorem~\ref{thm:Trudinger1}, there exists $\alpha^{\Gamma_0}_\star$ such that for all $0<\alpha<\alpha_\star^{\Gamma_0}$, there is $C_\alpha>0$ such that
\begin{equation}\label{eq:MT5}
\sup_{\substack{u\in\R^\T\\ |u|_{1,d,\Gamma_0}\leq 1}} \sum_{K\in\T}|K|\exp\left(\alpha |u_K|^{\frac{d}{d-1}}\right)\leq C_\alpha.
\end{equation}
There also exists $\beta_\star^{\Gamma_0}$ such that for all $0<\beta<\beta_\star^{\Gamma_0}$, there is $C_\beta>0$ such that for all $u\in\R^\T$
\begin{equation}\label{eq:MT6}
\sum_{K\in\T}|K|\exp|u_K| \leq C_\beta \exp\left(\frac{1}{\beta} |u|_{1,d,\Gamma_0}^d \right).
\end{equation}
The constants depend on the mesh only through $\Omega$, $\Gamma_0$, and $\lambda$.
\end{proposition}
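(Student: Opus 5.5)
The plan mirrors the proof of Theorem~\ref{thm:Trudinger1}, with Theorem~\ref{thm:Sobembedding} replaced by the Dirichlet Poincar\'e--Sobolev inequality of Theorem~\ref{thm:PoincarSobDir}. We fix $u\in\R^\T$ with $|u|_{1,d,\Gamma_0}\le 1$ and set $d'=d/(d-1)$.

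\emph{Step 1 (series expansion).} Expanding the exponential gives
\[
\sum_{K\in\T}|K|\exp\left(\alpha|u_K|^{d'}\right) = |\Omega| + \sum_{k\ge1}\frac{\alpha^k}{k!}\|u\|_{0,kd'}^{kd'} \le |\Omega| + \sum_{k\ge1}\frac{\alpha^k}{k!}\bigl(C^{\Gamma_0}_{d,kd'}\bigr)^{kd'},
\]
where the inequality uses \eqref{eq:PoincSob} with $p=d$ and $q=kd'$.

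\emph{Step 2 (explicit constant).} The finitely many indices $kd'<d$, i.e.\ $k<d-1$, are handled by H\"older's inequality~\eqref{eq:Holder}, which reduces them to the case $q=d$. For $k\ge d-1$ we use the explicit form from the proof of Theorem~\ref{thm:PoincarSobDir}:
\[
C^{\Gamma_0}_{d,q}=C\,\diam(\Omega)^{d/q}\Lambda_{d,q},
\]
with $C$ depending only on $\Omega$, $\Gamma_0$ and $\lambda$. Since $1/d+1/q-1/d=1/q>0$, the first line of \eqref{eq:constantSob} applies and gives
\[
\Lambda_{d,kd'}=\bigl(kd'+1-kd'/d\bigr)^{(1+\frac{1}{kd'}-\frac1d)}=(k+1)^{\frac{k+1}{kd'}}.
\]
Here we used $kd'(1-1/d)=k$, so that $1+\frac{1}{kd'}-\frac1d=\frac{k+1}{kd'}$. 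Consequently
\[
\bigl(C^{\Gamma_0}_{d,kd'}\bigr)^{kd'}=C^{kd'}\diam(\Omega)^d(k+1)^{k+1}.
\]

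\emph{Step 3 (convergence).} The $k$-th term of the series is then $a_k=\alpha^kC^{kd'}(k+1)^{k+1}/k!$. By Stirling's formula $a_{k+1}/a_k\to\alpha eC^{d'}$, so d'Alembert's criterion gives convergence for $\alpha<\alpha_\star^{\Gamma_0}:=e^{-1}C^{-d'}$. The resulting sum $C_\alpha$ depends only on $\Omega$, $\Gamma_0$, $\lambda$ and $\alpha$, which proves \eqref{eq:MT5}.

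\emph{Step 4 (second form).} For \eqref{eq:MT6}, the case $u=0$ is trivial. Otherwise set $v=u/|u|_{1,d,\Gamma_0}$. Young's inequality with exponents $d'$ and $d$ gives
\[
|u_K|=|v_K|\,|u|_{1,d,\Gamma_0}\le\alpha|v_K|^{d'}+\frac{|u|_{1,d,\Gamma_0}^d}{d(\alpha d')^{d-1}}.
\]
Exponentiating, summing over $K$ with weights $|K|$, and applying \eqref{eq:MT5} to $v$ yields \eqref{eq:MT6}, with $\beta=\alpha^{d-1}d^d/(d-1)^{d-1}$ and $\beta_\star^{\Gamma_0}=(\alpha^{\Gamma_0}_\star)^{d-1}d^d/(d-1)^{d-1}$.

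The main point to check is Step~2: the constant $C$ in $C^{\Gamma_0}_{d,q}$ must be uniform in $q$. This holds because the $q$-dependence in Proposition~\ref{prop:preciseembedding} is entirely captured by the factors $|\Omega|^{1/q}$, $\diam(\Omega)^{d/q}$ and $\Lambda_{d,q}$. The control of $\|u\|_{0,1}$ by $|u|_{1,1,\Gamma_0}$, and then by $|u|_{1,d,\Gamma_0}$ via H\"older, is independent of $q$; the powers $|\Omega|^{1/q}$ and $\diam(\Omega)^{d/q}$ are bounded uniformly for $q\ge1$ and can be absorbed into $C$.
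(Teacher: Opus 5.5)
Your proof is correct and follows the route the paper indicates. You rerun the series-expansion argument of Theorem~\ref{thm:Trudinger1} with the constants $C^{\Gamma_0}_{d,kd'}=C\,\diam(\Omega)^{d/(kd')}\Lambda_{d,kd'}$ from Theorem~\ref{thm:PoincarSobDir}, then obtain the second form by Young's inequality as in Proposition~\ref{prop:Trudinger2}. Your check that $C$ is uniform in $q$ is the right point to verify, and dividing by $|u|_{1,d,\Gamma_0}$ is legitimate for $u\neq 0$ because \eqref{eq:PoincSob} makes $|\cdot|_{1,d,\Gamma_0}$ a norm.
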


\begin{proof} The proof is obtained as a combination of the proofs of Theorem~\ref{thm:Trudinger1} and  Proposition~\ref{prop:Trudinger2} starting from Theorem~\ref{thm:PoincarSobDir}.
\end{proof}

\section{Extension to broken Sobolev spaces}\label{sec:HFV}

In the following, we focus on the generalization of our results to broken Sobolev spaces, leading to discrete functional inequalities usable in the framework of high-order numerical approximations such as discontinuous Galerkin (DG) schemes. We discuss briefly the extension to hybrid non-conforming schemes in Remark~\ref{rem:hybrid}.

Given a mesh $\M=(\T,\E,\Pcal)$ in the sense of {\normalfont \ref{HM a partition}--\ref{HM e cell centers}}, satisfying the regularity assumptions {\normalfont \ref{HR a volumes}--\ref{HR b faces}}, and with mesh size $h$, we introduce the broken Sobolev space 
\[
W^{1,p}(\T) = \{u\in L^p(\Omega)\ :\quad u_{|K} \in W^{1,p}(K),\ \forall K\in \T\}\,.
\]
For functions $u\in W^{1,p}(\T)$, we also introduce the associated broken seminorm
\[
|u|_{W^{1,p}(\T)} = \left(\sum_{K\in\T}\|\nabla u\|_{L^p(K)}^p\right)^{\frac1p},\quad \text{for } p\in[1,\infty),\quad \text{and}\quad |u|_{W^{1,\infty}(\T)} = \max_{K\in\T} \|\nabla u\|_{L^\infty(K)},
\]
as well as the DG penalization term 
\begin{align*}
J_p(u) = \left(\sum_{\sigma\in\Eint}\frac{1}{\dd_\sigma^{p-1}}\|[u]\|_{L^{p}(\sigma)}^p\right)^{\frac{1}{p}},\quad \text{for } p\in[1,\infty),\quad \text{and}\quad J_\infty(u) =  \max_{\sigma\in\Eint}\dd_\sigma^{-1}\|[u]\|_{L^{\infty}(\sigma)},
\end{align*}
where $[u](x) = |u_{|K}(x) - u_{|L}(x)|$ denotes the jump of the traces at almost every $x\in\sigma$, $\sigma\in\EKL$. From there, the DG seminorm reads
\[
|u|_{{\rm DG},p} = |u|_{W^{1,p}(\T)} + J_p(u),
\]
and the corresponding norm
\[
\|u\|_{{\rm DG},p} = |u|_{{\rm DG},p} + \|u\|_{L^p(\Omega)}.
\]
In the following, we use the notation $u_E$ to denote the average of $u\in W^{1,p}(\T)$ over a measurable subset $E\subset\Omega$. For later use, we define $\Pi:W^{1,p}(\T)\to \R^\T$ the projection operator defined by $\Pi(u)_K = u_K$.

We will show that the following high-order extension of Theorem~\ref{thm:Sobembedding} and Theorem~\ref{thm:Trudinger1} holds in broken Sobolev spaces.
\begin{theorem}[Discrete Sobolev and Trudinger inequalities in broken Sobolev spaces]\label{thm:DG}
    Under the assumptions of Theorem~\ref{thm:Sobembedding}, there holds for all $u\in W^{1,p}(\T)$
     \begin{equation}\label{eq:SobembDG}
 \|u\|_{L^q(\Omega)}\leq C^{\rm DG}_{p,q}\|u\|_{{\rm DG},p},
 \end{equation}
  with a constant $C^{\rm DG}_{p,q}$ depending only on $\Omega$, $\mu$, $\lambda$, $p$ and $q$ which satisfies the asymptotics~\eqref{eq:asymp}. Moreover, there exists an explicit constant $\alpha_\star^{\rm DG}>0$ depending only on  $\Omega$, $\lambda$ and $\mu$ such that for all $0\leq\alpha<\alpha_\star^{\rm DG}$, there is $C_\alpha>0$ which depends only on  $\Omega$, $\lambda$, $\mu$ and $\alpha$ such that
 \begin{equation}\label{eq:MT_DG}
\sup_{\substack{u\in W^{1,d}(\T)\\\|u\|_{{\rm DG},d}\leq 1}} \int_\Omega\exp\left(\alpha |u|^{\frac{d}{d-1}}\right)\leq C_\alpha.
\end{equation}
\end{theorem}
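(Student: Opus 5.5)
The plan is to split $u\in W^{1,p}(\T)$ as $u=\Pi(u)^\M+(u-\Pi(u)^\M)$, where $\Pi(u)^\M=\sum_K u_K\mathbf{1}_K$ is the piecewise constant function of cell averages. The piecewise constant part is handled by Theorem~\ref{thm:Sobembedding}. The fluctuation is handled cell by cell through local Poincar\'e--Sobolev inequalities. This way the asymptotics in $p,q$ come only from Theorem~\ref{thm:Sobembedding}, and the Trudinger inequality \eqref{eq:MT_DG} then follows exactly as in the proof of Theorem~\ref{thm:Trudinger1}.

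\emph{Step 1: the piecewise constant part.} We have $\|\Pi(u)\|_{0,p}\le\|u\|_{L^p(\Omega)}$ by Jensen's inequality. We then bound $|\Pi(u)|_{1,p}$ by $|u|_{{\rm DG},p}$. For $\sigma\in\EKL$ we write
\[
u_L-u_K=(u_L-u_\sigma^L)+(u_\sigma^L-u_\sigma^K)+(u_\sigma^K-u_K),
\]
where $u_\sigma^K$ is the average over $\sigma$ of the trace of $u_{|K}$.
\begin{itemize}
\item The middle term satisfies $|u_\sigma^L-u_\sigma^K|\le |\sigma|^{-1/p}\|[u]\|_{L^p(\sigma)}$.
\item For the outer terms we need a trace-Poincar\'e estimate $|u_\sigma^K-u_K|\lesssim h^{1-d/p}\|\nabla u\|_{L^p(K)}$. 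This is where the uniform star-shapedness \ref{HR a volumes} with respect to $B(x_K,\mu h)$ enters: it gives a Poincar\'e inequality on $K$ and a trace inequality on $\sigma$ with constants depending only on $\mu$ and $d$. One proves it by scaling and the standard argument for domains star-shaped with respect to a ball, or by integrating along rays from the ball $B(x_K,\mu h)$ to $\sigma$.
\end{itemize}
Lemma~\ref{lem:regularity} gives $|\sigma|\sim h^{d-1}$, $|D_\sigma|\sim h^d$ and $\dd_\sigma\sim h$. Hence $|\sigma|^p|D_\sigma|^{1-p}|u_L-u_K|^p$ is bounded by $\dd_\sigma^{1-p}\|[u]\|^p_{L^p(\sigma)}+\|\nabla u\|^p_{L^p(K\cup L)}$. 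Since each cell has a bounded number of faces (again from the regularity assumptions), summing over faces gives
\[
|\Pi(u)|_{1,p}\lesssim |u|_{{\rm DG},p}.
\]
Theorem~\ref{thm:Sobembedding} then yields $\|\Pi(u)^\M\|_{L^q}\le C\,C^{\rm Sob}_{p,q}\|u\|_{{\rm DG},p}$, with $C$ independent of $p,q$.

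\emph{Step 2: the fluctuation.} On each $K$, the local Sobolev--Poincar\'e inequality on the star-shaped cell gives
\[
\|u-u_K\|_{L^q(K)}\le C_{p,q}\,h^{1+d/q-d/p}\|\nabla u\|_{L^p(K)}.
\]
The constant $C_{p,q}$ has the same continuous asymptotics \eqref{Constant Sobolev asymptotic}, since it comes from the same Riesz potential argument carried out on the cell with its interior ball as cone base. To obtain it, one applies Proposition~\ref{prop:preciseembedding}'s convolution argument directly to the continuous representation $|u(x)-u_K|\lesssim\int_K|\nabla u(\xi)||x-\xi|^{1-d}\dd\xi$, valid for domains star-shaped with respect to a ball of radius $\mu h$ (Gilbarg--Trudinger, Lemma~7.16), with constant $(h/\mu h)^d$.

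We then sum over cells, using $q\ge p$ and $h\le\diam(\Omega)$:
\[
\sum_K\|\nabla u\|^{q}_{L^p(K)}\le\Big(\sum_K\|\nabla u\|_{L^p(K)}^p\Big)^{q/p}.
\]
This gives $\|u-\Pi(u)^\M\|_{L^q}\lesssim C_{p,q}|u|_{W^{1,p}(\T)}$. Adding the two steps proves \eqref{eq:SobembDG}, with a constant that inherits \eqref{eq:asymp}. For $q=\infty$ one takes a maximum over cells instead of a sum.

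\emph{Step 3: Trudinger.} We expand the exponential in a series as in the proof of Theorem~\ref{thm:Trudinger1}, with $\int_\Omega|u|^{kd'}=\|u\|_{L^{kd'}}^{kd'}$ and $C^{\rm DG}_{d,kd'}\lesssim (kd')^{1/d'}$. The ratio test then gives convergence for $\alpha<\alpha_\star^{\rm DG}$.

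The main obstacle is Step 2, together with the trace estimate in Step 1. Both need local inequalities whose constants are uniform in $K$, which is why $\mu$ appears in the constants. They must also keep the exact Lebesgue-exponent asymptotics; this works because the local Riesz potential bound has exponent-independent geometric constants.
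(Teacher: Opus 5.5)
Your argument follows the paper's proof of this theorem almost step by step. It uses the same splitting $u=\Pi(u)^\M+(u-\Pi(u)^\M)$ and the same bound $|\Pi(u)|_{1,p}\lesssim|u|_{{\rm DG},p}$, obtained via face averages, a trace inequality and the cell Poincar\'e inequality. It also uses the same cell-wise Riesz potential bound for the fluctuation (the interior ball $B(x_K,\mu h)$ serves as averaging set), the same summation via $\ell^p\subset\ell^q$, and the same series argument for Trudinger.

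The one step that fails as justified is Step~2 at the endpoint $p=1$, $q=d/(d-1)$, which belongs to Case~C of Theorem~\ref{thm:Sobembedding}. There the potential argument gives no strong-type bound. Young's inequality would need $|\cdot|^{1-d}\mathbf{1}_{B(0,h)}\in L^{d/(d-1)}(\R^d)$, which is false because $|z|^{-d}$ is not integrable at $0$. The Hardy--Littlewood--Sobolev inequality needs $p>1$, and the Riesz potential of order one is only of weak type $(1,\frac{d}{d-1})$. This is why Proposition~\ref{prop:preciseembedding} drops the potential estimate at this endpoint and uses the $BV$ embedding instead. It is also why Lemma~\ref{lem:Poincare_cell} assumes $p>1$ on the critical line. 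On the rest of that line, $1<p<d$ and $q=p^*$, you must use Hardy--Littlewood--Sobolev as in Step~2 of that proposition, not Young's inequality. That is what gives the $p\to d^-$ asymptotics.

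The fix is the paper's: at $p=1$, $q=d/(d-1)$, skip the splitting entirely. Any $u\in W^{1,1}(\T)$ lies in $BV(\Omega)$, and its distributional gradient is the broken gradient plus the jump measures on interior faces. Hence $|Du|(\Omega)=|u|_{W^{1,1}(\T)}+J_1(u)=|u|_{{\rm DG},1}$, since $\dd_\sigma^{p-1}=1$ when $p=1$. The continuous embedding $BV(\Omega)\subset L^{d/(d-1)}(\Omega)$ then gives the inequality, and no asymptotics are needed at this corner. Alternatively, you could prove a scale-invariant $L^1\to L^{d/(d-1)}$ Poincar\'e--Sobolev inequality on cells star-shaped with respect to $B(x_K,\mu h)$ with another tool, for example a truncation argument that upgrades the weak-type bound. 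The potential estimate alone will not do it. Once this corner is handled, the rest of your proof is correct. The Step~1 constant does not depend on $q$, so \eqref{eq:asymp} and the convergence of the Trudinger series follow from $\Lambda_{p,q}$ as you describe.
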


Let us emphasize that the above result holds for general piecewise Sobolev functions, i.e., we do not need to assume that the functions are polynomials on each cell of $\T$. The rest of the section is dedicated to the proof of Theorem~\ref{thm:DG}. It is essentially a corollary of Theorem~\ref{thm:Sobembedding} and Theorem~\ref{thm:Trudinger1}, combined with results that are recalled in the following preliminary lemmas. Overall, most elements of the proof of \eqref{eq:SobembDG} can be found in the literature and we refer the reader to the books \cite{fem_book, dg_book, DroniouEtAl2018, hho_book} and to the papers \cite{arnold02, brenner03, buffa09} for more details. In contrast to the literature, here, we carefully track the dependency of constants in Lebesgue exponents in order to recover the asymptotics~\eqref{eq:asymp}. Moreover, this allows for the derivation of a new Trudinger inequality \eqref{eq:MT_DG} in the DG framework.

\begin{remark}[Discussion on the definition of the DG penalization term]
By defining alternatively the DG penalization term as
\[
J^*_p(u) = \left(\sum_{\sigma\in\Eint}\frac{1}{|D_\sigma|^{p-1}} \left| \int_\sigma [u] \right|^p\right)^{\frac{1}{p}},\quad \text{for } p\in[1,\infty),\quad \text{and}\quad J^*_\infty(u) =  \max_{\sigma\in\Eint}\left|D_\sigma\right|^{-1} \left| \int_\sigma [u] \right|,
\] 
we notice that for any $u \in W^{1,p}(\T)$, $|\Pi(u)|_{1,p} = J^*_p(\Pi(u))$. In particular, one can notice that the end of the proof of~\eqref{eq:SobembDG} (see below) consists in showing that $J_p^*(\Pi(u)) \leq C |u|_{{\rm DG},p}$ for some constant depending only on $d$, $\mu$ and $\lambda$. Therefore, defining an alternative DG norm associated to $J_p^*$ instead of $J_p$, and following the proof of Theorem~\ref{thm:DG}, yields a more refined estimate than~\eqref{eq:SobembDG}. However, since in 
the literature most authors work with the definition given by $J_p$ (as it is the quantity arising in the analysis of DG schemes), we state and prove the Sobolev inequalities with this definition of the DG penalization term.

Finally, thanks to Lemma~\ref{lem:regularity}, we also notice that we could alternatively define the DG penalization term $J_p$ with $h_\sigma$ instead of $\dd_\sigma$.
\end{remark}

The following lemma is a direct adaptation of \cite[Lemma 1.31]{hho_book} and earlier references therein. We only sketch its proof.

\begin{lemma}[Trace inequality in a pyramid]\label{lem:trace}
Under the assumptions of Theorem~\ref{thm:Sobembedding}, there is a constant $C>0$ depending only on $d$, $\mu$ and $\lambda$ such that for any $p\in[1,\infty]$, $K\in\T$, $\sigma\in\E_K$ and $u\in W^{1,p}(K)$, one has 
\begin{equation}
\|u\|_{L^p(\sigma)}\leq C h^{-\frac{1}{p}}\left(\|u\|_{L^p(P_{K\sigma})} + h\|\nabla u\|_{L^p(P_{K\sigma})}\right).
\end{equation}
\end{lemma}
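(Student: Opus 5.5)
We prove the trace inequality by the classical divergence-theorem argument on the pyramid $P_{K\sigma}$, using the vector field pointing from the apex $x_K$. First take $p\in[1,\infty)$ and, by density, $u\in C^1(\overline{P_{K\sigma}})$. Set $w=|u|^p$ and $\phi(y)=y-x_K$, so $\operatorname{div}\phi=d$. Let $\gamma$ be a lateral face of $P_{K\sigma}$, i.e.\ a face containing $x_K$. Then $\phi\cdot n=0$ on $\gamma$, because $\phi(y)$ is tangent to the hyperplane through the apex. On the base $\sigma$ we have $\phi\cdot n_{K\sigma}=\dd(x_K,H_\sigma)=:\delta_{K\sigma}$, which is constant. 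The divergence theorem gives
\[
\delta_{K\sigma}\int_\sigma |u|^p=\int_{P_{K\sigma}}\left(d\,|u|^p+p|u|^{p-1}\,\mathrm{sgn}(u)\,\nabla u\cdot(y-x_K)\right)\dd y .
\]

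We then bound the right-hand side. Since $|y-x_K|\le h$ on $P_{K\sigma}\subset K$, Young's inequality $p|u|^{p-1}|\nabla u|\,h\le (p-1)|u|^p+h^p|\nabla u|^p$ gives
\[
\delta_{K\sigma}\|u\|_{L^p(\sigma)}^p\le (d+p-1)\|u\|^p_{L^p(P_{K\sigma})}+h^p\|\nabla u\|^p_{L^p(P_{K\sigma})}.
\]
Taking $p$-th roots, $(d+p-1)^{1/p}\le d$ is bounded uniformly in $p$, and $(a^p+b^p)^{1/p}\le a+b$.

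The key geometric input is a lower bound $\delta_{K\sigma}\ge \mu h$. It follows from \ref{HR a volumes}. The cell $K$ is star-shaped with respect to $B(x_K,\mu h)$ and $\sigma\subset\partial K$. If some point $z$ of $B(x_K,\mu h)$ lay on the far side of $H_\sigma$, then segments from $z$ to points of $\sigma$ would leave $\overline K$ immediately on crossing $\sigma$, contradicting star-shapedness. Hence the ball lies on one side of $H_\sigma$, and $\dd(x_K,H_\sigma)\ge\mu h$. Therefore
\[
\|u\|_{L^p(\sigma)}\le d\,\mu^{-1/p}h^{-1/p}\left(\|u\|_{L^p(P_{K\sigma})}+h\|\nabla u\|_{L^p(P_{K\sigma})}\right),
\]
and $\mu^{-1/p}\le\mu^{-1}$ gives a constant uniform in $p$. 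The density step is justified because $P_{K\sigma}$ is a Lipschitz domain and the trace operator is continuous. Two extensions remain:
\begin{itemize}
\item \textbf{$p=\infty$:} let $p\to\infty$ in the uniform estimate, or argue directly via the case $p<\infty$ and take the limit of $L^p$ norms on bounded sets.
\item \textbf{Nonconvex $\sigma$ or $K$:} $P_{K\sigma}$ is still the union of segments from $x_K$, and the divergence identity still holds on this Lipschitz polyhedron.
\end{itemize}

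The main obstacle is the geometric step $\dd(x_K,H_\sigma)\gtrsim \mu h$. One has to argue carefully that star-shapedness with respect to the whole ball forces the ball to lie on one side of every face hyperplane. Concretely, take $s$ in the relative interior of $\sigma$ and suppose $z\in B(x_K,\mu h)$ lies strictly beyond $H_\sigma$ relative to $K$. Then the open segment from $z$ to $s$ approaches $s$ from the exterior side of $\sigma$ near $s$. Points on it close to $s$ are outside $\overline K$, since near a relative interior point of a face $K$ lies locally on one side. This contradicts star-shapedness. The constant then depends only on $d$ and $\mu$; the stated dependence on $\lambda$ is harmless.
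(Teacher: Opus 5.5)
Your proof is correct and is essentially the paper's argument: the divergence theorem on $P_{K\sigma}$ with the radial field $y-x_K$, then Young's inequality. The paper rescales the field by $|\sigma|/(d|P_{K\sigma}|)$, which amounts to your division by $\dd(x_K,H_\sigma)\ge\mu h$, and your limit $p\to\infty$ replaces the paper's continuity argument for $p=\infty$.

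The only slip is the claim that $P_{K\sigma}$ is Lipschitz, which fails e.g.\ for the annular face of Figure~\ref{fig:diamonds}. Run the density step on $K$ instead: it is star-shaped with respect to a ball, hence Lipschitz, so smooth functions on $\overline K$ are dense and the trace on $\sigma\subset\partial K$ is continuous. The divergence identity itself holds on any polyhedron for functions smooth up to the boundary.
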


\begin{proof} Let $p<+\infty$, the $L^\infty$ case trivially holds since in this case $u$ is continuous and $\|u\|_{L^\infty(\sigma)}\leq \|u\|_{L^\infty(P_{K\sigma})}$. Let $\phi_{K\sigma}:P_{K\sigma}\to\R^d$ be defined by $\phi_{K\sigma}(x) = \frac{|\sigma|}{d|P_{K\sigma}|}(x-x_K)$. Using the fact that on $\partial P_{K\sigma}\setminus \sigma$, $\phi_{K\sigma}$ is almost everywhere orthogonal to the normal vector of $\partial P_{K\sigma}$, and that $(x-x_K)\cdot n_{K\sigma} = \dd(x_K,H_\sigma)$ for $x\in\sigma$, one has by the divergence theorem that
\[
\|u\|_{L^p(\sigma)}^p = \int_{P_{K\sigma}}|u|^p\nabla\cdot\phi_{K\sigma} + p\int_{P_{K\sigma}}{\rm sign}(u)|u|^{p-1}\phi_{K\sigma}\cdot\nabla u\,,
\]
which leads to, using H\"older inequality and the definition of $\phi_{K\sigma}$,
\[
\|u\|_{L^p(\sigma)}^p\leq\frac{|\sigma|}{|P_{K\sigma}|}\|u\|_{L^p(P_{K\sigma})}^p + p \frac{h|\sigma|}{d|P_{K\sigma}|}\|\nabla u\|_{L^p(P_{K\sigma})}\|u\|_{L^p(P_{K\sigma})}^{p-1}.
\]
Using first the properties of Lemma~\ref{lem:regularity} to bound geometric quantities, and then Young's inequality $ab\leq \tfrac{a^p}{p} + \tfrac{b^{p'}}{p'}$ for the second term of the right-hand side with $a=h\|\nabla u\|_{L^p(P_{K\sigma})}$ and $b = \|u\|_{L^p(P_{K\sigma})}^{p-1}$, one finds, for some constant $C$ depending only on $d,\mu,\lambda$ that 
\[
\|u\|_{L^p(\sigma)}^p\leq C h^{-1}\left((d+p-1)\|u\|_{L^p(P_{K\sigma})}^p + h^p\|\nabla u\|_{L^p(P_{K\sigma})}^p\right).
\]
This leads to the desired inequality for a different constant $C$, still depending only on $d$, $\mu$, and $\lambda$.
\end{proof}

\begin{lemma}[Poincaré--Sobolev-Wirtinger in a cell]\label{lem:Poincare_cell}
Under the assumptions of Theorem~\ref{thm:Sobembedding} and assuming moreover that $p>1$ when $\frac1d+\frac{1}{q}-\frac{1}{p} = 0$, there is a constant $C_d$ which depends only on $d$, such that for any $K\in\T$ and any $u\in W^{1,p}(K)$, one has 
\[
\|u - u_K\|_{L^q(K)}\leq C_d\frac{|K|^{\frac1d+\frac{1}{q}-\frac{1}{p}}}{\mu^d} \Lambda_{p,q}\|\nabla u\|_{L^p(K)}\,,
\]
where $\Lambda_{p,q}$ is defined in \eqref{eq:constantSob}.
\end{lemma}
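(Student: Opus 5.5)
The plan is to mimic Trudinger's continuous argument inside a single cell, using only that $K$ is star-shaped with respect to the ball $B_K:=B(x_K,\mu h)$, and then reuse the convolution estimates from Step~1 and Step~2 of the proof of Proposition~\ref{prop:preciseembedding}.

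\emph{Pointwise representation.} By density we may take $u\in C^1(\overline K)$. For $x\in K$ and $y\in B_K$, the segment $[x,y]$ lies in $K$ by star-shapedness. Integrating \eqref{Path historic} in $y$ over $B_K$ and using the standard change of variables $\xi=x+t(y-x)$, as in \cite[Lemma~7.16]{GilbargTrudinger2001}, gives
\[
|u(x)-u_{B_K}|\le \frac{h_K^d}{d\,|B_K|}\int_K\frac{|\nabla u(\xi)|}{|x-\xi|^{d-1}}\,\dd\xi .
\]
Since $|B_K|=\Bd\mu^dh^d$ and $h_K\le h$, the prefactor is $C_d\mu^{-d}$. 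This is where the factor $\mu^{-d}$ in the statement comes from.

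\emph{$L^q$ bound for the average over $B_K$.} Set $w=|\cdot|^{1-d}\mathbf 1_{|\cdot|\le h_K}$. I would then repeat Step~1 of the proof of Proposition~\ref{prop:preciseembedding}: apply Young's convolution inequality with $s^{-1}=1+q^{-1}-p^{-1}$ when $\frac1d+\frac1q-\frac1p>0$, and the Hardy--Littlewood--Sobolev bound of Step~2 in the critical case with $p>1$. This yields
\[
\|u-u_{B_K}\|_{L^q(K)}\le C_d\,\mu^{-d}\,h_K^{1+\frac dq-\frac dp}\,\Lambda_{p,q}\,\|\nabla u\|_{L^p(K)},
\]
where the power of $h_K$ is read off from $\|w\|_{L^s}$ as in that proof. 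To express this in terms of $|K|$ rather than $h_K$, I would use $|K|\ge|B_K|\ge \Bd\mu^dh_K^d$ (recall $h_K\le h$, though the paper uses $h$ here). Only this direction of the comparison is available, so the exponent on the sign-relevant side has to be checked. I expect this to be the main obstacle, and it is in fact subtle: $1+\frac dq-\frac dp\ge0$, so one needs an upper bound of $h_K$ by $|K|^{1/d}$. That bound holds, $h_K\le h\le (|K|/(\Bd\mu^d))^{1/d}$, at the price of additional powers of $\mu^{-1}$. The statement only shows $\mu^{-d}$, so I would track carefully whether the extra powers can be absorbed, for instance by averaging over the larger ball instead. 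If they cannot, I would accept a slightly different power of $\mu$.

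\emph{Replacing $u_{B_K}$ by $u_K$.} Write
\[
\|u-u_K\|_{L^q(K)}\le \|u-u_{B_K}\|_{L^q(K)}+|K|^{1/q}\,|u_K-u_{B_K}|,
\]
and note that $|u_K-u_{B_K}|\le |K|^{-1}\|u-u_{B_K}\|_{L^1(K)}\le |K|^{-1/q}\|u-u_{B_K}\|_{L^q(K)}$ by Hölder's inequality. So the second term is bounded by the first, and changing between the two averages costs only a factor $2$, which gives the lemma.
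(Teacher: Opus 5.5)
Your argument matches the paper's up to the last step. You use the same representation over $B(x_K,\mu h)$ with prefactor $(d\mu^d\Bd)^{-1}$, the same factor-$2$ exchange of $u_{B_K}$ and $u_K$, and the same Hardy--Littlewood--Sobolev treatment of the critical case, where the exponent of $|K|$ is $0$ and no size factor appears. The gap is the one you flagged and left open. With the fixed convolution kernel $w=|\cdot|^{1-d}\mathbf 1_{|\cdot|\le h_K}$ you obtain $h_K^{1+\frac dq-\frac dp}$. The only way back to $|K|$ is $h_K\le h\le(|K|/(\Bd\mu^d))^{1/d}$, which costs an extra factor $\mu^{-(1+\frac dq-\frac dp)}$, up to $\mu^{-1}$. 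So as written you prove the lemma with $\mu^{-d}$ replaced by $\mu^{-d-1+\frac dp-\frac dq}$, which is not the stated bound. Averaging over a larger ball does not help. The loss occurs in the potential estimate, not in the representation formula, and $B(x_K,\mu h)$ is the only ball for which star-shapedness is available.

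The missing idea is to drop the translation-invariant kernel truncated at the diameter and bound $Tf(x)=\int_K|x-\xi|^{1-d}f(\xi)\,\dd\xi$ through the measure of $K$ instead. Since $|\cdot|^{-(d-1)s}$ is radially decreasing, for every $x$ the bathtub (rearrangement) principle gives $\int_K|x-\xi|^{-(d-1)s}\,\dd\xi\le\int_{B(x,R)}|x-\xi|^{-(d-1)s}\,\dd\xi$ with $\Bd R^d=|K|$, and the same holds with the roles of $x$ and $\xi$ swapped. Now apply the H\"older splitting from the proof of Young's inequality, i.e.\ the Schur-type Young inequality for kernels with $\sup_x\|k(x,\cdot)\|_{L^s}$ and $\sup_\xi\|k(\cdot,\xi)\|_{L^s}$ bounded. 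This gives $\|Tf\|_{L^q(K)}\le\Bd^{1/s}\Lambda_{p,q}R^{1+\frac dq-\frac dp}\|f\|_{L^p(K)}$ with $R^{1+\frac dq-\frac dp}=(|K|/\Bd)^{\frac1d+\frac1q-\frac1p}$, and no power of $\mu$ enters. This is exactly \cite[Lemma~7.12]{GilbargTrudinger2001}, which the paper invokes at this point. A fixed kernel truncated at radius $R$ would not work, since points $x,\xi\in K$ can be at distance $h_K>R$; the pointwise-in-$x$ comparison is what handles this. For what it is worth, the proof of Theorem~\ref{thm:DG} immediately bounds $|K|^{\frac1d+\frac1q-\frac1p}$ by a multiple of $h^{1+\frac dq-\frac dp}$. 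So your $h_K$-version would suffice for that application, but it does not give the lemma as stated.
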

\begin{proof}
Let us write $B=B(x_K,\mu h)$. First observe that
\[
\|u - u_K\|_{L^q(K)}\leq \|u - u_B\|_{L^q(K)} + |K|^{1/q}|u_K-u_B|,
\]
and so by H\"older inequality one has 
\[
\|u - u_K\|_{L^q(K)}\leq 2\|u - u_B\|_{L^q(K)}.
\]
Then, since $K$ is star-shaped with respect to $B$, for any $x\in K$ one has
\[
u(x) - u_B = \frac{1}{|B|}\int_B \int_0^1 \nabla u(t x + (1-t)y)\cdot(x-y)\dd t\dd y.
\]
Using the change of variable $z = tx + (1-t)y$ one finds
\[
|u(x) - u_B|\leq\frac{1}{|B|}\int_0^1\int_{tx+(1-t)B}|\nabla u(z)||x-z|\frac{1}{(1-t)^{d+1}} \dd z\dd t.
\]
Now, since $1-t = \frac{|x-z|}{|x-y|}\geq \frac{|x-z|}{h}$ one obtains,
\[
|u(x) - u_B|\leq\frac{1}{|B|}\int_K\int_0^{1-\frac{|x-z|}{h}}|\nabla u(z)||x-z|\frac{1}{(1-t)^{d+1}} \dd t\dd z,
\]
which yields after computing the integral in $t$ that
\[
|u(x) - u_B| \leq \frac{h^d}{d|B|} \int_K \frac{|\nabla u(z)|}{|x-z|^{d-1}}\dd z =\frac{1}{d \mu^d \Bd} \int_K \frac{|\nabla u(z)|}{|x-z|^{d-1}}\dd z.
\]
From there one concludes with \cite[Lemma 7.12]{GilbargTrudinger2001} in the case $\frac{1}{d}+\frac{1}{q}>\frac{1}{p}$ and with \cite[Theorem 4.3]{liebloss} in the critical case $\frac{1}{d}+\frac{1}{q}=\frac{1}{p}$.
\end{proof}

We can now turn to the proof of the main result of this section.

\begin{proof}[Proof of Theorem~\ref{thm:DG}]
Once  \eqref{eq:SobembDG} is satisfied, the proof of \eqref{eq:MT_DG} follows the lines of the proof of Theorem~\ref{thm:Trudinger1}, so that we only detail \eqref{eq:SobembDG}. We may assume that  $p>1$ if $\frac1d+\frac{1}{q}-\frac{1}{p} = 0$. Indeed, in the remaining case $p=1$ and $q = \frac{d}{d-1}$ then we have that $\|u\|_{BV(\Omega)} \leq C_d \|u\|_{{\rm DG},1}$ (see \cite[Lemma 5.2]{dg_book}) and the result follows from the $BV(\Omega)$ embedding into $L^{\frac{d}{d-1}}$. One has by Theorem~\ref{thm:Sobembedding}
\[
\|u\|_{L^q(\Omega)}\leq \|u-\Pi(u)^\M\|_{L^q(\Omega)} + \|\Pi(u)\|_{0,q}
\leq \left(\sum_{K\in\T}\|u-u_K\|_{L^q(K)}^q\right)^{\frac{1}{q}} + C \Lambda_{p,q}\|\Pi(u)\|_{1,p}.
\]
Using Lemma~\ref{lem:Poincare_cell}, the first term is estimated by
\[
\left(\sum_{K\in\T}\|u-u_K\|_{L^q(K)}^q\right)^{\frac{1}{q}}\leq C_d\frac{h^{1+\frac{d}{q}-\frac{d}{p}}}{\mu^d} \Lambda_{p,q}\left(\sum_{K\in\T}\|\nabla u\|_{L^p(K)}^q\right)^{\frac{1}{q}}.
\]
Observe that, since $\frac1d \ge \frac1d+\frac{1}{q}-\frac{1}{p}\geq 0$, the numerator above is bounded by $\max(1,\diam(\Omega))$. Moreover, since $p/q\leq 1$, we apply the sub-additivity of the function $x \mapsto x^{p/q}$ on the right-hand side terms of the last inequality to get
\[
\left(\sum_{K\in\T}\|u-u_K\|_{L^q(K)}^q\right)^{\frac{1}{q}}\leq C\Lambda_{p,q}|u|_{W^{1,p}(\T)},
\]
for a constant $C$ depending only on $\mu$ and $\Omega$. Let us now inspect the second term. For this, we introduce the face average
\[
\left\langle u_{|K}\right\rangle_\sigma = \frac{1}{|\sigma|}\int_\sigma u_{|K}\,,
\]
which we recall is well defined since $u_{|K}$ has trace in $L^p$ by Lemma~\ref{lem:trace}. In the following we only write the case $p<\infty$ and the case $p=\infty$ is treated the same way. Using Lemma~\ref{lem:regularity} and $(a+b+c)^p\leq 3^{p-1}(a^p+b^p+c^p)$, one has, for some constant depending only on $\Omega$, $\mu$ and $\lambda$ that may change from line to line that
\begin{align*}
|\Pi(u)|_{1,p}^p &= \sum_{\substack{\sigma\in\Eint\\\sigma \in\EKL}}|\sigma|^p|D_\sigma|^{1-p} |u_K-u_L|^p\\
&\leq 3^{p-1}\sum_{\substack{\sigma\in\Eint\\\sigma \in\EKL}}|\sigma|^p|D_\sigma|^{1-p}\left(\left|u_K-\left\langle u_{|K}\right\rangle_\sigma\right|^p + \left|u_L-\left\langle u_{|L}\right\rangle_\sigma\right|^p\right) + C^p 3^{p-1}J_p(u)^p.
\end{align*}
For all $K$ and $\sigma\in\E_K\cap\Eint$ one has by Jensen's inequality that
\[
\left|u_K-\left\langle u_{|K}\right\rangle_\sigma\right|^p= |\left\langle u_{|K}-u_K\right\rangle_\sigma|^p\leq\frac{1}{|\sigma|}\|u_{|K}-u_K\|_{L^p(\sigma)}^p.
\]
Therefore, by  Lemma~\ref{lem:trace}, property~\ref{CR volsig}, and the elementary inequality $(x+y)^p \leq 2^{p-1}(x^p+y^p)$ for all $x, y \geq 0$, one has 
\[
\left|u_K-\left\langle u_{|K}\right\rangle_\sigma\right|^p\leq C^p h^{-d}\left(\|u-u_K\|^p_{L^p(P_{K\sigma})} +  h^p\|\nabla u\|^p_{L^p(P_{K\sigma})} \right).
\]
Now, since $|\sigma|^p|D_\sigma|^{1-p}\leq C^p h^{d-p}$, it holds
\begin{align*}
    \sum_{\substack{\sigma\in\Eint\\\sigma \in\EKL}}|\sigma|^p|D_\sigma|^{1-p}\,\left|u_K-\left\langle u_{|K}\right\rangle_\sigma\right|^p &\leq C^p h^{d-p} \sum_{\substack{\sigma\in\Eint\\\sigma \in\EKL}} h^{-d} \left(\|u-u_K\|^p_{L^p(P_{K\sigma})} +  h^p\|\nabla u\|^p_{L^p(P_{K\sigma})} \right)\\ 
    &\leq C^p h^{-p} \sum_{K \in \T} \left(\|u-u_K\|^p_{L^p(K)} +  h^p\|\nabla u\|^p_{L^p(K)} \right),
\end{align*}
where we have used assumption~\ref{HR a volumes}. Finally, using Lemma~\ref{lem:Poincare_cell} and the fact that $\Lambda_{p,p}$ does not depend on $p$ one finds 
\[
\sum_{\substack{\sigma\in\Eint\\\sigma \in\EKL}}|\sigma|^p|D_\sigma|^{1-p}\, \left|u_K-\left\langle u_{|K}\right\rangle_\sigma\right|^p\leq C^p |u|_{W^{1,p}(\T)}^p,
\]
so that
\[
|\Pi(u)|_{1,p}\leq C |u|_{{\rm DG},p},
\]
for some constant depending only on $d$, $\mu$ and $\lambda$. It is easily seen from Jensen's inequality that, additionally,  $\|\Pi(u)\|_{0,p}$ satisfies the same bound. This concludes the proof.
\end{proof}

\begin{remark}\label{rk:DG}
All the other main results of the paper, namely Theorem~\ref{thm:PoincarSob}, Theorem~\ref{thm:PoincarSobDir}, Theorem~\ref{thm:GagliardoNirenbergGeneral}, Proposition~\ref{prop:Trudinger2}, Proposition~\ref{prop:Trudinger3}, and Proposition~\ref{prop:Trudinger4} generalize to DG norms in the framework of broken Sobolev spaces. The proofs can be adapted in the same way as for Theorem~\ref{thm:DG}. For the sake of conciseness we leave them to the reader. 
\end{remark}

\begin{remark}[Extensions to hybrid and related methods]\label{rem:hybrid}
For non-conforming numerical methods such as Discrete Duality Finite Volume (DDFV), Hybrid Finite Volume (HFV), Gradient Discretization Method (GDM), or Hybrid High-Order (HHO), a function, and more precisely its gradient, is approximated through additional degrees of freedom located generally on the cell faces or in the dual mesh. Discrete functional inequalities are essential to the analysis of these methods and are derived partly as a consequence of the piecewise constant cell-based discrete functional inequalities, as well as additional tools such as trace related inequalities \cite{ badia2026,hho_book}. The generalization in these various settings of the inequalities derived here is an interesting question that is left as an open problem. We refer for instance to \cite{BCCHF} for DDFV, to \cite{CHHLM, DroniouEtAl2018} for HFV / GDM and to \cite{hho_book} for HHO for existing examples of such generalizations.
\end{remark}

\section{Discussion and applications}\label{sec:discussions}

In the following, we first present an application of our newly derived Trudinger inequality to the analysis of a discretized Keller--Segel system in Section~\ref{sec:kellersegel}. Then, we investigate numerically the optimality of the constant $\alpha$ in our discrete Trudinger inequality \eqref{eq:MT5} in Section~\ref{sec:moser}. Finally, in Section~\ref{sec:defect}, we derive a complementary result (Proposition~\ref{prop:Sobnonembed}), quantifying the defect in the non-embedding $W^{1,d}(\Omega)\not\subset L^\infty(\Omega)$ in terms of the mesh size.

\subsection{Lower boundedness of the discrete Keller--Segel entropy}\label{sec:kellersegel}

One of the initial motivations for this paper is the numerical analysis of finite volume approximation of Keller--Segel type systems in spatial dimension $d=2$. The minimal parabolic-parabolic Keller--Segel system writes
\begin{equation}\label{KS}
\begin{aligned}
\partial_t u &=  \nabla\cdot\left(\nabla u-u\nabla v\right)\,,& x\in\Omega\,, \\
\partial_t v &= \Delta v + u\,,& x\in\Omega\,,
\end{aligned}
\end{equation}
with homogeneous Neumann boundary conditions for $u$ and $v$ on $\partial\Omega$. The analysis of this system relies on a crucial entropy estimate, yielding that the functional 
\begin{equation}\label{entropy_KS}
\mathcal{H}(u,v) = \int_\Omega u\ln u\,\mathrm{d}x -\int_\Omega uv\,\mathrm{d}x + \frac12\int_\Omega |\nabla v|^2\,\mathrm{d}x,
\end{equation}
decays along solutions. 
Because of the non-positive middle term, this decay does not automatically provide a uniform in time control of the solution on (say) $\|\nabla v\|_{L^2(\Omega)}$.
However, for an initial mass
\begin{equation*}
M = \int_\Omega u\, \mathrm{d}x,
\end{equation*}
below some positive mass threshold, as a consequence of the Moser--Trudinger inequality the entropy remains bounded from below (on arbitrary finite time intervals, and uniformly in time if a degradation term is added in the second equation), leading to global in time existence and results on the long-time behaviour \cite{NagaiSenbaYoshida97,GajZac98}. Here, we outline how to adapt this theory to a discrete setting.

There have been many works dealing with the numerical approximation of \eqref{KS} and variants, and we refer to \cite{Filbet2006} for the first analysed finite volume approximation and to the references in \cite{HTZ25} for a more recent bibliographical overview. For a finite volume discretization in dimension $d=2$, the discrete equivalent of the entropy reads, with the notations of the previous sections,
\begin{equation}\label{eq:entropKSdis}
\mathcal{H}^\M(u,v)  = \sum_{K\in\T}|K|\left(u_K\ln u_K - u_K v_K\right) +  \frac{1}{2}\sum_{\substack{\sigma\in\Eint\\\sigma \in\EKL}}\frac{|\sigma|}{\dd_\sigma} (v_L-v_K)^2.
\end{equation}
In the sequel, we assume, as for instance in~\cite{Filbet2006,HTZ25}, that the mesh $\M$, in the sense of {\normalfont \ref{HM a partition}--\ref{HM e cell centers}} and satisfying the regularity assumptions {\normalfont \ref{HR a volumes}--\ref{HR b faces}}, is in addition admissible in the sense of~\cite[Definition 9.1]{EGH00}, i.e., $[x_K,x_L]$ is orthogonal to $\sigma$ for all $\sigma\in\EKL$. Then, we notice, see Lemma~\ref{lem:equivseminorm} in Appendix~\ref{sec:appendix}, that the last term in the above discrete entropy functional can be rewritten as
\begin{equation}\label{eq:entropKSdis_second_form}
\mathcal{H}^\M(u,v)  = \sum_{K\in\T}|K|\left(u_K\ln u_K - u_K v_K\right) +  \frac{1}{4} |v|_{1,2}^2.
\end{equation}
Now, an adaptation of the argument in \cite[Lemma 3.4]{NagaiSenbaYoshida97} yields the following result, which relies on our newly derived discrete Trudinger inequality in Proposition~\ref{prop:Trudinger3}.

\begin{proposition}[Control of gradient norm by the discrete Keller--Segel entropy]\label{prop:entropyKS}
In dimension $d=2$, let $\M$ be a mesh in the sense of {\normalfont \ref{HM a partition}--\ref{HM e cell centers}} satisfying {\normalfont \ref{HR a volumes}--\ref{HR b faces}} and admissible in the sense of~\cite[Definition 9.1]{EGH00}. Let us define $M_\star = \beta_\star^0/4$, with $\beta_\star^0$ defined in Proposition~\ref{prop:Trudinger3}.
Then, for any $u,v \in\mathbb{R}^\T$, such that 
$u$ is non-negative component-wise and
\[
M = \sum_{K\in\T} |K|u_K < M_\star,
\]
one has
\[
\mathcal{H}^\M(u,v) \geq \delta |v|_{1,2}^2 -C-M \bar{v},
\]
for some $\delta>0$ and $C>0$ depending only on $M$, $\beta_\star^0$, $\Omega$ and $\lambda$.
\end{proposition}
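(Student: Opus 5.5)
The plan is to adapt the continuous argument of Nagai--Senba--Yoshida: control the indefinite term $\sum_K |K| u_K v_K$ through the entropy inequality (Legendre duality between $s\ln s$ and $e^t$) combined with the zero-mean discrete Trudinger inequality \eqref{eq:MT3} of Proposition~\ref{prop:Trudinger3}. Assume first $M>0$; the case $M=0$ (so $u=0$) is immediate since then $\mathcal{H}^\M = \frac14|v|_{1,2}^2$.

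\textbf{Step 1 (splitting).} Write $v_K = (v_K-\bar v) + \bar v$, so that
\[
\sum_{K}|K|u_Kv_K = \sum_K |K| u_K (v_K-\bar v) + M\bar v .
\]
This accounts for the term $-M\bar v$ in the claim, and it remains to bound $\sum_K|K|u_K(v_K-\bar v)$ from above.

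\textbf{Step 2 (discrete Jensen / Young duality).} For $\kappa>0$ to be fixed, use $ab\le a\ln a - a + e^b$ pointwise, applied with $a = u_K/\kappa$ and $b = \kappa (v_K-\bar v)$; equivalently, use Jensen's inequality for the probability weights $|K|u_K/M$. The Jensen route gives
\[
\sum_K |K| u_K\, \kappa (v_K-\bar v) \le \sum_K |K|u_K\ln u_K - M\ln M + M\ln\Big(\sum_K |K| e^{\kappa (v_K-\bar v)}\Big),
\]
valid for $u\ge0$ with $0\ln 0=0$. Dividing by $\kappa$ is wasteful, so I take $\kappa=1$ and instead rescale inside Trudinger.

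\textbf{Step 3 (Trudinger).} Apply \eqref{eq:MT3} to $w = v$ (exploiting $|e^{t}|\le e^{|t|}$) with $\beta<\beta_\star^0$:
\[
\ln \sum_K |K| e^{v_K-\bar v} \le \ln C_\beta + \tfrac1\beta |v|_{1,2}^2 .
\]
Combining with Steps 1--2 and \eqref{eq:entropKSdis_second_form},
\[
\mathcal{H}^\M(u,v) \ge \Big(\tfrac14-\tfrac{M}{\beta}\Big)|v|_{1,2}^2 + M\ln M - M\ln C_\beta - M\bar v .
\]

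\textbf{Step 4 (choice of constants).} Since $M<M_\star=\beta_\star^0/4$, choose $\beta\in(4M,\beta_\star^0)$, for instance $\beta = (4M+\beta^0_\star)/2$. Then $\delta = \frac14 - M/\beta>0$, and $C = M\ln C_\beta - M\ln M$ (bounded, since $M\ln M\ge -e^{-1}$) depends only on $M$, $\beta^0_\star$, $\Omega$ and $\lambda$.

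The main point of care is Step 2: it requires the exact entropy-inequality form with the correct normalization, namely that the $u\ln u$ term appears with coefficient exactly one so that it cancels against the entropy. This is why $\kappa=1$ is forced, and why the mass threshold arises as $\beta_\star^0/4$, the factor $4$ coming from the $\frac14$ in \eqref{eq:entropKSdis_second_form}. Everything else is the direct discrete analogue of the continuous argument.
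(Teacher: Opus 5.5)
Your proposal is correct and follows the paper's proof essentially verbatim: split off $M\bar v$, apply Jensen's inequality with weights $|K|u_K/M$, bound $\sum_K|K|e^{v_K-\bar v}$ by the zero-mean discrete Trudinger inequality \eqref{eq:MT3}, and choose $\beta\in(4M,\beta_\star^0)$. You also treat the trivial case $M=0$ separately, where the Jensen weights are undefined; the paper leaves this case implicit.
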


\begin{proof}
Rewriting the first term of~ \eqref{eq:entropKSdis_second_form}, one has 
\[
\mathcal{H}^\M(u,v)= -M\sum_{K\in\T}|K|\frac{u_K}{M}\ln \left(\frac{e^{v_K-\bar{v}}}{u_K}\right) - M \bar{v} + \frac14 |v|^2_{1,2}.
\]
By Jensen's inequality and the discrete Trudinger inequality~\eqref{eq:MT3} for $\beta<\beta_\star^0$
\begin{align*}
\mathcal{H}^\M(u,v)&\geq -M\ln\left(\frac{1}{M}\sum_{K\in\T}|K|e^{v_K-\bar{v}}\right) + \frac14|v|_{1,2}^2 - M \bar{v}\\
&\geq -M\ln\left(\frac{C_\beta}{M}\exp\left(\frac{|v|_{1,2}^2}{\beta}\right)\right) + \frac14|v|_{1,2}^2 - M \bar{v}\\
&= -M\ln\left(\frac{C_\beta}{M}\right) + \left(\frac14-\frac{M}{\beta}\right)|v|_{1,2}^2 - M \bar{v}.
\end{align*}
Pick $\beta \in \left(4M, \beta_\star^0\right)$ to conclude.
\end{proof}

Several discretizations for System~\eqref{KS} with the property that the entropy functional~\eqref{eq:entropKSdis} (slightly adapted or generalized if needed) decays at each time step have been proposed, e.g. in \cite{ShenXu20,Zhou21,GutRod21,AcoGuiRod23,LuChenLiLiu24,HuaGouShe26}; see also \cite{HTZ25} for a variant, and \cite{Filbet2006,ZhouSaito17,AlBuPePo19,BaiCarHu20,ChenLiuShen22} for the parabolic-elliptic case. Proposition~\ref{prop:entropyKS} opens the way to the theoretical analysis of the long-time behavior of those numerical schemes.

Observe that the value $M_\star$ that we obtain depends on the regularity of the mesh (but not on its size). The theoretical value of the mass threshold obtained in \cite{NagaiSenbaYoshida97,GajZac98}, which, radial case taken aside, is conjectured to be optimal (see \cite{GajZac98,FuestLankeit23} and references therein), is related to the best constant $\beta$ in Trudinger inequality. This best constant, first investigated by Moser \cite{Moser1971} for Dirichlet boundary conditions (corresponding to Proposition~\ref{prop:Trudinger4} with $\Gamma_0=\partial \Omega$), was derived by Chang and Yang \cite{ChangYang88} for zero-mean functions (corresponding to Proposition~\ref{prop:Trudinger3}). We do not hope to approach this (continuous) theoretical value with the techniques presented in this paper. In particular, our argument for proving~\eqref{eq:MT3} relies on the original proof of Trudinger~\cite{Trudinger67} rather than on Moser's or Chang and Yang's arguments leading to the optimal constants~\cite{Moser1971,ChangYang88}. We do not know whether these techniques can be adapted to the discrete setting. In Section~\ref{sec:moser} we give a discussion and a numerical illustration concerning the expected optimal constant for Dirichlet boundary condition.

\subsection{Best constant in the Moser--Trudinger inequality}\label{sec:moser}

In this section, let $\Gamma_0=\partial\Omega$. The method used in this paper to derive the constant $\alpha_\star^{\Gamma_0}$ in Proposition~\ref{prop:Trudinger4} is non-optimal as it follows the original strategy of Trudinger \cite{Trudinger67}, which was later improved by Moser \cite{Moser1971}. We leave it for future work to determine whether the latter can be adapted to the discrete setting. Nonetheless, let us explore here experimentally the best constant in \eqref{eq:MT5}. For this we restrict ourselves to dimension $d=2$ and we assume, without loss of generality, that the domain $\Omega$ contains the unit ball. From there we introduce the Moser functions, defined for $k\geq2$ by 
\begin{equation}\label{eq:moserfunc}
m_k(x) = \frac{1}{\sqrt{2\pi}}\left\{
\begin{aligned}
&\sqrt{\ln(k)}&&\text{if } |x| \leq \frac{1}{k},\\
&\frac{\ln(1/|x|)}{\sqrt{\ln(k)}}&&\text{if } \frac{1}{k}<|x| < 1,\\
&0&&\text{otherwise}.
\end{aligned}\right.
\end{equation}
It is easily seen that $m_k\in H^1_0(\Omega)$, and $\|\nabla m_k\|_{L^2(\Omega)} = 1$. Moreover, one checks that 
\begin{equation}\label{eq:Ikalph}
I_k(\alpha) \coloneqq \int_\Omega \exp\left(\alpha\, m_k(x)^2\right)\dd x  = \pi k^{\frac{\alpha}{2\pi}-2} + \int_{\{k^{-1}\leq|x|<1\}} \exp\left(\alpha\, m_k(x)^2\right)\dd x + |\Omega|-\pi,
\end{equation}
and for $\alpha>0$ with $\alpha\neq 4 \pi$ and for $k\geq2$,
\[
\int_{\{k^{-1}\leq|x|<1\}} \exp\left(\alpha\, m_k(x)^2\right)\dd x \leq \frac{4\pi^2}{4\pi-\alpha}\left(1-k^{\frac{\alpha}{2\pi}-2}\right)\,.
\]
In turn, as $k\to\infty$, $I_k(\alpha) \asymp  k^{\frac{\alpha}{2\pi}-2}$ if $\alpha>4\pi$ and $I_k(\alpha)= O(1)$ if $\alpha<4\pi$. In particular, \eqref{eq:trudinger} does not hold for $\alpha>4\pi$. It is shown in \cite{Moser1971} that $4\pi$ is the best constant, that is \eqref{eq:trudinger} is satisfied for all $\alpha\in[0,4\pi]$. 

We now investigate the $4\pi$ threshold numerically. Given a mesh $\M$ of the polyhedral domain $\Omega$, we define the discrete counterpart of \eqref{eq:Ikalph}, that is 
\begin{equation}\label{eq:IkalphDiscr}
I_k^\M(\alpha) =  \sum_{K\in\T} |K| \exp\left(\alpha\, m_k(x_K)^2\right).
\end{equation}
For different meshes with decreasing mesh size, and different values of $k$ and $\alpha$ we compute $I_k^\M(\alpha)$ and report the results in Figure~\ref{fig:Moser}. More precisely, $\Omega$ is the square $[-1,1]^2$ that is discretized with $4$ regular triangular meshes with respectively $3\,706$, $23\,248$, $257\,012$ and $4\,107\,882$ cells. The values of $k$ are chosen as $10$, $30$, $100$, $300$ and $1\,000$. Concerning $\alpha$ we take the sub-critical value $2\pi$, the critical value $4\pi$ and the super-critical values $4.5\pi$ and $5\pi$. For each value of $\alpha$, $I_k^\M(\alpha)$ is computed with respect to $k$ for the different meshes. As $k\to \infty$ there may be no point $x_K$ in the ball $\{|x|\leq 1/k\}$ for the coarser meshes. In this case, we consider that the integral is not ``resolved'', indicated by hollow markers on the plot. We observe for $I_k^\M(\alpha)$ the expected behavior of \eqref{eq:Ikalph} at the continuous level: it diverges with the expected rate $O(k^{\frac{\alpha}{2\pi}-2})$ for super-critical values of $\alpha$ as long as the integral is resolved, while it appears uniformly bounded for the (sub-)critical values of $\alpha$. 
\begin{figure}
\includegraphics[width=\textwidth]{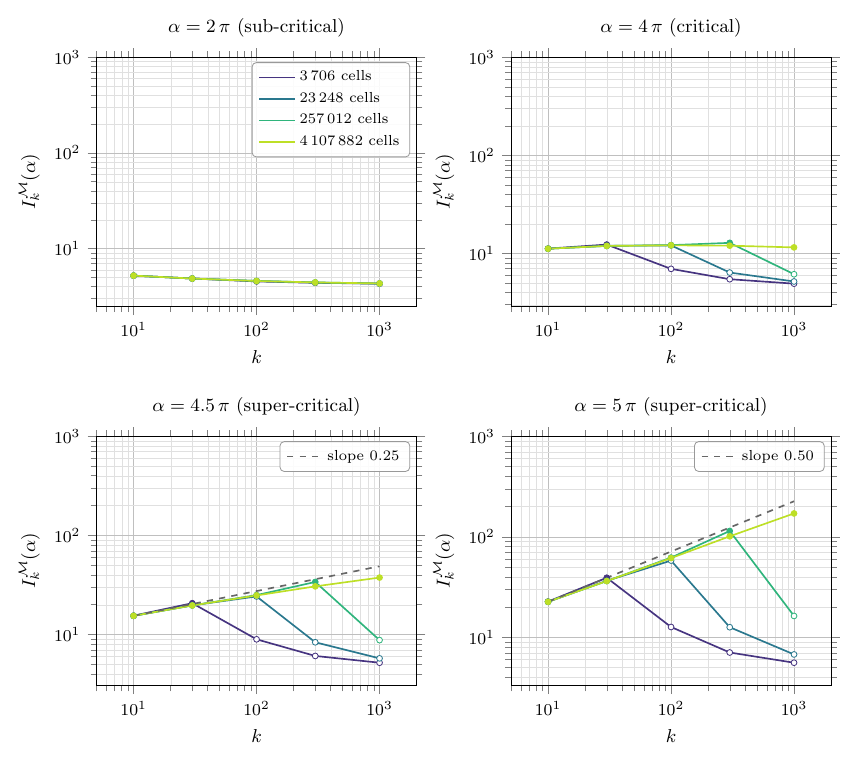}
\caption{Evolution of $I_k^\M(\alpha)$ as a function of $k$ for different values of $\alpha$ and different mesh sizes. Color-filled points indicate that the integral is resolved, that is, the ball $B(0,1/k)$ contains at least one cell center $x_K$.}
\label{fig:Moser}\end{figure}
It could be conjectured, at least for sufficiently regular meshes, that the discrete Trudinger inequality \eqref{eq:MT5} with $\Gamma_0=\partial\Omega$ holds under the same range of $\alpha$ as in the continuous setting, that is $\alpha\leq d|\mathbb{S}^{d-1}|^{1/(d-1)}$ where $\mathbb{S}^{d-1}$ is the unit sphere in dimension $d$. Let us mention that under different boundary conditions one expects different optimal constants in general (see \cite{ChangYang88}).

\subsection{Defect in discrete Sobolev non-embedding}\label{sec:defect}

The refined asymptotics of Sobolev constants given by Theorem~\ref{thm:Sobembedding} allow us to precisely quantify the blow-ups of the discrete embedding of $W^{1,d}(\Omega)$ into $L^\infty(\Omega)$ when the size of the mesh vanishes. 
This is illustrated in the following proposition.
\begin{proposition}\label{prop:Sobnonembed}
Let $\M$ be a mesh in the sense of {\normalfont \ref{HM a partition}--\ref{HM e cell centers}} satisfying the regularity assumptions {\normalfont \ref{HR a volumes}--\ref{HR b faces}}, and with mesh size $h< C_d \, \mu^{-1}$, where $C_d$ only depends on $d$. Then, there holds
\begin{equation}\label{eq:Sobdefect}
\|u\|_{0,\infty}\leq C |\ln(\mu h)|^{1-\frac1d}\|u\|_{1,d},
\end{equation}
for all $u\in\R^\T$ and some constant $C>0$ depending only on $\Omega$, $\lambda$ and $\mu$.
\end{proposition}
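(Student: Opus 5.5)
The plan is to combine an inverse (norm-equivalence) inequality on piecewise constant functions with the $q$-asymptotics of Theorem~\ref{thm:Sobembedding} in Case B, and then optimize in $q$.

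\emph{Step 1: inverse inequality.} Fix $q\in[d,\infty)$ and let $K_u$ be a cell with $|u_{K_u}|=\|u\|_{0,\infty}$. By~\ref{HR a volumes}, $|K_u|\ge |B(x_{K_u},\mu h)|=\Bd\mu^d h^d$. Hence
\[
\|u\|_{0,\infty}=|K_u|^{-1/q}\bigl(|K_u||u_{K_u}|^q\bigr)^{1/q}\le (\Bd\mu^d h^d)^{-1/q}\|u\|_{0,q}.
\]

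\emph{Step 2: Sobolev with explicit constant.} Apply Theorem~\ref{thm:Sobembedding} with $p=d$ (Case B), in the precise form \eqref{eq:Csob} from Proposition~\ref{prop:preciseembedding}. When $p=d$ we have $\frac1d+\frac1q-\frac1p=\frac1q>0$, so $\Lambda_{d,q}=(q+1-q/d)^{1-\frac1d+\frac1q}$. This gives
\[
C^{\rm Sob}_{d,q}\le C\,\diam(\Omega)^{d/q}(q+1)^{1-\frac1d+\frac1q}\le C'\,q^{1-\frac1d},
\]
with $C'$ depending only on $\Omega$ and $\lambda$. Here I use that $q^{1/q}\le e^{1/e}$ and that $\diam(\Omega)^{d/q}\le\max(1,\diam(\Omega))$ uniformly in $q\ge d$. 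Combining with Step 1,
\[
\|u\|_{0,\infty}\le C'\,\Bd^{-1/q}\,(\mu h)^{-d/q}\,q^{1-\frac1d}\,\|u\|_{1,d}.
\]

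\emph{Step 3: choice of $q$.} The main point is to balance $(\mu h)^{-d/q}$ against $q^{1-1/d}$. I choose $q=d|\ln(\mu h)|$. This makes $(\mu h)^{-d/q}=\exp\bigl(d|\ln(\mu h)|/q\bigr)=e$, while $q^{1-1/d}=d^{1-1/d}|\ln(\mu h)|^{1-1/d}$. The factor $\Bd^{-1/q}$ is bounded by $\max(1,\Bd^{-1/d})$. For this choice to be admissible we need $q\ge d$, i.e. $|\ln(\mu h)|\ge1$. This is guaranteed by requiring $\mu h\le e^{-1}$, which is where the smallness assumption on the mesh size in the statement is used (with $C_d$ taken accordingly, e.g.\ $C_d=e^{-1}$).

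I expect no serious obstacle beyond bookkeeping. The only care needed is to check that the constant from \eqref{eq:Csob} is uniform in $q$, apart from the explicit $\Lambda_{d,q}$ factor and the harmless $\diam(\Omega)^{d/q}$. The proof then yields \eqref{eq:Sobdefect} with $C$ depending on $\Omega$ and $\lambda$, and on $\mu$ only through the logarithm.
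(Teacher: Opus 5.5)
Your proposal is correct and follows the paper's proof: you use the inverse estimate $\|u\|_{0,\infty}\le |K_0|^{-1/q}\|u\|_{0,q}$ with $|K_0|\ge \Bd(\mu h)^d$, then the Case B bound $C^{\rm Sob}_{d,q}\lesssim q^{1-1/d}$ coming from \eqref{eq:Csob}, and finally optimize in $q$. Your explicit choice $q=d|\ln(\mu h)|$, admissible once $\mu h\le e^{-1}$, just makes precise the paper's ``optimizing in $q$'' step, and it also shows that the constant in front does not actually need to depend on $\mu$.
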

\begin{proof}
Let $K_0\in \T$ such that $\|u\|_{0,\infty} = |u_{K_0}|$ and $q\in[d,\infty)$. Then, with $C>0$ depending only on $\Omega$, $\lambda$ and $\mu$ and changing from an inequality to the next,
\[
\|u\|_{0,\infty}\leq |K_0|^{-\frac1q}\|u\|_{0,q}\leq C\frac{C^{\rm Sob}_{d,q}}{(\mu h)^{\frac{d}{q}}}\|u\|_{1,d}\leq C\frac{q^{1-\frac1d}}{(\mu h)^{\frac{d}{q}}}\|u\|_{1,d},
\]
where we used that $B(x_{K_0},\mu h)\subset K_0$ and Theorem~\ref{thm:Sobembedding}. By optimizing in $q$ one finds the result.
\end{proof}
We point out that the rate $ |\ln(\mu h)|^{1-\frac1d}$  improves $ |\ln(\mu h)|$ that would be found using previously known results such as \cite{BCCHF}. Such an improved rate is crucial in some applications. We refer for instance to the convergence analysis of the finite volume discretization of the Gross--Pitaevskii equation \cite{chauleur25}.

\begin{appendix}

\section{Auxiliary results}\label{sec:appendix}

In this section, we collect some auxiliary results that help put in perspective our main assumptions, objects, and theorems. We start by stating consequences on a mesh $\M$ satisfying the regularity assumptions {\normalfont \ref{HR a volumes}--\ref{HR b faces}}.

\begin{lemma}\label{lem:regularity} Let $\M$ be a mesh in the sense of {\normalfont \ref{HM a partition}--\ref{HM e cell centers}} satisfying the regularity assumptions {\normalfont \ref{HR a volumes}--\ref{HR b faces}}. Then, $\M$ satisfies the following properties:\smallskip
\begin{enumerate}[label={\normalfont (CR.{\alph*})}]
    \item \label{CR distK} For all $K\in\T$ and $\sigma\in\E_K$, we have $\mu h \le \dd(x_K,{H_\sigma})\le \dd(x_K,\sigma) \le h$.\smallskip
    \item \label{CR dsig} For all $\sigma\in\Eint$, $\sigma \in\EKL$ we have $2\mu h \le \dd(x_K,H_\sigma) +  \dd(x_L,H_\sigma) \le \dd_\sigma \le 2h$,  and for all $\sigma\in\Eext$, we have $\mu h \le \dd_\sigma \le h$.\smallskip  
    \item \label{CR diamcell} For all $K\in\T$, we have $2 \mu h \le h_K \le h$.\smallskip
    \item \label{CR volcell} For all $K\in\T$, we have $\mu^d h^d \Bd \le |K| \le \Bd \left(\frac{h}2\right)^d$.\smallskip  
    \item \label{CR diamsig} For all $\sigma\in\E$, we have $ 2 \lambda h \le h_\sigma \le  h$.\smallskip
    \item \label{CR volsig} For all $\sigma\in\E$, we have $\lambda^{d-1} h^{d-1} \Bdm \le |\sigma| \le \Bdm \left(\frac{h}{2}\right)^{d-1}$.\smallskip
\end{enumerate}
Furthermore, for any $K\in\T$ the pyramids $P_{K\sigma}$ with $\sigma\in\E_K$ partition $K$ in the sense \eqref{eq.partition}, and they satisfy
\smallskip
\begin{enumerate}[label={\normalfont (CR.{\alph*})}]
\setcounter{enumi}{6}
    \item \label{CR diampyr} For all $K\in\T$ and $\sigma\in\E_K$, we have $\max(\mu,2\lambda) h \le \diam(P_{K\sigma}) \le h$.
    \item \label{CR volpyr} For all $K\in\T$ and $\sigma\in\E_K$, we have $\frac{\mu \lambda^{d-1} \Bdm}{d} h^d \le |P_{K\sigma}| \le \frac{2^{1-d}\Bdm}{d} h^d$.\smallskip
\end{enumerate}
Moreover, the dual mesh $\D$ associated to $\M$ satisfies the following properties
\smallskip
\begin{enumerate}[label={\normalfont (CR.{\alph*})}]
\setcounter{enumi}{8}
    \item \label{CR diamdiam} For all $\sigma\in\Eint$, we have $2 \max(\mu,\lambda) h \le \diam(D_\sigma) \le 2 h$, and for all $\sigma\in\Eext$, we have $\max(\mu,2\lambda) h \le \diam(D_\sigma) \le h$.
    \item \label{CR voldiam} For all $\sigma\in\Eint$, we have $\frac{2\mu \lambda^{d-1} \Bdm}{d} h^d \le |D_\sigma| \le \frac{2^{2-d}\Bdm}{d} h^d$, and for all $\sigma\in\Eext$, we have $\frac{\mu \lambda^{d-1} \Bdm}{d} h^d \le |D_\sigma| \le \frac{2^{1-d}\Bdm}{d} h^d$. \smallskip
\end{enumerate}
Finally, we control the number of cells and the number of faces as:
\smallskip
\begin{enumerate}[label={\normalfont (CR.{\alph*})}]
\setcounter{enumi}{10}
    \item \label{CR cardinal K} For all $K\in\T$, we have $ \frac{d \mu^d \Bd }{2^{1-d} \Bdm} \le \# \E_K \le \frac{d \Bd }{2^d \mu \lambda^{d-1} \Bdm}$.
    \item \label{CR cardinal} $\frac{|\Omega|}{\Bd} 2^d h^{-d} \le \# \T \le \frac{|\Omega|}{\Bd} \frac1{\mu^d} h^{-d}$ and $ \frac{d |\Omega|}{2^{2-d} \Bdm} h^{-d} \le \# \E \le \frac{d |\Omega|}{\mu \lambda^{d-1} \Bdm} h^{-d}$.
\end{enumerate}
\end{lemma}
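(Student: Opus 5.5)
The plan is to derive all items from elementary geometry: star-shapedness of each cell $K$ with respect to $B(x_K,\mu h)$, the inclusion $K\subset\overline{\mathrm{Conv}(K)}$ with diameter at most $h$, the isodiametric inequality, and the lower bound on $|\sigma|$ in \ref{HR b faces}.

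First, for \ref{CR distK}: since $B(x_K,\mu h)\subset K$ and $\sigma\subset\partial K$, the hyperplane $H_\sigma$ cannot cut this ball. Indeed, star-shapedness with respect to the ball forces $K$ to lie on one side of $H_\sigma$ near $\sigma$, and the cone from $\sigma$ to the ball would otherwise cross $\sigma$. This gives $\dd(x_K,H_\sigma)\ge\mu h$. The remaining inequalities $\dd(x_K,H_\sigma)\le\dd(x_K,\sigma)\le h_K\le h$ are immediate. Then \ref{CR dsig} follows, because for $\sigma\in\EKL$ the points $x_K$ and $x_L$ lie on opposite sides of $H_\sigma$, so $|x_K-x_L|\ge\dd(x_K,H_\sigma)+\dd(x_L,H_\sigma)$; the triangle inequality through a point of $\sigma$ gives the upper bound $2h$. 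Next, \ref{CR diamcell} and \ref{CR volcell} come from $B(x_K,\mu h)\subset K$ together with the isodiametric inequality $|K|\le\Bd(h_K/2)^d$. For \ref{CR diamsig} and \ref{CR volsig}, apply the $(d-1)$-dimensional isodiametric inequality in $H_\sigma$ to get $|\sigma|\le\Bdm(h_\sigma/2)^{d-1}$. Combining this with \ref{HR b faces} gives $h_\sigma\ge2\lambda h$, and $h_\sigma\le h_K\le h$ holds since $\sigma\subset\partial K$.

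For the pyramids, the partition property \eqref{eq.partition} uses star-shapedness with respect to $x_K$: each ray from $x_K$ leaves $K$ exactly once, through a unique face for almost every direction. The volume formula $|P_{K\sigma}|=\frac1d|\sigma|\,\dd(x_K,H_\sigma)$, combined with \ref{CR distK} and \ref{CR volsig}, gives \ref{CR volpyr}. The diameter bounds \ref{CR diampyr} follow from $P_{K\sigma}\subset\overline{\mathrm{Conv}(K)}$, and from $\sigma\subset \overline{P_{K\sigma}}$ together with the segment from $x_K$ to $H_\sigma$. The diamond properties \ref{CR diamdiam} and \ref{CR voldiam} are then obtained by adding the contributions of the two pyramids for interior faces, and by copying the pyramid bounds for exterior faces. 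For the diameter lower bound of $D_\sigma$, use $|x_K-x_L|\ge2\mu h$ together with $h_\sigma\ge2\lambda h$.

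Finally, the counting estimates come from volume comparisons. For \ref{CR cardinal K}, sum $|P_{K\sigma}|$ over $\sigma\in\E_K$ to get $|K|$, and use the two-sided bounds on $|P_{K\sigma}|$ and $|K|$. For \ref{CR cardinal}, $\sum_K|K|=|\Omega|$ with the bounds \ref{CR volcell} gives the bounds on $\#\T$, and $\sum_\sigma|D_\sigma|=|\Omega|$ with \ref{CR voldiam} gives those on $\#\E$. Some of the stated constants (e.g.\ the factor $2^{d}$ versus $2^{-d}$) may need adjusting when each chain is written out carefully.

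The main obstacle will be the geometric step $\dd(x_K,H_\sigma)\ge\mu h$ for non-convex star-shaped polyhedra. I would argue by contradiction. If $H_\sigma$ met $B(x_K,\mu h)$, then for a point $s\in\sigma$ the open cone $\mathrm{Conv}(\{s\}\cup B(x_K,\mu h))\setminus\{s\}$, which lies in $K$ by star-shapedness, would contain points on both sides of $H_\sigma$ arbitrarily close to $s$. This contradicts the fact that $\sigma$ is a boundary face with $K$ locally on one side.
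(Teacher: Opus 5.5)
Your proposal is correct and follows essentially the same route as the paper. Every item comes from star-shapedness with respect to $B(x_K,\mu h)$, the isodiametric inequality in dimensions $d$ and $d-1$, the cone volume formula $|P_{K\sigma}|=\dd(x_K,H_\sigma)|\sigma|/d$, and volume summation over the primal and dual partitions.

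Your closing hedge is unnecessary: these chains reproduce the stated constants exactly, e.g.\ $\#\T\ge|\Omega|/\max_{K}|K|\ge 2^d|\Omega|\,\Bd^{-1}h^{-d}$. The step $\dd(x_K,H_\sigma)\ge\mu h$ also closes more directly than by invoking ``$K$ locally on one side'': for $y\in B(x_K,\mu h)\cap H_\sigma$ and $s\in\sigma$, the segment $(s,y]$ lies in $K$ and, near $s$, in the relatively open face $\sigma$, which contradicts $\sigma\subset\partial K$ with $K$ open.
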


\begin{proof}
    First, let us observe that $H_\sigma\cap B(x_K,\mu h) = \emptyset$ by star-shapedness of $K$ with respect to this ball \ref{HR a volumes}. This implies the first inequality of~\emph{\ref{CR distK}}; the other inequalities of~\emph{\ref{CR distK}} are immediate by definition of $h$ in~\eqref{def diameter}. The second inequality of~\emph{\ref{CR dsig}} is also a consequence of~\ref{HR a volumes}; the third inequality is a consequence of the triangle inequality and the definition of $h$, and the other inequalities of~\emph{\ref{CR dsig}} derive from~\emph{\ref{CR distK}}.
    The inequalities~\emph{\ref{CR diamcell}} come from~\ref{HR a volumes} and the definition of $h$.
    Then, the inequalities~\emph{\ref{CR volcell}} are consequences of~\ref{HR a volumes} (left inequality) and the isodiametric inequality $2^{d}|K| \le \Bd\, h_K^{d}$ (right inequality). The left inequality in~\emph{\ref{CR diamsig}} is a consequence of~\ref{HR b faces} and the isodiametric inequality $2^{d-1}|\sigma| \le \Bdm\, h_{\sigma}^{d-1}$, and the right inequality is a consequence of the definition of $h$. The left inequality in~\emph{\ref{CR volsig}} is~\ref{HR b faces} and the right inequality is a consequence of the isodiametric inequality and the definition of $h$.

   Then, since any cell $K$ is star-shaped with respect to its center by~\ref{HR a volumes}, the pyramids $P_{K\sigma}$ for $\sigma\in \E_K$ are disjoint. Since their closures cover $\overline{K}$ by boundedness of $K$, this gives the partition. Then, since the pyramid $P_{K\sigma}$ is included in the cell $K$, we have $\diam(P_{K\sigma})\le h$, which gives the right-inequality in~\emph{\ref{CR diampyr}}. The left inequality is a consequence of the definition of the pyramid and the fact that $\dd(\sigma,x_K)\ge \mu h$ in \emph{\ref{CR distK}} and the left inequality in \emph{\ref{CR diamsig}}. For~\emph{\ref{CR volpyr}}, we compute the volume of the pyramid $P_{K\sigma}$ as
\begin{equation*}
    \left|P_{K\sigma}\right| = \frac{\dd(x_K,H_\sigma) |\sigma|}{d},
\end{equation*}
that we estimate with~\emph{\ref{CR distK}} and \emph{\ref{CR volsig}}. The inequalities in~\emph{\ref{CR diamdiam}} and~\emph{\ref{CR voldiam}} are obtained similarly, using in addition the lower bound on $\dd_\sigma$ from~\emph{\ref{CR dsig}} for $\Eint$ and the fact that the pyramids are disjoint.

Finally, using again that the pyramids $P_{K\sigma}$ for $\sigma\in\E_K$ form a partition of the cell $K$, we have $|K|=\sum_{\sigma \in\E_K} |P_{K\sigma}|$, so that combining~\emph{\ref{CR volpyr}} and~\emph{\ref{CR volcell}} gives~\emph{\ref{CR cardinal K}}. Using similarly that both the mesh and the dual mesh give a partition of the domain, we have that
\begin{equation*}
     \sum_{K\in\T} |K| = |\Omega| = \sum_{\sigma \in\E} |D_\sigma|,
\end{equation*}
so that using the volume bounds of~\emph{\ref{CR volcell}} and~\emph{\ref{CR voldiam}} gives~\emph{\ref{CR cardinal}}.
\end{proof}

We furthermore discuss the definitions of our discrete gradient and Sobolev seminorm. First, we have the following consistency property.

\begin{lemma}[Weak consistency of discrete gradients]\label{lem.weakconsist} Let $(\M_n)_{n\in\N}$ be a sequence of meshes in the sense of {\normalfont \ref{HM a partition}--\ref{HM e cell centers}} with mesh size $(h_n)_{n\in\N}$. Then for any $u_n\in\R^{\T_n}$, $\varphi\in \left(\mathcal{C}_c^1(\Omega)\right)^d$ and any $1\leq p\leq \infty$
\begin{equation}
\left|\int_\Omega \nabla^{\M_n}u_n\cdot\varphi + (u_n^{\M_n})\nabla\cdot\varphi\right|\leq h_n|\Omega|^{1-\frac1p}\|\nabla\varphi\|_{\mathcal{C}(\overline{\Omega})}|u_n|_{1,p}.
\end{equation}
In particular, if $\lim_{n\to\infty} h_n = 0$ and $\sup_{n\in\N}|u_n|_{1,p}<+\infty$ then the left-hand side tends to $0$ as $n\to\infty$. 
\end{lemma}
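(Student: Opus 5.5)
The plan is a discrete integration by parts: rewrite $\int_\Omega u_n^{\M_n}\nabla\cdot\varphi$ as a sum of face terms, and compare it face by face with $\int_\Omega \nabla^{\M_n}u_n\cdot\varphi$. Write $u=u_n$ and $\M=\M_n$ for brevity.

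First, the divergence theorem on each cell $K$ gives
\[
\int_\Omega u^\M\nabla\cdot\varphi=\sum_{K\in\T}u_K\sum_{\sigma\in\E_K}\int_\sigma\varphi\cdot\nKs .
\]
Since $\varphi$ has compact support, the exterior faces contribute nothing. Each interior face $\sigma\in\EKL$ appears twice, with $n_{L\sigma}=-\nKs$, so that
\[
\int_\Omega u^\M\nabla\cdot\varphi=-\sum_{\sigma\in\Eint,\ \sigma\in\EKL}(u_L-u_K)\int_\sigma\varphi\cdot\nKs .
\]

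Second, by definition \eqref{def discrete gradient},
\[
\int_\Omega\nabla^\M u\cdot\varphi=\sum_{\sigma\in\Eint}|\sigma|(u_L-u_K)\,\frac{1}{|D_\sigma|}\int_{D_\sigma}\varphi\cdot\nKs .
\]
Adding the two identities, the left-hand side of the lemma equals
\[
\sum_{\sigma\in\Eint}|\sigma|(u_L-u_K)\Big(\langle\varphi\rangle_{D_\sigma}-\langle\varphi\rangle_\sigma\Big)\cdot\nKs ,
\]
where $\langle\cdot\rangle$ denotes the mean value. Both $\sigma$ and $D_\sigma$ lie in $\overline{K\cup L}$, and this set has diameter at most $2h_n$. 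It then seems I only get the bound $|\langle\varphi\rangle_{D_\sigma}-\langle\varphi\rangle_\sigma|\le 2h_n\|\nabla\varphi\|_{\mathcal C}$, which carries a factor $2$. To obtain the stated factor $h_n$ I would use a sharper comparison: write $\varphi(y)-\varphi(s)$ for $y\in D_\sigma$ and $s\in\sigma$, pairing points along the segments to $x_K$ and $x_L$. For $y\in P_{K\sigma}$ one has $|y-s|\le\diam(P_{K\sigma})\le h_n$, and similarly on $P_{L\sigma}$. So I would average over the pairs $(y,s)\in P_{K\sigma}\times\sigma$ and $(y,s)\in P_{L\sigma}\times\sigma$ separately, which gives $|\langle\varphi\rangle_{D_\sigma}-\langle\varphi\rangle_\sigma|\le h_n\|\nabla\varphi\|_{\mathcal C(\overline\Omega)}$. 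I expect this constant bookkeeping to be the main obstacle. If the pairing fails, I would accept a harmless factor $2$.

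Finally, with the pointwise bound in hand, write the sum as an integral:
\[
\sum_{\sigma\in\Eint}|\sigma||u_L-u_K|=\int_\Omega|\nabla^\M u| .
\]
Hence the left-hand side is at most $h_n\|\nabla\varphi\|_{\mathcal C(\overline\Omega)}\,|u|_{1,1}$. Hölder's inequality \eqref{eq:Holder} then gives $|u|_{1,1}\le|\Omega|^{1-1/p}|u|_{1,p}$, which is the claimed bound. The limit statement follows directly from this bound.
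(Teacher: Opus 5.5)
Your proposal is correct and follows essentially the same route as the paper. The divergence theorem reduces the left-hand side to $\sum_{\sigma\in\Eint}|\sigma|(u_L-u_K)\,\bigl(\langle\varphi\rangle_{D_\sigma}-\langle\varphi\rangle_\sigma\bigr)\cdot\nKs$, this face term is bounded by $h_n\|\nabla\varphi\|_{\mathcal C(\overline\Omega)}$, and H\"older's inequality concludes. Your worry about a factor $2$ is unfounded and no separate averaging is needed: every $y\in D_\sigma$ lies in $\overline{P_{K\sigma}}$ or $\overline{P_{L\sigma}}$, both contain $\sigma$ and have diameter at most $h_n$, so $|y-s|\le h_n$ holds pointwise on $D_\sigma\times\sigma$.
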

\begin{proof}
The divergence theorem shows 
\[
\int_\Omega \nabla^{\M_n}u_n\cdot\varphi + (u_n^{\M_n})\nabla\cdot\varphi = \sum \limits_{\substack{\sigma\in\Eint\\ \sigma\in\EKL}}|\sigma|\frac{u_{n,L}-u_{n,K}}{|D_\sigma|}I_\sigma(\varphi)|D_\sigma|,
\]
with, for $\sigma\in\EKL$, 
\[I_\sigma(\varphi) = \frac{1}{|D_\sigma|} \int_{D_\sigma}\varphi\cdot \nKs - \frac{1}{|\sigma|} \int_{\sigma}\varphi\cdot \nKs = \frac{1}{|D_\sigma|} \frac{1}{|\sigma|}\iint_{D_\sigma \times \sigma} \left(\varphi(x) -\varphi(s)\right) \cdot \nKs \, \dd s \, \dd x
,\]
and since $|I_\sigma(\varphi)|\leq \|\nabla\varphi\|_{\mathcal{C}(\overline{\Omega})}h_n$, the result follows by H\"older inequality.
\end{proof}
Observe that the regularity properties  {\normalfont \ref{HR a volumes}--\ref{HR b faces}} are not required in the previous lemma. Let us note however that in practice the property $\sup_{n\in\N}|u_n|_{1,p}<+\infty$ and compactness of the sequence $(u_n)_n$ will be obtained for a typical finite volume discretization under  {\normalfont \ref{HR a volumes}--\ref{HR b faces}}. Moreover, an additional orthogonality assumption on the mesh is required to ensure the consistency of fluxes for isotropic elliptic or parabolic equations. Finally, let us also recall that strong consistency of the gradient is false in general. More precisely, if $u$ is a smooth function and $\Pi^{\M_n}u$ is its constant by cell projection on the mesh, $\nabla^{\M_n}\Pi^{\M_n}u$ does not converge strongly to $\nabla u$ in most cases.

The Sobolev seminorm is sometimes defined  alternatively without introducing a discrete gradient as follows
\begin{equation*}
N_{1,p}(u) = \left\{\begin{aligned}&\left(\sum\limits_{\substack{\sigma\in\Eint\\ \sigma\in\EKL}}|\sigma|\dd_\sigma^{1-p}|u_L-u_K|^p\right)^{1/p}&& \text{for } 1\leq p< \infty, \\
&\max_{\substack{\sigma\in\Eint\\ \sigma\in\EKL}}\frac{|u_L-u_K|}{\dd_\sigma}&& \text{for } p=\infty,\end{aligned}\right.
\end{equation*}
and, in the homogeneous Dirichlet boundary condition setting,
 \begin{equation*}
N_{1,p,\Gamma_0}(u) = \left\{\begin{aligned}&\left(\sum\limits_{\substack{\sigma\in\Eint\\ \sigma\in\EKL}}|\sigma|\dd_\sigma^{1-p}|u_L-u_K|^p + \sum\limits_{\substack{\sigma\in\Eextz\\ \sigma\in\E_K}}|\sigma|\dd_\sigma^{1-p}|u_K|^p\right)^{1/p}&& \text{for } 1\leq p< \infty, \\
&\max\left(\max_{\substack{\sigma\in\Eint\\ \sigma\in\EKL}}\frac{|u_L-u_K|}{\dd_\sigma}, \max_{\substack{\sigma\in\Eextz\\ \sigma\in\E_K}}\frac{|u_K|}{\dd_\sigma}\right)&& \text{for } p=\infty.\end{aligned}\right.
\end{equation*}

We refer, for instance, to related works~\cite{Herbin95,EGH00,GlitzkyGriepentrog10,BCCHF}. Usual estimates for  TPFA finite volume discretizations involve the quantities $N_{1,p}$ rather than $|\cdot|_{1,p}$. The following lemma shows the equivalence of the two definitions on regular meshes.

\begin{lemma}[Equivalent seminorms]\label{lem:equivseminorm}
 Let $\M$ be a mesh in the sense of {\normalfont \ref{HM a partition}--\ref{HM e cell centers}} satisfying the regularity assumption {\normalfont \ref{HR a volumes}} and let $\Gamma_0\subset \partial \Omega$ be a portion of boundary such that $\Gamma_0=\cup_{\sigma \in \Eextz} \sigma$ for some $\Eextz\subset \Eext$. Then, for any $u\in\R^{\T}$ and $1\leq p\leq \infty$
\[
d^{1-\frac1p} N_{1,p}(u) \leq |u|_{1,p}\leq \left(\frac{d}{\mu}\right)^{1-\frac1p}  N_{1,p}(u),
\]
and
\[
d^{1-\frac1p} N_{1,p,\Gamma_0}(u) \leq |u|_{1,p,\Gamma_0}\leq \left(\frac{d}{\mu}\right)^{1-\frac1p}  N_{1,p,\Gamma_0}(u).
\]
The two left inequalities are equalities if the mesh satisfies additionally the orthogonality assumption: $[x_K,x_L]$ is orthogonal to $\sigma$ for all $\sigma\in\EKL$. 
\end{lemma}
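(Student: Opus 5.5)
The plan is to compare, face by face, the weights $|\sigma|^p|D_\sigma|^{1-p}$ appearing in $|u|_{1,p}$ with the weights $|\sigma|\dd_\sigma^{1-p}$ appearing in $N_{1,p}$. Since both seminorms are sums over the same faces of the same quantities $|u_L-u_K|^p$ (respectively $|u_K|^p$ for $\sigma\in\Eextz$), it suffices to prove, for every $\sigma\in\Eint\cup\Eextz$,
\[
d^{p-1}\,|\sigma|\dd_\sigma^{1-p}\;\le\;|\sigma|^p|D_\sigma|^{1-p}\;\le\;\Big(\frac{d}{\mu}\Big)^{p-1}|\sigma|\dd_\sigma^{1-p},
\]
which is equivalent to
\[
\frac{\mu}{d}\,|\sigma|\dd_\sigma\;\le\;|D_\sigma|\;\le\;\frac{1}{d}\,|\sigma|\dd_\sigma .
\]
Summing over faces and taking $p$-th roots then gives the claim for $p<\infty$. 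For $p=\infty$ the same two-sided bound on $|D_\sigma|$ directly compares $|\sigma|\,|u_L-u_K|/|D_\sigma|$ with $|u_L-u_K|/\dd_\sigma$, with exponents $1-1/p=1$.

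To get the volume comparison, I use the pyramid formula already employed in the proof of Lemma~\ref{lem:regularity}: $|P_{K\sigma}|=\dd(x_K,H_\sigma)|\sigma|/d$. For $\sigma\in\EKL$ this gives
\[
|D_\sigma|=\frac{|\sigma|}{d}\big(\dd(x_K,H_\sigma)+\dd(x_L,H_\sigma)\big).
\]
Here the two pyramids are disjoint because they lie in $K$ and $L$ respectively.

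\emph{Upper bound.} Since $x_K$ and $x_L$ lie on opposite sides of $H_\sigma$ (each cell is star-shaped with respect to its center and lies on its own side of $\sigma$, locally), the segment $[x_K,x_L]$ crosses $H_\sigma$. Hence
\[
\dd(x_K,H_\sigma)+\dd(x_L,H_\sigma)=|(x_L-x_K)\cdot n_{K\sigma}|\le|x_K-x_L|=\dd_\sigma,
\]
which gives $|D_\sigma|\le|\sigma|\dd_\sigma/d$. Equality holds exactly when $[x_K,x_L]$ is orthogonal to $\sigma$, which yields the equality statement for the left inequalities. For $\sigma\in\Eextz$ we have $\dd_\sigma=\dd(x_K,H_\sigma)$ by definition, so $|D_\sigma|=|\sigma|\dd_\sigma/d$ with equality.

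\emph{Lower bound.} By \ref{CR distK} and \ref{CR dsig},
\[
\dd(x_K,H_\sigma)+\dd(x_L,H_\sigma)\ge 2\mu h\ge\mu\,\dd_\sigma,
\]
using $\dd_\sigma\le 2h$. For exterior faces, $\dd_\sigma=\dd(x_K,H_\sigma)\ge\mu\dd_\sigma$ holds trivially. This gives $|D_\sigma|\ge\mu|\sigma|\dd_\sigma/d$.

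The only delicate point is justifying that $x_K$ and $x_L$ lie on opposite sides of $H_\sigma$, so that the normal components add up to the projection of $x_L-x_K$. I would argue this from \ref{HR a volumes}. The open segment from $x_K$ to any point of $\sigma$ lies in $K$, and $K$ does not meet $H_\sigma$ near $\sigma$, while $L$ lies on the opposite side near $\sigma$. The ball $B(x_K,\mu h)\subset K$ is disjoint from $H_\sigma$ by \ref{CR distK}, so $x_K$ lies strictly on the side of $H_\sigma$ where $K$ sits near $\sigma$, and similarly for $x_L$.

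Note that only the lower bound uses \ref{HR a volumes} quantitatively; the upper bound needs only this geometric fact.
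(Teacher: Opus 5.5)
Your proposal is correct and follows essentially the paper's argument. Both proofs reduce to the face-wise bound $\frac{\mu}{d}|\sigma|\dd_\sigma\le|D_\sigma|\le\frac1d|\sigma|\dd_\sigma$, obtained from the pyramid volume formula and (CR.a)--(CR.b), with exterior faces giving equality. You go slightly further than the paper in two places: you justify explicitly that $x_K$ and $x_L$ lie on opposite sides of $H_\sigma$, which the paper simply cites as the second inequality of (CR.b), and you spell out why orthogonality turns the left inequalities into equalities, which the paper leaves implicit.
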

\begin{proof}
For $K,L\in\M$ and $\sigma\in\EKL$, we have that
\[
\mu \dd_\sigma \le \dd(x_K,H_\sigma)+\dd(x_L,H_\sigma)\le \dd_\sigma,
\]
where the left inequality comes from the combination of the first and the last inequality for $\Eint$ in~\ref{CR dsig}, and the right inequality is the second inequality of~\ref{CR dsig}. Therefore,
\[
\frac{\mu}{d}|\sigma|\dd_\sigma\leq |D_\sigma|\leq \frac{1}{d}|\sigma|\dd_\sigma,
\]
which also holds straightforwardly for $\sigma\in \Eext$. The result follows.
\end{proof}

\begin{remark}\label{rk:equivseminorms} Note furthermore that for $p=1$ we have by definition $N_{1,1} = |\cdot|_{1,1}$ and $N_{1,1,\Gamma_0} = |\cdot|_{1,1,\Gamma_0}$.
\end{remark}

We conclude this section with an example of a configuration such that $x_K\in\widehat{\sigma}$ for some cell $K$ and some face $\sigma$, illustrated in Figure~\ref{fig:diamonds}.

\begin{figure}
    \centering
    \includegraphics[width=0.49\linewidth]{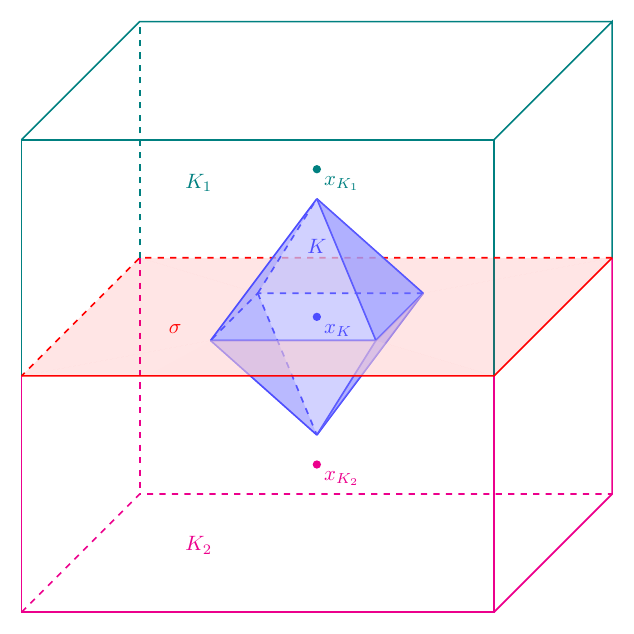}
    \caption{In this situation, $\sigma$ is a rectangle minus a smaller embedded rectangle. Its convex hull $\widehat{\sigma}$, the whole larger rectangle, contains $x_K$.}
    \label{fig:diamonds}
\end{figure}

\end{appendix}

\bigskip

\noindent{\bf Acknowledgements.} M.H. acknowledges support from the CDP C2EMPI, together with the French State under the France-2030 programme, the University of Lille, the Initiative of Excellence of the University of Lille, the European Metropolis of Lille for their funding and support of the R-CDP-24-004-C2EMPI project. The authors acknowledge support from the LabEx CIMI (ANR-11-LABX-0040) as well as the PEPS JCJC 2025 grant from INSMI (CNRS). The authors warmly thank Franck Boyer for sharing Python scripts related to mesh generation and post-processing and Simon Lemaire for many enlightening discussions on mesh regularity and high-order methods.

Claude AI (Anthropic) was used to assist in the improvement of the graphical illustrations in this paper, as well as for final proofreading.

\bibliographystyle{plain}
\bibliography{bibli}

\end{document}